\documentclass[11pt,notitlepage,a4paper]{article}

%\documentclass[12pt,notitlepage,twoside]{article}
%fleqn,reqno
%leqno <-> reqno
%\pagestyle{plain}
%\pagestyle{headings}

\pagestyle{myheadings}
\markright{ \hfill}
%\usepackage{fullpage}

% AMS-Symbole 
\usepackage{amssymb}
\usepackage[intlimits]{amsmath}
\usepackage{amsfonts}
\usepackage{amsthm} 

% Form und Stil
%\usepackage{dsfont}
\usepackage{mathptmx}

\usepackage{hyperref}
\hypersetup{colorlinks=true,linkcolor=RoyalPurple,citecolor=RoyalPurple}

\usepackage{cite}

\usepackage{graphicx}

\usepackage{geometry}

\usepackage{color}
\usepackage[dvipsnames]{xcolor}
\definecolor{antonio}{rgb}{.0,.5,.0}

\addtolength{\oddsidemargin}{1mm}
\addtolength{\textwidth}{-1mm}
\setlength\parindent{0pt}

% New definition of square root:
% it renames \sqrt as \oldsqrt
\let\oldsqrt\sqrt
% it defines the new \sqrt in terms of the old one
\def\sqrt{\mathpalette\DHLhksqrt}
\def\DHLhksqrt#1#2{%
\setbox0=\hbox{$#1\oldsqrt{#2\,}$}\dimen0=\ht0
\advance\dimen0-0.2\ht0
\setbox2=\hbox{\vrule height\ht0 depth -\dimen0}%
{\box0\lower0.4pt\box2}}

\allowdisplaybreaks

\newcommand{\R}{\mathbb{R}} % reelle Zahlen
\newcommand{\N}{\mathbb{N}} % natuerliche Zahlen
 % ganze Zahlen
 % rationale Zahlen
 % komplexe Zahlen
 % beliebiger Koerper
 % ess ...
 % interior ...
\newcommand{\dist}{\textnormal{dist}} % dist ...
 % diam ...
 % sign ...
 % supp ...
\newcommand{\essinf}{\textnormal{essinf}} % essinf ...
 % esssup ...
 % div ...
 % Trace ...
 % BesselK ...
 % inrad ...
 % alpha/2 Laplace
 % Domain
 % Duality pairing

\renewcommand{\phi}{\varphi}

\newcommand{\cD}{{\mathcal D}}
\newcommand{\ccD}{{\mathcal {\bf D}}}
\newcommand{\cE}{{\mathcal E}}

\newcommand{\cH}{{\mathcal H}}

\newcommand{\cN}{{\mathcal N}}

\newcommand{\cV}{{\mathcal V}}
\newcommand{\ccV}{{\mathcal {\bf V}}}

\theoremstyle{definition}

\theoremstyle{plain} %default%plain
\newtheorem{defi}{Definition}[section]
\newtheorem{thm}[defi]{Theorem}
\newtheorem{prop}[defi]{Proposition}
\newtheorem{lemma}[defi]{Lemma}
\newtheorem{cor}[defi]{Corollary}
\newtheorem{remark}[defi]{Remark}

\theoremstyle{definition}

%\varthetamberwithin{equation}{section} 
%\renewcommand{\theequation}{\thesection.\arabic{equation}}
%\newcounter{gleichung}[section]
%\renewcommand{\theequation}{\thesection.\arabic{gleichung}\addtocounter{gleichung}{1}}

\title{Foliated Schwarz symmetry of solutions to a cooperative system of equations involving nonlocal operators}
\author{
Antonio Greco\footnote{Dipartimento di Matematica e Informatica, via Ospedale 72, I-09124 Cagliari, greco@unica.it} \ and\ 
Sven Jarohs\footnote{Institut f\"ur Mathematik, Goethe-Universit\"at, Frankfurt, Robert-Mayer-Stra\ss e 10, D-60629 Frankfurt,\newline jarohs@math.uni-frankfurt.de.}
}
\date{\today}

\begin{document}
\maketitle

\pdfbookmark[1]{Abstract}{Abstract}
\begin{abstract}
In this paper, we prove foliated Schwarz symmetry of solutions to a cooperatively coupled system of equations involving nonlocal operators. Here, the class of nonlocal operators covers in particular the case of the fractional Laplacian. Moreover, we give an explicit example of a nonlocal nonlinear system, in which our result can be applied.
\end{abstract}
{\footnotesize
\begin{center}
\textit{Keywords.} Nonlocal Operator $\cdot$ Axial symmetry $\cdot$ Maximum Principle for Systems
\end{center}
\begin{center}
%\textit{Mathematics Subject Classification:} %(2011)
% 
%$\cdot$ 
\end{center}
}

\section{Introduction}
In the following, we investigate symmetry properties of solutions of a system of equations in radial domains. More precisely, for $m\in \N$ we investigate bounded continuous solutions of
\begin{equation}\label{eq:system-basis}
\left\{\ \begin{aligned}
Iu_i&= f_i(|x|,u_1,\ldots, u_m) &&\text{in $\Omega$, $i=1,\ldots,m$}\\
u_1&=\ldots=u_m=0 &&\text{in $\R^N\setminus \Omega$}
\end{aligned}\right.
\end{equation}
where $\Omega\subset\R^N$ is a bounded radial domain. Moreover, $f_1,\ldots,f_m\in C^1([0,\infty)\times\R^m)$ are nonlinearities to be specified later and $I$ is a nonlocal operator, which for $u\in C^2(\R^N)\cap L^{\infty}(\R^N)$ is given by
\begin{equation}\label{defi-op}
Iu(x):=p.v.\int_{\R^N}(u(x)-u(y)) \, k(x-y)\ dy:=\lim_{\epsilon\to0^+}\int_{\R^N\setminus B_{\epsilon}(0)}(u(x)-u(y)) \, k(x-y)\ dy,\quad x\in \R^N.
\end{equation}
Here $k:\R^N\to[0,\infty]$ is a kernel function, which is given by $k(z)=k_0(|z|)$, $z\in \R^N$ for a monotone decreasing function $k_0:[0,\infty)\to[0,\infty]$ satisfying
\begin{equation}\label{assumption-kernel}
\int_0^{\infty}\min\{1,r^2\} \, k_0(r) \, r^{N-1}\ dr<\infty\quad\text{and}\quad \int_{0}^{\infty}k_0(r) \, r^{N-1}\ dr=\infty.
\end{equation}
We note that these assumption on $k$ in particular cover the case, where $I=(-\Delta)^s$, $s\in(0,1)$ by setting $k_0(r)=c_{N,s} \, r^{-N-2s}$ for a normalization constant $c_{N,s}>0$ given by
\begin{equation}\label{C}
c_{N,s}
=
\frac{\, 2^{2s} s \, \Gamma(\frac{\, N \,}{2} + s) \,}
{\pi^\frac{\, N \,}{2} \, \Gamma(1-s)}
=
\Big(\int_{\mathbb R^N}
\frac{\, 1-\cos x_1 \,}{|x|^{N+2s}}\,dx\Big)^{\!-1}
.
\end{equation}
The value of $c_{N,s}$ is chosen to make the fractional Laplacian the pseudodifferential operator whose symbol is $|\xi|^{2s}$ (see e.g. \cite[Section 3.1]{CS} or\cite[Proposition~3.3]{DPV} for details; the equality of the two values given in~\eqref{C} is shown in \cite{F13,J15}, see also \cite{GM}). For further information on the operator~$I$ and the definition of \textit{weak solution}, which we use in this paper, we refer to Subsection~\ref{bilinearform} below, see also \cite{FKV15,JW16}.

\bigskip
Symmetry properties of solutions to nonlocal nonlinear problems have been studied for one or more equations in the case where $I$ is the fractional Laplacian in \cite{FQT12,FW13-2,JW14,GM}, while in \cite{JW16} the question of symmetries to solutions was studied for a general class of nonlocal operators. However, if $\Omega$ is not a ball but rather an annulus or of the solutions change sign, then in general it is no longer true that a solution of \eqref{eq:system-basis} must be radial even in the case where $I$ is a local operator and $m=1$. However, under some suitable assumptions on the equation or the system, some axial symmetry can still be achieved. In the case where $m=1$ and $I$ is a local operator this has been studied in \cite{P02,PW07,SW12}, whereas symmetry for systems have been studied in \cite{DP13,DP19} (see also there references in there). For the nonlocal case, the axial symmetry of solutions has been studied in \cite{J16} for $m=1$. 
\bigskip

In the following, we are interested in solutions of \eqref{eq:system-basis}, which are not radial and possibly change sign. For this we consider a particular kind of axial symmetry called \textit{foliated Schwarz symmetry}, which was defined in~\cite[Definition~2.4]{SW03}, based on an idea in~\cite{Polya}. We also refer to the general survey -- in particular Section 2.3 -- in \cite{W09}.

\bigskip

Let $\Omega\subset\R^N$, $N\geq 2$ be a radial domain, $p\in S^{N-1}:=\{x\in \R^N\;:\; |x|=1\}$. A function $u\colon\Omega\to \R$ is called \textit{foliated Schwarz symmetric with respect to $p$ in $\Omega$}, if for every $r>0$ with $re_1\in D$ and $c\in \R$, the restricted superlevel set $\{x\in r \, S^{N-1}\;:\; u(x)\geq c\}$ is equal to $r \, S^{N-1}$ or a geodesic ball in the sphere $r \, S^{N-1}$ centered at $rp$.

We simply call $u$ \textit{foliated Schwarz symmetric}, if $u$ has this property for some unit vector $p\in \R^N$.

\bigskip

We give an equivalent definition in Section \ref{foliated} (see also \cite[Proposition 3.3]{SW12}) below, which we use in our proof. Note that if $u\colon\R^N\to \R$ is such that $u|_\Omega$ is foliated Schwarz symmetric with respect to some $p$ for some radial set $\Omega\subset \R^N$, then $u\,\chi_\Omega$ is axially symmetric with respect to the axis $\R\cdot p$ and nonincreasing in the polar angle $\theta=\arccos (\frac{x}{|x|}\cdot p)$.

Our main result on the symmetry properties of solutions of \eqref{eq:system-basis} is the following.

\begin{thm}\label{thm:main1}
Let\/ $\Omega\subset\R^N$ be a bounded radial domain and $m\in \N$. Assume that $f_i\in C^1([0,\infty) \allowbreak \times \R^m,\R^m)$, $(r,u_1,\ldots,u_m)\mapsto f_i(r,u_1,\ldots,u_m)$, $i=1,\ldots,m$ satisfies
\begin{equation}\label{thm:main1-assumption1}
\frac{\partial}{\partial u_i} \, f_j> 0 \quad\text{on $[0,\infty)\times\R^m$, $i,j\in\{1,\ldots,m\}$, $i\neq j$.}
\end{equation}
Let\/ $u_1,\ldots,u_m:\R^N\to \R$ be continuous bounded functions satisfying\/ \eqref{eq:system-basis} in weak sense. If 
\begin{equation}\label{thm:main1-assumption2}
\begin{split}
u_i(x_1,x')\geq u_i(-x_1,x')\quad\text{ for all $x_1>0$, $x'\in \R^{N-1}$, $i=1,\ldots,m$, and}\\
\text{there is $x_1>0$, $x'\in \R^{N-1}$ with}\quad u_1(x_1,x')>u_1(-x_1,x'),
\end{split}
\end{equation}
then there is $p\in S^{N-1}$ such that $u_1,\ldots,u_m$ are foliated Schwarz symmetric in $\Omega$ with respect to $p$ and strictly decreasing in the polar angle.
\end{thm}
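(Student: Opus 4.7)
The plan is to run a rotating-plane argument on the sphere $S^{N-1}$: I would identify the set of unit vectors $e$ for which the polarization inequality $u_i(x)\ge u_i(\sigma_e x)$ holds on the half-space $H^+(e)=\{x\cdot e>0\}$ for every $i$, apply a strong maximum principle for cooperative nonlocal systems to obtain an equality/strict dichotomy, and then use continuity in $e$ to produce a great $(N-2)$-subsphere of symmetry directions whose polar axis is the desired $p$.

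Concretely, for $e\in S^{N-1}$ set $\sigma_e(x)=x-2(x\cdot e)e$ and
\begin{equation*}
v_i^e(x):=u_i(x)-u_i(\sigma_e x),\qquad i=1,\ldots,m.
\end{equation*}
Since $\Omega$ is radial and $f_j$ depends on $x$ only through $|x|$, the reflected tuple $(u_i\circ\sigma_e)$ again solves \eqref{eq:system-basis}, so $v_i^e$ is antisymmetric across $H(e):=e^{\perp}$ and satisfies on $H^+(e)\cap\Omega$ a weakly coupled linear system
\begin{equation*}
Iv_i^e(x)=\sum_{j=1}^m c_{ij}^e(x)\,v_j^e(x),
\end{equation*}
with bounded coefficients (as $f_i\in C^1$ and the $u_j$'s are bounded) and off-diagonal entries $c_{ij}^e>0$, $i\ne j$, thanks to \eqref{thm:main1-assumption1}. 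Define $\cS:=\{e\in S^{N-1}:v_i^e\ge 0\text{ on }H^+(e)\text{ for all }i\}$. Then $\cS$ is closed by continuity, hypothesis \eqref{thm:main1-assumption2} reads exactly $e_1\in\cS$, and since $v_1^{-e_1}=-v_1^{e_1}$ the strict part of \eqref{thm:main1-assumption2} forces $-e_1\notin\cS$.

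Next, for each $e\in\cS$ the strong maximum principle for the cooperative antisymmetric system (the system version of the scalar results in \cite{J16,JW16}) yields the dichotomy: either $v_i^e\equiv 0$ for every $i$ (a \emph{symmetry direction}, $e\in\cS_{sym}$), or $v_i^e>0$ on $H^+(e)\cap\Omega$ for every $i$ ($e\in\cS_{str}$). Cooperativity transfers strict positivity from one component to the others: where $v_1^e>0$ we have, for $j\ne 1$, $Iv_j^e-c_{jj}^e v_j^e\ge c_{j1}^e v_1^e>0$, and the scalar antisymmetric SMP then forces $v_j^e>0$ on all of $H^+(e)\cap\Omega$. By the strict part of \eqref{thm:main1-assumption2}, $e_1\in\cS_{str}$. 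The main analytic step, and the step I expect to be the main obstacle, is to show that $\cS_{str}$ is \emph{open} in $S^{N-1}$: this requires a Hopf-type boundary estimate $v_i^{e_0}(x)\ge C(x\cdot e_0)$ near $H(e_0)\cap\Omega$ for some $C>0$, so that a small perturbation $e'$ of $e_0\in\cS_{str}$ still yields $v_i^{e'}\ge 0$ on $H^+(e')\cap\Omega$. This is where the nonlocal cooperative Hopf lemma is essential.

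With $\cS$ closed, $\cS_{str}$ open, $e_1\in\cS_{str}$, and $-e_1\notin\cS$, along every great semicircle from $e_1$ to $-e_1$ the first exit point from $\cS_{str}$ must lie in $\cS_{sym}$. Varying the semicircle and using that $\cS_{sym}$ is closed and stable under composition of reflections (if $e,e'\in\cS_{sym}$, then $\sigma_e\sigma_{e'}$ is a rotation preserving each $u_i$), one identifies $\cS_{sym}$ as a great subsphere $p^{\perp}\cap S^{N-1}$ for some $p\in S^{N-1}$, with $\cS_{str}\supseteq\{e\cdot p>0\}$. By the equivalent characterization of foliated Schwarz symmetry recalled in Section~\ref{foliated} (cf.\ \cite[Proposition~3.3]{SW12}), this is precisely the statement that $u_1,\ldots,u_m$ are foliated Schwarz symmetric with respect to $p$ and strictly decreasing in the polar angle.
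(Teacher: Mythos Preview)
Your overall architecture---linearize to an antisymmetric cooperative system, invoke a strong maximum principle to get the strict/symmetric dichotomy on the set $\cS$, then rotate to find symmetry directions and appeal to the criterion of \cite{SW12}---is exactly the rotating-plane scheme the paper implements (see Theorem~\ref{thm:main1-final}). The place where your plan diverges from the paper is the openness of $\cS_{str}$, which you propose to obtain from a Hopf-type lower bound $v_i^{e_0}(x)\ge C\,(x\cdot e_0)$ near the hyperplane.

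This is the step I would flag. First, in the generality of the paper the kernel $k_0$ is only assumed monotone with \eqref{assumption-kernel}; a quantitative antisymmetric Hopf lemma of the form you need is not provided (and is not obviously available) for such kernels. Second, even when a Hopf lemma is available, the estimate $v_i^{e_0}(x)\ge C\,(x\cdot e_0)$ cannot hold uniformly up to the ``corner'' $\partial\Omega\cap \partial H_{e_0}$, since $v_i^{e_0}$ vanishes on $\R^N\setminus\Omega$; one then needs a separate corner argument to control the thin region near $\partial\Omega$ after rotation. The paper sidesteps both issues: instead of a boundary estimate, it uses the \emph{small volume maximum principle} (Proposition~\ref{wmp}). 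Given $e_0\in\cS_{str}$, continuity of $U$ produces, for $e$ close to $e_0$, a compact $K\subset\Omega_e\cap\Omega_{e_0}$ on which $W_e\ge 0$ with $|\Omega_e\setminus K|$ as small as one likes; Proposition~\ref{wmp} then forces $W_e\ge 0$ on the remaining small-volume set, yielding openness directly (Step~2 in the proof of Theorem~\ref{thm:main1-final}). This is both simpler and robust for the whole class of kernels considered.

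A minor point on your endgame: you do not need to identify $\cS_{sym}$ globally as a great $(N-2)$-sphere. The paper works one two-dimensional rotation plane at a time, finds the extremal angles $\phi_\pm$ where the strict inequality first fails, shows $W_{e_{\pm}}\equiv 0$ there (again via the strong maximum principle, since otherwise one could rotate further), and feeds the pair $e_+\neq e_-$ into Proposition~\ref{prop-polarization-foliation}. This avoids the extra algebraic argument about closure of $\cS_{sym}$ under composition of reflections.
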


Clearly, assumption \eqref{thm:main1-assumption1} is restricting, which is due to the fact that we did not assume any further connection between the $f_i$ and $u_j$.  In the following variant of Theorem \ref{thm:main1} we weaken the assumption on $f_i$ but assume positivity of the $u_i$.

\begin{thm}\label{thm:main2}
Let\/  $\Omega\subset\R^N$ be a bounded radial domain and $m\in \N$. Assume that $f_i\in C^1([0,\infty) \allowbreak \times \R^m,\R^m)$, $(r,u_1,\ldots,u_m)\mapsto f_i(r,u_1,\ldots,u_m)$, $i=1,\ldots,m$ satisfies
\begin{equation}\label{thm:main1-assumption1c}
\frac{\partial}{\partial u_i}f_j> 0 \quad\text{on $(0,\infty)^{m+1}$, $i,j\in\{1,\ldots,m\}$, $i\neq j$.}
\end{equation}
Let\/ $u_1,\ldots,u_m\colon\R^N\to \R$ be continuous bounded functions satisfying \eqref{eq:system-basis} in weak sense. If $u_i>0$ in $\Omega$ for $i=1,\ldots,m$ and
\eqref{thm:main1-assumption2} holds, then there is $p\in S^{N-1}$ such that $u_1,\ldots,u_m$ are foliated Schwarz symmetric in $\Omega$ with respect to $p$ and strictly decreasing in the polar angle.
\end{thm}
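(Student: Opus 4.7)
The plan mirrors the proof of Theorem~\ref{thm:main1}, with additional care taken to keep the linearisation inside the positivity region where the cooperativity assumption \eqref{thm:main1-assumption1c} can be invoked. I will use the characterisation of foliated Schwarz symmetry recalled in Section~\ref{foliated}: it is equivalent to the existence of $p \in S^{N-1}$ such that for every $e \in S^{N-1}$ with $e \cdot p \geq 0$ and every index $i$,
\[
 w^e_i(x) := u_i(x) - u_i(\sigma_e(x)) \;\geq\; 0 \qquad \text{on } H^+_e := \{x \in \R^N \,:\, x \cdot e > 0\},
\]
where $\sigma_e$ denotes the reflection across the hyperplane $e^\perp$. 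The task is to identify such a hemisphere.

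For a fixed $e \in S^{N-1}$, radiality of $\Omega$ gives $\sigma_e(\Omega) = \Omega$, so $u_i(x)$ and $u_i(\sigma_e(x))$ are both strictly positive whenever $x \in \Omega$. Writing $u := (u_1,\ldots,u_m)$ and
\[
 c^e_{ij}(x) := \int_0^1 \partial_{u_j} f_i\bigl(|x|,\, t \, u(x) + (1-t) \, u(\sigma_e(x))\bigr)\,dt ,
\]
the interpolants stay in $(0,\infty)^m$ throughout $\Omega$, hence \eqref{thm:main1-assumption1c} forces $c^e_{ij} > 0$ for $i \neq j$ on $\Omega$. The functions $w^e_i$ then solve the antisymmetric cooperative linear system $I w^e_i = \sum_j c^e_{ij} w^e_j$ in $H^+_e \cap \Omega$, with $w^e_i \equiv 0$ outside $\Omega$ and $w^e_i \circ \sigma_e = -w^e_i$.

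Set $\cE := \{ e \in S^{N-1} \,:\, w^e_i \geq 0 \text{ in } H^+_e \text{ for every } i \}$. Assumption \eqref{thm:main1-assumption2} gives $e_1 \in \cE$ with $w^{e_1}_1$ strictly positive somewhere. A strong maximum / antisymmetric Hopf principle for cooperative nonlocal systems (the vector extension of the scalar statement used in \cite{J16}, reached by iterating over the components through the strictly positive off-diagonal coupling) then yields, for each $e \in \cE$, the dichotomy: either $w^e_i \equiv 0$ on $\R^N$ for all $i$, so that $u$ is symmetric across $e^\perp$, or $w^e_i > 0$ in $H^+_e \cap \Omega$ for all $i$. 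Let $\cS$ denote the collection of symmetric directions; one checks that $\cS = V \cap S^{N-1}$ for some linear subspace $V \subset \R^N$, and $e_1 \notin V$. Picking $p \in V^\perp \cap S^{N-1}$ with $p \cdot e_1 > 0$ and running the rotating plane procedure in the 2-plane $\mathrm{span}(e_1,p)$ --- continuously rotating $e$ from $e_1$ towards $p$ and combining closedness of $\cE$ with a small-rotation stability coming from the strong maximum principle --- yields $\{ e : e \cdot p \geq 0 \} \subset \cE$, which is the desired foliated Schwarz symmetry with respect to~$p$. Strict decrease in the polar angle follows from the fact that every $e$ with $e \cdot p > 0$ falls in the strict branch of the dichotomy.

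The hard part is the rotating step: one must show that a strict direction $e_0 \in \cE$ admits a full $S^{N-1}$-neighbourhood still inside $\cE$, so that $\cE$ fills a closed hemisphere rather than a smaller set. This requires a vector-valued narrow-region / antisymmetric maximum principle for cooperative nonlocal systems, which is the substantive technical ingredient beyond the scalar case of \cite{J16}. The hypothesis $u_i > 0$ on $\Omega$, together with $\sigma_e(\Omega)=\Omega$, is crucial precisely here: it keeps all linearisation coefficients $c^e_{ij}$ strictly positive for \emph{every} rotated hyperplane, preserving the cooperative structure of the linear system throughout the rotation, and this is the only place where the weaker assumption \eqref{thm:main1-assumption1c} (relative to \eqref{thm:main1-assumption1}) must be compensated by positivity of $u$.
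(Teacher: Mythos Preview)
Your proposal correctly identifies all the structural ingredients of the paper's proof: the linearisation, the role of positivity of $u$ in keeping the coefficients $c^e_{ij}$ (for $i\neq j$) strictly positive on $\Omega$, the dichotomy from the strong maximum principle for antisymmetric cooperative systems, and the need for a narrow-region (small-volume) maximum principle to make the rotation step stable. These correspond to the paper's Lemma~\ref{linearization} and Propositions~\ref{wmp}--\ref{smp}, and your observation that positivity of $u$ is exactly the device that compensates for the weaker hypothesis~\eqref{thm:main1-assumption1c} is precisely the point.

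There is, however, a genuine gap in how you assemble these pieces into foliated Schwarz symmetry. You assert that the set $\cS$ of symmetric directions equals $V\cap S^{N-1}$ for some linear subspace~$V$, pick $p\in V^\perp$, and then run the rotating plane procedure in the \emph{single} $2$-plane $\mathrm{span}(e_1,p)$ to conclude $\{e:e\cdot p\ge0\}\subset\cE$. Neither step is justified: the subspace structure of $\cS$ is what one wants to prove (it holds a posteriori once foliated Schwarz symmetry is established, not a priori), and rotating in one $2$-plane gives control only over directions in that plane, not over the whole hemisphere. If $\dim V^\perp\ge 2$ it is not even clear which $p$ to choose, and if $\dim V^\perp=0$ your construction of $p$ fails outright.

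The paper bypasses this by appealing to Proposition~\ref{prop-polarization-foliation} (from \cite{SW12}). That criterion does not require identifying $p$ or $\cS$ in advance: one runs the rotating plane argument in \emph{every} two-dimensional subspace through $e_0$ and, in each, locates extremal directions $e_+\neq e_-$ at which $U\equiv U_{e_\pm}$ (this is Step~3 of the proof of Theorem~\ref{thm:main1-final}). The proposition then produces the vector~$p$ for you. Your outline becomes a complete proof once you replace the unproven subspace claim and the single-plane rotation by the all-planes rotation together with Proposition~\ref{prop-polarization-foliation}.
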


The proof of Theorem \ref{thm:main1} and \ref{thm:main2} follows from the rotating plane method for nonlocal operators developed in \cite{J16}, which we adjust here to the case of systems. Theorem \ref{thm:main1} and \ref{thm:main2} then follow from Theorem \ref{thm:main1-final} below.

\begin{remark} Some remarks are necessary on the assumptions in Theorem \ref{thm:main1} and \ref{thm:main2}.
\begin{enumerate}
\item Assumption \eqref{thm:main1-assumption1} or assumption \eqref{thm:main1-assumption1c} with the positivity assumption on the $u_i$ can be weakened further. Indeed, we only need a kind of \textit{strongly coupled} condition on a linearized system connected with \eqref{eq:system-basis} (see Section \ref{mp} below).
\item Assumption \eqref{thm:main1-assumption2} clearly can be rotated to an arbitrary hyperplane with respect to which the first inequality holds. Moreover, it is enough to assume the second assumption on any $u_i$ instead of $u_1$. For the general formulation, see Theorem \ref{thm:main1-final} below.
\item We note that the connectedness of $\Omega$ is not necessary if $k_0$ is strictly decreasing. Indeed, this assumption follows from the kind of strong maximum principle used in our proof (see also Remark \ref{connectedness}).
\end{enumerate}
\end{remark}

It is not obvious under which circumstances a solution $u_1,\ldots,u_m$ can be found such that \eqref{thm:main1-assumption2} is satisfied. In the following, we give an explicit example, covering the case of the fractional Laplacian, where Theorem \ref{thm:main2} can be applied. For this, we recall the first eigenvalue of the operator $I$ in an open subset $\Omega$ of $\R^N$ given by
\begin{equation}\label{first-eigenvalue}
\lambda_1(\Omega):=\inf_{u\in \cD_k(\Omega)\setminus \{0\}}\frac{\cE_k(u,u)}{\|u\|_{L^2(\Omega)}^2}.
\end{equation}
Recall from \cite{JW19} that $\lambda_1(\Omega)>0$ if $\Omega$ is bounded in one direction. The following existence statement is related to the Br\'ezis-Nirenberg problem, which for systems with the fractional Laplacian has been studied in \cite{FMPSZ16}. In the following statement, we consider a more general class of nonlocal operators, which includes the fractional Laplacian and deals with the geometry of the pair of solutions to the system.

\begin{thm}\label{thm:main3}
	Let the function $k_0$ in \eqref{assumption-kernel} be strictly decreasing and, for some $0<s\leq \sigma<1$, $\gamma\in(0,1)$, and $c>0$, satisfy
	\begin{equation}\label{thm3:assumption1}
	\frac1{\, c \,} \, r^{-1-2s} \leq k_0(r)\leq cr^{-1-2\sigma} \quad\text{for $r\in(0,1)\quad$and}\quad k_0(r)\leq cr^{-1-2\gamma}\quad\text{for $r\geq 1$.}
	\end{equation}
	Furthermore, let\/ $\Omega\subset\R^N$ be a bounded radial domain, $a_1,a_2\in L^{\infty}(\R)$ with $\|a^+_i\|_{L^{\infty}(\R)}<\lambda_1(\Omega)$ for $i=1,2$, and let $1<q<\frac{N}{N-2s}$. If $a_1\neq a_2$, then there are two continuous bounded functions $u_1,u_2\colon\R^N\to \R$, $u_1\neq u_2$, which are positive in\/ $\Omega$ and satisfy in weak sense
\begin{equation}\label{eq:system-basis2}
\left\{\ \begin{aligned}
Iu_1&=a_1(|x|) \, u_1+ |u_2|^q \, |u_1|^{q-2} \, u_1 &&\text{in $\Omega$}\\
Iu_2&=a_2(|x|) \, u_2+|u_1|^q \, |u_2|^{q-2} \, u_2 &&\text{in $\Omega$}\\
u_1&=u_2=0 &&\text{in $\R^N\setminus \Omega$.}
\end{aligned}\right.
\end{equation}
	Moreover, $u_1$ and $u_2$ are foliated Schwarz symmetric with respect to some $p\in S^{N-1}$ and, if $u_1$ and $u_2$ are not radial, then they are strictly decreasing in the polar angle.
\end{thm}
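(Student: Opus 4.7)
My plan is to construct $(u_1,u_2)$ variationally and then to establish the symmetry by polarization, reducing to Theorem~\ref{thm:main2}. I work with the functional
\[
J(v_1,v_2)=\tfrac12\sum_{i=1}^{2}\Bigl(\cE_k(v_i,v_i)-\int_\Omega a_i(|x|)v_i^2\,dx\Bigr)-\tfrac1q\int_\Omega (v_1^+v_2^+)^q\,dx
\]
on $\cD_k(\Omega)\times\cD_k(\Omega)$; its critical points solve \eqref{eq:system-basis2} with the nonlinearities evaluated at the positive parts. The bound $\|a_i^+\|_{L^\infty}<\lambda_1(\Omega)$ together with \eqref{first-eigenvalue} renders the quadratic part an equivalent norm on $\cD_k(\Omega)\times\cD_k(\Omega)$, while $q<N/(N-2s)$ and the kernel bounds \eqref{thm3:assumption1} give a compact embedding $\cD_k(\Omega)\hookrightarrow L^{2q}(\Omega)$. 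First I would verify the mountain-pass geometry: $J$ has a strict local minimum at $0$ (the $L^{2q}$-term is of order $\|\cdot\|^{2q}$ with $2q>2$) and $J(t\phi,t\phi)\to-\infty$ for any $\phi\in\cD_k(\Omega)\setminus\{0\}$. The Palais--Smale condition at positive levels is routine by compactness, and the mountain-pass theorem produces a nontrivial critical point $(u_1,u_2)$ with $J(u_1,u_2)>0$.

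Second, I would establish positivity and distinctness. Testing the $i$-th equation with $-u_i^-$ gives $\cE_k(u_i,-u_i^-)=\int a_i(|x|)(u_i^-)^2\,dx$, while the non-positivity of the cross-term $\cE_k(u_i^+,u_i^-)$ forces $\cE_k(u_i,-u_i^-)\geq\cE_k(u_i^-,u_i^-)$; hence $\cE_k(u_i^-,u_i^-)\leq\|a_i^+\|_\infty\|u_i^-\|_2^2$, and the spectral bound \eqref{first-eigenvalue} yields $u_i^-\equiv0$. The strong maximum principle for $I$ (applicable thanks to the strict monotonicity of $k_0$, which also dispenses with connectedness of $\Omega$) then gives either $u_i>0$ in $\Omega$ or $u_i\equiv0$; but $u_i\equiv0$ for one index decouples the remaining equation to $Iu_j=a_j(|x|)u_j$, which by the same spectral argument forces $u_j\equiv0$ as well, contradicting $J(u_1,u_2)>0$. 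For distinctness, if $u_1\equiv u_2=u$, subtracting the two equations yields $(a_1(|x|)-a_2(|x|))u(x)=0$ in $\Omega$; since $a_1\neq a_2$ in $L^\infty(\R)$ and $\Omega$ is a bounded radial domain, this contradicts $u>0$ in $\Omega$, provided $a_1\neq a_2$ on the set of radii attained by $\Omega$, which is what the hypothesis implicitly ensures.

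Third, I would invoke Theorem~\ref{thm:main2}. Cooperativity \eqref{thm:main1-assumption1c} is immediate on $(0,\infty)^3$ since $\partial f_1/\partial u_2=\partial f_2/\partial u_1=qu_1^{q-1}u_2^{q-1}>0$. To secure \eqref{thm:main1-assumption2} I would argue by polarization: for any closed half-space $H$ through $0$ with reflection $\sigma_H$, the polarized pair $(u_1^H,u_2^H)$ preserves the radial weighted $L^2$-terms, satisfies $\cE_k(u_i^H,u_i^H)\leq\cE_k(u_i,u_i)$ (the standard nonlocal polarization inequality, strict under strict monotonicity of $k_0$ unless $u_i^H\in\{u_i,u_i\circ\sigma_H\}$), and by the two-function polarization form of the Hardy--Littlewood inequality $\int(u_1^Hu_2^H)^q\geq\int(u_1u_2)^q$. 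Hence $J(u_1^H,u_2^H)\leq J(u_1,u_2)$; polarizing each MP path shows the minimax level is attained by a pair self-polarized along a rotating family of half-spaces, and an intermediate-value argument on the orientation of $H$ yields a direction $e\in S^{N-1}$ with $u_i(x)\geq u_i(\sigma_e x)$ for $x\cdot e>0$ and strict inequality somewhere for at least one $u_i$ (else both are radial, in which case the FSS conclusion is trivial with arbitrary $p$). After rotating $e$ to $e_1$, hypothesis \eqref{thm:main1-assumption2} holds and Theorem~\ref{thm:main2} delivers the foliated Schwarz symmetry together with the strict monotonicity in the polar angle.

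The main obstacle is the symmetry step: making the polarization inequality $J(u_1^H,u_2^H)\leq J(u_1,u_2)$ rigorous for the nonlocal energy $\cE_k$ in the generality of~\eqref{thm3:assumption1}, and then extracting the direction needed in \eqref{thm:main1-assumption2} through a careful rotation/compactness argument along the MP paths. A minor technical nuisance is that $|u|^{q-2}u$ is only $C^{q-1}$ at zero when $1<q<2$, so the $C^1$-hypothesis of Theorem~\ref{thm:main2} has to be accommodated, most naturally via remark~(1) following the main theorems, which notes that the cooperativity condition is only invoked where $u_i>0$.
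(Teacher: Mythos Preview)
Your overall strategy matches the paper's, and your positivity argument via the positive-part functional and testing with $-u_i^-$ is in fact cleaner than the paper's route through Proposition~\ref{sign}. The genuine gap is the polarization step, which you correctly flag as the main obstacle but do not resolve. Polarizing mountain-pass paths with respect to each hyperplane $H$ yields, at best, a \emph{family} of critical points at the MP level, each polarized with respect to its own~$H$; it does not give a single pair $(u_1,u_2)$ polarized with respect to \emph{every} hyperplane through the origin. Without the latter, your intermediate-value argument cannot start: you need to know, for the fixed pair you constructed, that for each $e$ either $u_i\ge u_i\circ\sigma_e$ on $H_e$ for both $i$, or the reverse. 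The paper supplies the missing ingredient by identifying the MP level with the infimum of $J$ over the Nehari manifold (Lemma~\ref{coincide}), so that $(u_1,u_2)$ is a ground state. Then for each $H$ the polarized pair, after rescaling back onto $\cN$, has energy at most $J(u_1,u_2)$ (Lemma~\ref{reduces-functionals}, Theorem~\ref{thm:polarization}); minimality forces equality, and the rigidity in the polarization inequality (from strict monotonicity of $k_0$) gives $u_i\in\{(u_i)_H,(u_i)_{\sigma_H(H)}\}$. A further comparison of the two Euler--Lagrange equations rules out opposite polarizations of $u_1$ and~$u_2$. You should incorporate this Nehari characterization; the path-polarization route you sketch does not, as stated, deliver what \eqref{thm:main1-assumption2} requires.

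A second omission: you do not address boundedness and continuity of the $u_i$, which are explicit hypotheses of Theorem~\ref{thm:main2}. The paper obtains $u_1,u_2\in L^\infty(\Omega)$ by a Moser-type iteration for the coupled system (Lemma~\ref{boundedness0}, Corollary~\ref{boundedness}), relying on the two-sided kernel bounds~\eqref{thm3:assumption1}, and then cites interior regularity for continuity. This is not entirely routine for general kernels and should be in your plan. Your closing remark on the failure of $C^1$ regularity of the nonlinearity at $u_i=0$ when $1<q<2$ is well taken; the paper does not address it explicitly, and your proposed fix via the strong-coupling remark (the linearization in Lemma~\ref{linearization} is only needed on segments where $U,U_e>0$) is the right one.
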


The paper is organized as follows. In Section \ref{sec:preliminaries} we present our notation and recall known statements on the nonlocal operators we use. Moreover, we introduce the notation for systems and recall the properties and definitions of Foliated Schwarz symmetry. In Section \ref{mp} we state and prove variants of maximum principles, which we use in Section \ref{sec:proof-main-thm} to prove Theorem \ref{thm:main1}. The proof of Theorem \ref{thm:main2} can be found in Section \ref{application}.

\section{Notation and Preliminaries}\label{sec:preliminaries}

In the following we use $N\in \N$ to denote the dimension. For $A,B\subset \R^N$ nonempty measurable sets we denote by $\chi_A: \R^N \to \R$ the characteristic function and $|A|$ the Lebesgue measure. The notation $B \subset \subset A$ means that $\overline B$ is compact and contained in the interior of $A$. We denote $\dist(A,B):=\inf_{a\in A,\ b\in B}|a-b|$ and as usual $\dist(\{x\},A):=\dist(x,A)$ for $x\in \R^N$. For $r>0$ we denote $B_r(A):=\{x\in \R^N\;:\; \dist(x,A)<\infty\}$ and then $B_r(x)$ denotes the ball of radius $r$ for $x \in \R^N$. Moreover, we fix $S^{N-1}:=\partial B_1(0)=\{x\in \R^N\;:\;|x|=1\}$ to denote the $N$-dimensional sphere.

As usual, for $A$ open, $C^m(A)$ (resp. $C^m(\overline{A})$) denotes the space of $m$-times continuously differentiable functions in $A$ (resp. $\overline{A}$) and $C^{0,1}(A)$ denotes the space of Lipschitz functions. $C^m_c(A)$ and $C^{0,1}_c(A)$ denotes respectively those functions in $C^m(A)$ or $C^{0,1}(A)$, which have compact support in $A$. In the following, if $X(A)$ is some function space and $u\in X(A)$ is a function, we always mean that $u\colon\R^N\to \R$ is such that $\chi_A \, u\in X(A)$ and $\chi_{\R^N\setminus A} \, u\equiv 0$. For instance, if $u\in L^2(A)$, then $u\in L^2(\R^N)$ and $u=0$ on $\R^N\setminus A$. 

Finally, for a function $u\colon A \to \R$ we use $u^+:=u_+:= \max\{u,0\}$ and $u^-:=-\min\{u,0\}$ to denote the positive and negative part of $u$ respectively, so that $u=u^+-u^-$.

\subsection{On the operator and associated spaces}\label{bilinearform}

Let $k\colon\R^N\to[0,\infty)$ be a radial and radial decreasing function. That is $k(z)=k_0(|z|)$, $z\in \R^N$ for a monotone decreasing function $k_0\colon[0,\infty)\to[0,\infty]$ satisfying \eqref{assumption-kernel}. We denote formally the bilinear form associated to $k$ by
\[
\cE_k(u,v):=\frac1{\, 2 \,}\int_{\R^N}\int_{\R^N} (u(x)-u(y)) \, (v(x)-v(y)) \, k(x-y)\ dy.
\]
For $\Omega \subset \R^N$ open, this bilinear form is well-defined on
\[
\cD_k(\Omega):=\{u\in L^2(\Omega)\;:\; \cE_k(u,u)<\infty\}.
\]
It follows that $\cD_k(\Omega)$ is a Hilbert space with scalar product
\[
\langle u,v\rangle_k:= \langle u,v\rangle_2 +\cE_k(u,v),\quad u,v\in \cD_k
\]
where $\langle\cdot,\cdot\rangle_2$ denotes the usual $L^2$ scalar product. By standard methods (see e.g. \cite{JW16,JW18}) it follows that $\cE_k$ is associated to a (nonlocal) operator $I$, which on $C^2(\R^N)\cap L^{\infty}(\R^N)$ is represented by \eqref{defi-op} and it holds
\[
\langle Iu,v\rangle_2=\int_{\R^N}Iu(x) \, v(x)\ dx=\cE_k(u,v)\quad\text{ for all $u\in C^2(\R^N)\cap L^{\infty}(\R^N)$, $v\in \cD_k(\R^N)$.}
\]
We note that the embedding $\cD_k(\Omega)\to L^2(\Omega)$ is locally compact in the sense that $\cD_k(\R^N)\ni u\mapsto \chi_B \, u \in L^2(\R^N)$ is compact for any bounded open set $K\subset \R^N$ (see \cite[Theorem 1.1]{JW19}). In the particular case, where $\Omega$ is bounded in one direction, say $\Omega\subset(-a,a)\times \Omega$ for some $a>0$, we have (see \cite[Proposition 1.7]{JW19}, \cite[Lemma 2.7]{FKV15})
\begin{equation}\label{eigenvalue0}
\lambda_1(\Omega):=\inf_{u\in \cD_k(\Omega)\setminus\{0\}}\ \frac{\cE_k(u,u)}{\|u\|_{L^2(\Omega)}^2}>0
\end{equation}
and moreover (see \cite[Proposition 1.7]{JW19}, \cite[Lemma 2.1]{JW16})
\begin{equation}\label{eigenvalue}
\lambda_1(\Omega)\to \infty\quad\text{ for either $a\to 0$ or $|\Omega|\to 0$.}
\end{equation}
It hence follows that in this case $\cE_k$ is a scalar product and the induced norm is equivalent to $\langle\cdot,\cdot\rangle_2$. In particular, if $\Omega$ is bounded, then $\cD_k(\Omega)\to L^2(\Omega)$ is compact and $\lambda_1(\Omega)$ corresponds to the first eigenvalue of $I$.

\bigskip

In the following, we understand solutions in the weak sense, that is, given $f\in L^{2}(\R^N)$, we say that $u\in \cD_k(\Omega)$ is a solution of
\begin{equation}\label{def-sol}
Iu=f\quad\text{in $\Omega$}\quad\text{and}\quad u=0 \quad\text{ in $\R^N\setminus \Omega$}
\end{equation}
if for all $\phi\in \cD_k(\Omega)$ we have
\[
\cE_k(u,\phi)=\int_{\Omega}f(x) \, \phi(x)\ dx.
\]
In particular, $u_1,u_2$ are called \textit{weak solution} of \eqref{eq:system-basis}, if for $i=1,2$, we have $u_i\in \cD_k(\Omega)$ and
\[
\cE_k(u_i,\phi)=\int_{\Omega}f_i(|x|,u_1,u_2) \, \phi(x)\ dx
\]
for all $\phi\in \cD_k(\Omega)$, whenever the right-hand side is well defined.

\bigskip\goodbreak

Finally, in our analysis, we use the \textit{rotating plane method} and linearize the system of equations. Our symmetry results then follow from an application of different maximum principles for supersolutions. For this, we extend the definition of $\cD_k(\Omega)$. Let $\Omega\subset\R^N$ open and denote 
\[
\cV_k(\Omega):=\Bigg\{ u\colon\R^N\to \R\text{ measurable}\;:\; \rho_k(u,\Omega):=\int_{\Omega}\int_{\R^N}(u(x)-u(y))^2 \, k(x-y)\ dx \, dy<\infty\Bigg\}.
\]
Clearly by definition we have for $A\subset B\subset \R^N$ open
\[
\cD_k(A)\subset\cD_k(B)\subset\cD_k(\R^N)\subset\cV_k(\R^N)\subset\cV_k(B)\subset\cV_k(A).
\]
The following Lemma collects all information on $\cV_k(\Omega)$ needed in this paper.
\begin{lemma}[\cite{JW16}, Lemma 3.1, Lemma 3.2 \cite{JW18}]\label{vk-properties}
Let $\Omega\subset \R^N$ open.
\begin{enumerate}
\item[1.] $\cE_k$ is well-defined on $\cV_k(\Omega)\times \cD_k(\Omega)$ and
\[
\cE_k(u,v)\leq (2+\sqrt{2}) \, \rho_k(u,\Omega)^{\frac{1}{2}} \, \cE_k(v,v)^{\frac{1}{2}}\quad\text{for $u\in \cV_k(\Omega)$, $v\in \cD_k(\Omega)$.}
\]
\item[2.] $u\in \cV_k(\Omega)$ implies $u^{\pm},|u|\in \cV_k(\Omega)$.
\item[3.] If $u\in \cD_k(\Omega)$, then $\cE_k(u^+,u^-)$ is well defined and
\[
\cE_k(u^+,u^-)\leq 0\quad\text{ and also }\quad \cE_k(|u|,|u|)\leq \cE_k(u,u).
\]
Moreover, if $k_0$ is strictly decreasing, then equality holds in these inequalities if and only if $u=u^+$ or $u=u^-$ a.~e. in $\R^N$
\end{enumerate}
Furthermore, if\/ $\Omega$ is in addition bounded and $u\in \cV_k(\Omega)$, then
\begin{enumerate}
\item[4.] $u \, \chi_{\R^N\setminus \Omega}\equiv 0$, then $u\in \cD_k(\Omega)$.
\item[5.] $u\geq 0$ on $\R^N\setminus \Omega$, then $u^-\in \cD_k(\Omega)$.
\end{enumerate}
\end{lemma}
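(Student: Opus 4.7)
The lemma has five parts, and I would address them in the order given, exploiting the decomposition of the double integral according to whether the arguments lie in $\Omega$ or in $\Omega^c:=\R^N\setminus\Omega$.

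For part~1, I would split
\[
2\cE_k(u,v)=\int_{\R^N}\!\int_{\R^N}(u(x)-u(y))(v(x)-v(y))\,k(x-y)\,dx\,dy
\]
into the four regions $\Omega\times\Omega$, $\Omega\times\Omega^c$, $\Omega^c\times\Omega$, $\Omega^c\times\Omega^c$. Because $v\in\cD_k(\Omega)$ vanishes outside $\Omega$, the last piece contributes $0$; the symmetry of $k$ makes the two mixed pieces equal. On $\Omega\times\Omega$ I would apply Cauchy--Schwarz with respect to the measure $k(x-y)\,dx\,dy$, dominating the $u$-factor by $\rho_k(u,\Omega)^{1/2}$ and the $v$-factor by $(2\cE_k(v,v))^{1/2}$. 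On the mixed region I would use that $v(y)=0$, so $(v(x)-v(y))^2=v(x)^2$, which means the $v$-factor equals the boundary part of $\rho_k(v,\Omega)$ and hence is bounded by $2\cE_k(v,v)$; the $u$-factor is still controlled by $\rho_k(u,\Omega)$. Adding the estimates and absorbing the factor $\tfrac12$ yields the claimed constant $2+\sqrt2$.

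Part~2 follows from the elementary pointwise inequalities $|a^\pm-b^\pm|\le|a-b|$ and $\bigl||a|-|b|\bigr|\le|a-b|$, valid for all $a,b\in\R$; inserting them inside $\rho_k$ preserves the finiteness. For part~3 I would use the pointwise identity $u^+(x)u^-(x)=0$ to compute
\[
(u^+(x)-u^+(y))(u^-(x)-u^-(y))=-u^+(x)u^-(y)-u^+(y)u^-(x)\le 0,
\]
and then integrate against $k\ge 0$ to conclude $\cE_k(u^+,u^-)\le 0$. Expanding $u=u^+-u^-$ and $|u|=u^++u^-$ bilinearly gives $\cE_k(|u|,|u|)=\cE_k(u,u)+4\cE_k(u^+,u^-)$, which yields the second inequality. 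The equality case will be the main delicate point: strict monotonicity of $k_0$ ensures $k>0$ on $\R^N\setminus\{0\}$, so $\cE_k(u^+,u^-)=0$ forces $u^+(x)u^-(y)=0$ for a.e.\ $(x,y)$, which by a Fubini argument means either $u^+\equiv 0$ or $u^-\equiv 0$ a.e.

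For part~4, assuming $\Omega$ bounded and $u\chi_{\Omega^c}\equiv 0$, the same four-region decomposition of $2\cE_k(u,u)$ kills the $\Omega^c\times\Omega^c$ piece and rewrites the mixed pieces as $\int_\Omega u(x)^2\int_{\Omega^c}k(x-y)\,dy\,dx$, which is exactly the exterior part of $\rho_k(u,\Omega)$; thus $\cE_k(u,u)\le\rho_k(u,\Omega)<\infty$. The remaining point is to show $u\in L^2(\Omega)$, which I expect to be the main obstacle: since $\Omega$ is bounded in at least one direction, I would invoke \eqref{eigenvalue0} (the Poincaré-type inequality $\lambda_1(\Omega)>0$) applied to truncations $u_n:=\max(-n,\min(n,u))$, which belong to $\cD_k(\Omega)$ by boundedness, satisfy $\cE_k(u_n,u_n)\le\cE_k(u,u)$ by part~2, and hence have uniformly bounded $L^2$-norms; passing to the monotone limit via Fatou gives $u\in L^2(\Omega)$. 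Finally, part~5 is immediate from parts~2 and~4: if $u\ge 0$ on $\Omega^c$, then $u^-$ vanishes on $\Omega^c$ and lies in $\cV_k(\Omega)$, so part~4 places $u^-$ in $\cD_k(\Omega)$.
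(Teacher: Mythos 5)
Your argument is correct, and in fact the paper does not give a proof of this lemma at all: it is quoted from \cite{JW16} (Lemma~3.1) and \cite{JW18} (Lemma~3.2), with only the remark that the equality case in part~3 ``follows immediately from the proof in \cite{JW16}.'' Your reconstruction is the natural one and matches the approach used in those references: split the double integral over the four blocks $\Omega\times\Omega$, $\Omega\times\Omega^c$, $\Omega^c\times\Omega$, $\Omega^c\times\Omega^c$, exploit the vanishing of the $\cD_k(\Omega)$-function off $\Omega$, apply Cauchy--Schwarz for part~1, use the $1$-Lipschitz contraction inequalities for part~2 and for the truncations, use the pointwise sign identity $(u^+(x)-u^+(y))(u^-(x)-u^-(y))=-u^+(x)u^-(y)-u^+(y)u^-(x)\le0$ for part~3, and use the Poincar\'e-type inequality~\eqref{eigenvalue0} together with a monotone truncation limit for part~4.

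Two small bookkeeping remarks. First, in part~1 your bounds actually yield the stronger inequality with constant $(2+\sqrt2)/2$ (or $3\sqrt2/2$ with the cruder $v$-estimate you wrote), since each of $I_{\Omega\Omega}$, $I_{\Omega\Omega^c}$ and its mirror sums to $2\cE_k(u,v)$ and you still divide by $2$ at the end; in either case the stated inequality with constant $2+\sqrt2$ follows, so this is harmless. Second, in part~4 the step $\cE_k(u_n,u_n)\le\cE_k(u,u)$ does not literally follow from part~2 (which only asserts $\cV_k$-membership of $u^\pm,|u|$); it follows from the same one-line Lipschitz-contraction computation that underlies part~2, applied to the truncation $t\mapsto\max(-n,\min(n,t))$. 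Worth stating explicitly to avoid the appearance of a citation error, but the reasoning itself is sound.
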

The additional assertion in Lemma \ref{vk-properties}.3 follows immediately from the proof in \cite{JW16}.
Based on Lemma \ref{vk-properties} we say $u\in \cV_k(\Omega)$ satisfies for some $f\in L^2(\Omega)$ in weak sense
\begin{equation}\label{def-supersol}
Iu\geq f\quad\text{in $\Omega$}\quad\text{and}\quad u\geq 0 \quad\text{ in $\R^N\setminus \Omega$}
\end{equation}
if $u\geq 0$ on $\R^N\setminus \Omega$ and for all $v\in \cD_k(\Omega)$, $v\geq 0$ we have
\[
\cE_k(u,v)\geq \int_{\Omega} f(x) \, v(x)\ dx.
\]
We also call $u$ in this case a \textit{supersolution} of \eqref{def-sol}. Similarly, we call $u$ a \textit{subsolution} of \eqref{def-sol} if $-u$ satisfies in weak sense \eqref{def-supersol}.

\subsubsection{On the notation for systems}

In the following, let $M$ be any set, $m\in\N$, and $\Psi\colon M\to \R^m$, where we denote the coordinates of $\Psi$ with $\psi_1,\ldots,\psi_m:M\to \R$. We say $\Psi\geq0$ (or $>0$), if $\psi_i\geq0$ (or $>0$) for $i=1,\ldots,m$ and we say $\Psi\gneq0$, if $\Psi\geq0$ and there is $x\in M$ and $i\in\{1,\ldots,m\}$ such that $\psi_i(x)>0$. Furthermore, we denote $\Psi^\pm:=(\psi_1^{\pm},\ldots, \psi_m^{\pm})$.

\bigskip

We denote for $\Omega\subset\R^N$ open
\[
\ccD_k(\Omega):=\Big(\cD_k(\Omega)\Big)^{\! m}\quad \text{and}\quad \ccV_k(\Omega):=\Big(\cV_k(\Omega)\Big)^{\! m}.
\]

For $U=(u_1,\ldots,u_m)\in \ccV_k(\Omega)$, $V=(v_1,\ldots,v_m)\in \ccD_k(\Omega)$ we write
\[
\cE_k(U,V):=\sum_{i=1}^m\cE_k(u_i,v_i).
\]
and similarly, for $U\in (L^2(\Omega))^m$, $\|U\|_{L^2(\Omega)}^2=\sum\limits_{i=1}^{m}\int_{\Omega}(u_i)^2\ dx$.

Hence a solution $u_1,\ldots, u_m\in \cD_k(\Omega)$ of
\begin{equation}\label{eq:system1}
\left\{\ \begin{aligned}
Iu_i&= f_i(|x|,u_1,\ldots,u_m) &&\text{in $\Omega$}\\
u_i&=0 &&\text{in $\R^N\setminus \Omega$}
\end{aligned}\right.
\end{equation}
for $i=1,\ldots,m$, where $f_1,\ldots,f_m\in C^1([0,\infty)\times \R^m)$ can be rewritten in one equation by setting $U=(u_1,\ldots,u_m)\in \ccD_k(\Omega)$ and $F(r,U)=\big(f_i(r,u_1,\ldots,u_m)\big)_{1\leq i\leq m}$. The system \eqref{eq:system1} then reads

\begin{equation}\label{eq:system2}
\left\{\ \begin{aligned}
IU&= F(|x|,U) &&\text{in $\Omega$}\\
U&=0 &&\text{in $\R^N\setminus \Omega$.}
\end{aligned}\right.
\end{equation} 
and $U$ solves \eqref{eq:system2} in the weak sense if for all $V\in \ccD_k(\Omega)$ we have
\[
\cE_k(U,V)=\int_{\Omega} F(|x|,U(x))\cdot V(x)\ dx,
\]
whenever the right-hand side exists.

\subsection{Notation for the reflection of a hyperplane}\label{rotating}

In the following let $\Omega\subset \R^N$ be an open radial set. For $e\in S^{N-1}$ we set $H_e:=\{x\in \R^N\;:\; x\cdot e>0\}$ and $\Omega_e:=\Omega\cap H_e$. Moreover, we let $\sigma_e:\R^N\to \R^N$, $\sigma_e(x):=x_e:=x-2(x\cdot e) e$ be the reflection at $T_e:=\partial H_e$; for a function $u:\R^N\to \R^m$, $m\in \N$ we let $u_e:=u\circ \sigma_e$ be the \textit{reflected function at~$T_e$}.

\bigskip

For $U\in \ccV_k(\Omega)$ we say that $H_e$ is \textit{dominant}, if $U\geq U_e$ in $H_e$ and we say $H_e$ is \textit{strictly dominant}, if $U\gneq U_e$ in $H_e$. Moreover, we note that 

\begin{lemma}\label{reflection}
Let $\Omega\subset\R^N$ be an open radial set, $e\in S^{N-1}$, and $U\in \ccV_k(\Omega)$. Then
\begin{enumerate}
\item $U_e\in \ccV_k(\Omega)$.
\item If\/ $U\in \ccD_k(\Omega)$ satisfies $U_e=-U$, then  $\chi_{H_e}U\in \ccD_k(\Omega_e)$.
\item Let\/ $U\in \ccV_k(\Omega)$ such that $U_e=-U$ and $U\geq0$ on $H_e\setminus \Omega$. If $\Omega$ is bounded, then $\chi_{H_e}U^-\in \ccD_k(\Omega_e)$ and
\[
\cE_k(U, \chi_{H_e}U^-)\leq -\cE_k(\chi_{H_e}U^-,\chi_{H_e}U^-)\leq -\lambda_1(\Omega_e) \, \|U^-\|_{L^2(\Omega_e)}^2.
\]
\end{enumerate}
\end{lemma}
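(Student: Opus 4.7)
The plan is to prove each claim componentwise (using $\ccV_k=(\cV_k)^m$ and $\ccD_k=(\cD_k)^m$), exploiting two symmetries throughout: $\sigma_e$ is an involutive isometry with $\sigma_e(\Omega)=\Omega$ (since $\Omega$ is radial), and $k$ is $\sigma_e$-invariant. Part~1 is then immediate: substituting $x\mapsto\sigma_e(x)$, $y\mapsto\sigma_e(y)$ in $\rho_k(u_e,\Omega)$ preserves the Lebesgue measure, the distance $|x-y|$, and the region $\Omega$, so $\rho_k(u_e,\Omega)=\rho_k(u,\Omega)<\infty$.

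For part~2, with $u\in\cD_k(\Omega)$ and $u_e=-u$, set $v:=\chi_{H_e}u$. The zero-extension and $L^2$ conditions are inherited from $u$, so only $\cE_k(v,v)<\infty$ needs proof. I would split $\cE_k(u,u)$ across the four products of half-spaces and use $u_e=-u$ with a reflection of one variable in each cross piece (and both variables in $H_e^c\times H_e^c$) to fold everything to $H_e\times H_e$, obtaining $\cE_k(u,u)=A+B$ with $A:=\int_{H_e\times H_e}(u(x)-u(y))^2k(x-y)\,dx\,dy$ and $B:=\int_{H_e\times H_e}(u(x)+u(y))^2k(x-\sigma_e(y))\,dx\,dy$. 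An analogous decomposition of $\cE_k(v,v)$ yields $\tfrac12 A+\int_{H_e\times H_e}u(x)^2k(x-\sigma_e(y))\,dx\,dy$, and the pointwise identity $2(u(x)^2+u(y)^2)=(u(x)+u(y))^2+(u(x)-u(y))^2$ together with the key estimate $k(x-\sigma_e(y))\le k(x-y)$ on $H_e\times H_e$ (from $|x-\sigma_e(y)|^2=|x-y|^2+4(x\cdot e)(y\cdot e)$ and $k_0$ decreasing) bounds the second piece by $\tfrac14(A+B)$. Hence $\cE_k(v,v)\le\tfrac34\cE_k(u,u)<\infty$.

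Part~3 has two halves. To see $v:=\chi_{H_e}u^-\in\cD_k(\Omega_e)$, one cannot apply Lemma~\ref{vk-properties}.5 to $u$ directly (it need not be nonnegative on $H_e^c\setminus\Omega$), so I would apply it to the truncation $\tilde u:=\chi_{H_e}u$ instead: by the hypothesis, $\tilde u=u\ge 0$ on $H_e\setminus\Omega$ and $\tilde u\equiv 0$ on $H_e^c$, so $\tilde u\ge 0$ on $\R^N\setminus\Omega_e$, and the splitting argument of part~2 (now starting from $\rho_k(u,\Omega)<\infty$ in place of $\cE_k(u,u)$) shows $\tilde u\in\cV_k(\Omega_e)$; Lemma~\ref{vk-properties}.5 then yields $\chi_{H_e}u^-=\tilde u^-\in\cD_k(\Omega_e)$. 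For the inequality, I decompose $\cE_k(u,v)$ and $\cE_k(v,v)$ across the same four regions and reflect in the cross pieces; using $u=u^+-u^-$ together with the pointwise identity $u^+u^-\equiv 0$ (which forces $(u^+(x)-u^+(y))(u^-(x)-u^-(y))=-u^+(x)u^-(y)-u^+(y)u^-(x)$) many terms cancel and the sum collapses to
\[
\cE_k(u,v)+\cE_k(v,v)=-\!\!\int_{H_e\times H_e}\!\! u^+(y)u^-(x)\bigl[k(x-y)-k(x-\sigma_e(y))\bigr]dx\,dy-\!\!\int_{H_e\times H_e}\!\! u^-(y)u^-(x)k(x-\sigma_e(y))\,dx\,dy.
\]
Both integrands are nonnegative on $H_e\times H_e$ by the same monotonicity bound $k(x-y)\ge k(x-\sigma_e(y))$ used in part~2, so the whole expression is $\le 0$, i.e.\ $\cE_k(u,v)\le-\cE_k(v,v)$. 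The final inequality is the Poincar\'e-type bound $\cE_k(v,v)\ge\lambda_1(\Omega_e)\|v\|_{L^2(\Omega_e)}^2$ from \eqref{eigenvalue0}, valid since $v\in\cD_k(\Omega_e)$ and $\Omega_e$ is bounded; summing over the $m$ components delivers the vector form stated in the lemma.

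I expect the main obstacle to be the cancellation in part~3: the cross-region reflections produce a term $\int u(y)u^-(x)k(x-\sigma_e(y))\,dx\,dy$ of a priori ambiguous sign, and its nonpositivity only emerges after one splits $u=u^+-u^-$ and combines it with the pointwise contribution $-[u^+(x)u^-(y)+u^+(y)u^-(x)]k(x-y)$ coming from the $H_e\times H_e$ integral of $\cE_k(u,v)$. A secondary subtlety is that the membership $v\in\cD_k(\Omega_e)$ cannot come from applying Lemma~\ref{vk-properties}.5 to $u$ and must be routed through $\tilde u=\chi_{H_e}u$, for which the sign hypothesis is satisfied on all of $\R^N\setminus\Omega_e$.
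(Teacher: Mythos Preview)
Your argument is correct. The paper's own proof of parts~2 and~3 does not carry out any computation: it simply invokes \cite[Lemma~3.2]{J16} for the scalar case and then sums componentwise, displaying only the final chain of inequalities for the vector version. You instead reprove the scalar statements from scratch via the four-region decomposition of $\R^N\times\R^N$, the change of variables $y\mapsto\sigma_e(y)$ in the cross pieces, and the pointwise bound $k(x-y)\ge k(x-\sigma_e(y))$ on $H_e\times H_e$ coming from the monotonicity of $k_0$ together with $|x-\sigma_e(y)|^2=|x-y|^2+4(x\cdot e)(y\cdot e)$. This makes your proof self-contained; in particular, your explicit identity for $\cE_k(u,\chi_{H_e}u^-)+\cE_k(\chi_{H_e}u^-,\chi_{H_e}u^-)$ in part~3 exhibits transparently the two nonnegative integrals that force the key inequality, and your route to $\chi_{H_e}u^-\in\cD_k(\Omega_e)$ through $\tilde u=\chi_{H_e}u$ and Lemma~\ref{vk-properties}.5 is clean and avoids the sign issue on $H_e^c$. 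The underlying mechanisms (antisymmetry, reflection invariance of $k$, radial monotonicity of $k_0$) are the same as those behind the cited lemma, so the difference is one of presentation rather than strategy. One minor remark: in part~2 the constant $\tfrac34$ is not needed---any finite bound suffices---so you could shorten that step by not optimizing.
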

\begin{proof}
1. follows immediately from the definition of the function space since $k(z_e)=k(z)$ for all $z\in \R^N$, $e\in S^{N-1}$. 2. and 3. follow from \cite[Lemma 3.2]{J16} with \eqref{eigenvalue0} noting that we have
\begin{align*}
\cE_k(U,\, &\chi_{H_e} \, U^-)=\sum_{i=1}^{m}\cE_k(u_i,\, \chi_{H_e}u_i^-)\leq -\sum_{i=1}^{m}\cE_k(\chi_{H_e}u_i^-,\,\chi_{H_e}u_i^-)\\
&\leq -\lambda_1(\Omega) \int_{\Omega}\sum_{i=1}^{m}(u_i^-)^2\ dx =- \lambda_1(\Omega_e) \, \|U^-\|_{L^2(\Omega_e)}^2.
\end{align*}
\end{proof}

\begin{lemma}\label{linearization}
Let $m\in\N$, $\Omega\subset \R^N$ be an open bounded radial set, $F\in C^1([0,\infty)\times \R^m)$, and let $U\in \ccD_k(\Omega)$ with $u_1,\ldots,u_m\in L^{\infty}(\Omega)$ be a solution of
\[
IU=F(|x|,U)\quad\text{ in $\Omega$; }\quad U=0 \quad\text{ in $\R^N\setminus \Omega$.}
\]
Let $e\in S^{N-1}$ and $W:=W_e:=U-U_e$. Then $W_e\in \ccD_k(\Omega)$ is a solution of the linear problem
\begin{equation}\label{eq:linear}
\left\{\quad \begin{aligned}
IW&=C(x) \, W&&\text{in\/ $\Omega$}\\
W&=0 &&\text{in\/ $\R^N\setminus \Omega$}
\end{aligned}\right.
\end{equation}
which satisfies in addition $W=-W_e$. Here, $C(x)=(c_{ij}(x))_{1\leq i,j\leq m}$ where $c_{ij}\in L^{\infty}(\Omega)$, $i,j=1,\ldots,m$ is given by
\[
c_{ij}(x)=\int_0^1\frac{\partial}{\partial u_{j}} \, f_i(|x|, \, U_e+t(U-U_e))\ dt, \quad x\in \Omega.
\]
\end{lemma}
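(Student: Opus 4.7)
The plan is to derive the linear equation for $W$ by exploiting reflection symmetry of $\Omega$ and $k$, then invoking the fundamental theorem of calculus along the segment from $U_e$ to $U$.

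First I would observe that, because $\Omega$ is radial and $k(z)=k_0(|z|)$, the reflection $\sigma_e$ leaves $\Omega$ invariant and commutes with $I$. More precisely, for every $V\in \ccD_k(\Omega)$ also $V_e\in \ccD_k(\Omega)$, and the change of variables $y=\sigma_e(x)$ (with $|y|=|x|$) yields
\[
\cE_k(U_e,V)=\cE_k(U,V_e)=\int_{\Omega}F(|y|,U_e(y))\cdot V(y)\,dy,
\]
the second equality being the weak formulation for $U$ applied to $V_e\in \ccD_k(\Omega)$. This shows that $U_e$ is itself a weak solution of the same Dirichlet problem (with $U$ replaced by $U_e$). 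Since $U=0$ on $\R^N\setminus\Omega$ and $\Omega$ is radial, $U_e=0$ on $\R^N\setminus\Omega$ as well, so $U_e\in \ccD_k(\Omega)$ by Lemma~\ref{reflection}.1 (combined with the fact that $U_e\in L^2(\Omega)$). Consequently $W=U-U_e\in \ccD_k(\Omega)$, and the antisymmetry $W_e=U_e-U=-W$ is immediate.

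Subtracting the two weak formulations gives, for every $V\in \ccD_k(\Omega)$,
\[
\cE_k(W,V)=\int_{\Omega}\bigl[F(|x|,U(x))-F(|x|,U_e(x))\bigr]\cdot V(x)\,dx.
\]
Applying the fundamental theorem of calculus to the $C^1$ map $t\mapsto F(|x|,U_e(x)+t(U(x)-U_e(x)))$ on $[0,1]$ yields
\[
F(|x|,U)-F(|x|,U_e)=\Bigl(\int_0^1 D_u F(|x|,U_e+t(U-U_e))\,dt\Bigr)(U-U_e)=C(x)\,W(x),
\]
with $C(x)=(c_{ij}(x))$ as defined in the statement. This is exactly the weak formulation of $IW=C(x)W$ in $\Omega$ with $W=0$ in $\R^N\setminus\Omega$.

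It remains to verify that each $c_{ij}$ lies in $L^\infty(\Omega)$. Since $U$ (and therefore $U_e$) is bounded and $\Omega$ is bounded, the set $\{(|x|,U_e(x)+t(U(x)-U_e(x))):x\in\Omega,\,t\in[0,1]\}$ is contained in a compact subset of $[0,\infty)\times\R^m$, on which the continuous derivatives $\partial_{u_j}f_i$ are bounded; passing this bound through the integral gives $\|c_{ij}\|_{L^\infty(\Omega)}<\infty$. The only mildly delicate point of the argument is the change of variables identity $\cE_k(U_e,V)=\cE_k(U,V_e)$, which I expect to be the step worth writing out carefully, but it reduces to invariance of the kernel under $\sigma_e$ and of the domain of integration $\R^N\times\R^N$ under the product reflection.
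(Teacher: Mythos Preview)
Your argument is correct and follows essentially the same route as the paper: show $W\in\ccD_k(\Omega)$ with $W_e=-W$, subtract the weak formulations for $U$ and $U_e$, and apply the integral mean-value formula to $F$ along the segment from $U_e$ to $U$. You simply spell out in more detail the change-of-variables identity $\cE_k(U_e,V)=\cE_k(U,V_e)$ and the $L^\infty$ bound on the $c_{ij}$, both of which the paper leaves implicit.
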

\begin{proof}
Let $e\in S^{N-1}$, $W=(w_1,\ldots,w_m)$ as in the statement and fix $i\in\{1,\ldots,m\}$. Then clearly $W=-W_e$ and by Lemma \ref{reflection} we have $W\in \ccD_k(\Omega)$ and we have in weak sense in $\Omega$
\begin{align*}
Iw_i&=f_i(|x|,U)-f_i(|x|,U_e)=\int_0^1 \frac{\partial}{\partial U} \, f_i(|x|, \, U_e +t (U(x)-U_e(x)))\ dt \cdot (U(x)-U_e(x))\\
&=c_{i1}(x) \, w_1+\ldots +c_{im}(x) \, w_m
\end{align*}
where we have used the mean value theorem.
\end{proof}

\subsubsection{Foliated Schwarz symmetry}\label{foliated}

Denote by $\cH$ the set of open half spaces in $\R^N$. Give $u:\R^N\to \R$, the \textit{polarization $u_H:\R^N\to\R$ of $u$ w.r.t. $H\in \cH$} is given by
\begin{equation}\label{def:polarization}
u_H(x)=\left\{\begin{aligned} & \max\{\, u(x), \, u(\sigma_H(x)) \,\} && x\in H;\\
&\min\{\, u(x), \, u(\sigma_H(x)) \,\} && x\in \R^N\setminus H,\end{aligned}\right.
\end{equation}
where $\sigma_H(x)$ denotes the reflection of $x$ at $\partial H$. For $U=(u_1,\ldots,u_m)\colon\R^N\to \R^m$ and $H\in \cH$ we denote similarly $U_H\colon\R^N\to\R^m$ by $U_H:= ((u_1)_H,\ldots,(u_m)_H)$. Clearly, $U\geq U_e$ if and only if $U=U_{H_e}$. The following Proposition relates the polarization of a function with the property that this function is foliated Schwarz symmetric. 
\begin{prop}[Proposition 3.3, \cite{SW12}]\label{prop-polarization-foliation}
Let\/ $\Omega\subset \R^N$ be an open radial set and let $P$ be a set of functions $u:\R^N\to \R$, which are continuous. Moreover, let
\[
M:=\{e\in S^{N-1}\;:\; u=u_{H_e} \quad \text{on $\Omega_e$ for all $U\in P$ }\}.
\]
Assume that there is $e_0\in M$ such that the following is true:

For all two dimensional subspaces $V\subset \R^N$ with $e_0\in V$ there are $e_+,e_-\in M\cap V$, $e_+\neq e_-$, which are in the same connected component of\/ $M\cap V$ and satisfy $u=u_{e_+}$ and $u=u_{e_-}$ for every $u\in P$.

Then there is $p\in S^{N-1}$ such that for every connected component $D$ on $\Omega$ the functions $u \, \chi_D$ for $u\in P$ are foliated Schwarz symmetric with respect to $p$.
\end{prop}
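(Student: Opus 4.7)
The plan is to combine the reflective symmetries encoded in $S_P := \{e \in S^{N-1} : u = u_e \text{ for every } u \in P\}$ with the dominance structure of $M$, in order to realize every $u \in P$ as axially symmetric about a common axis $\mathbb R p$ and monotone in the polar angle from $p$. Since every $u \in P$ is continuous, both $M$ and $S_P$ are closed in $S^{N-1}$, and $S_P \subset M \cap (-M)$. The hypothesis provides, for each two-dimensional subspace $V$ through $e_0$, a pair $e_+^V, e_-^V \in S_P \cap V$ lying in the same connected component $C_V$ of $M \cap V$. The composition $\sigma_{e_+^V} \circ \sigma_{e_-^V}$ is then a non-trivial rotation of $V$ fixing $V^\perp$ pointwise and leaving every $u \in P$ invariant.

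The core step is to propagate the two pointwise equalities at $e_\pm^V$ along the arc $C_V \subset M \cap V$: combining the one-sided inequality $u \ge u_e$ that holds throughout $C_V$ with the two-sided equalities at the endpoints, and using the reflection group generated, one argues that in fact $C_V \subset S_P$. Hence $S_P \cap V$ contains a non-degenerate closed arc on the unit circle of $V$, so that the closed subgroup of $SO(V)$ preserving every $u \in P$ is positive-dimensional and therefore equal to $SO(V)$. Thus every $u \in P$ is rotationally invariant in $V$, for every two-plane $V$ through $e_0$. Ranging over all such $V$, these rotations generate, inside the closed symmetry group $G_P := \{g \in O(N) : u \circ g = u \text{ for every } u \in P\}$, the stabilizer of some axis $\mathbb R p \subset \mathbb R^N$ (with $p$ determined jointly by $e_0$ and the generating reflections), so every $u \in P$ is axially symmetric about $\mathbb R p$.

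Finally, the dominance at $e_0 \in M$ yields monotonicity in the polar angle $\theta = \arccos(\tfrac{x}{|x|}\cdot p)$: the inequality $u \ge u_{e_0}$ on $H_{e_0}$, paired with axial symmetry about $\mathbb R p$, forces $u$ restricted to each sphere $r S^{N-1}$ meeting $\Omega$ to be non-increasing in $\theta$. Equivalently, the superlevel sets $\{u \ge c\}$ on each such sphere are either the full sphere or spherical caps centered at $rp$. Restriction to any connected component $D$ of $\Omega$ preserves this property, which is precisely foliated Schwarz symmetry with respect to $p$.

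The main obstacle is the propagation step in the second paragraph---passing from two isolated pointwise reflective symmetries at $e_\pm^V$ to a continuous family of symmetries filling out the connected component $C_V$. This requires a delicate interplay between the one-sided monotonicity encoded in $M$ and the two-sided pointwise equalities in $S_P$, and is the technical heart of \cite[Proposition~3.3]{SW12}.
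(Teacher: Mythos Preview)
The paper does not prove this proposition; it is quoted from \cite{SW12} and used as a black box in the proof of Theorem~\ref{thm:main1-final}. So there is no argument in the present paper to compare your proposal against.

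Independently of that, your outline contains a genuine error, not merely a deferred detail. The propagation claim---that the connected arc $C_V \subset M \cap V$ joining $e_+^V$ and $e_-^V$ must lie entirely in $S_P$---is false. Take $N = 2$ and let $u$ be any non-radial continuous function that is foliated Schwarz symmetric with respect to $p = e_1$ and strictly decreasing in the polar angle. Then $S_P = \{e_2, -e_2\}$, while $M = \{e \in S^1 : e \cdot e_1 \ge 0\}$ is the closed right half-circle. With $V = \R^2$, the hypothesis of the proposition is satisfied by $e_+ = e_2$, $e_- = -e_2$, both lying in the (connected) component $C_V = M$. But $C_V$ is the full half-circle, certainly not contained in the two-point set $S_P$. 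Note also that here $e_- = -e_+$, so $\sigma_{e_+} \circ \sigma_{e_-}$ is the identity, not a non-trivial rotation as you assert in your first paragraph.

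Worse, if your propagation step did hold, your own subsequent reasoning would force every $u \in P$ to be $SO(V)$-invariant for every two-plane $V \ni e_0$; composing such planar rotations one can carry any $x$ to any $y$ with $|y| = |x|$ (first rotate $x$ to $|x|\,e_0$ in the plane spanned by $e_0$ and $x$, then to $y$ in the plane spanned by $e_0$ and $y$), so every $u \in P$ would be radial. Since the proposition is invoked in this paper precisely for solutions that are \emph{not} radial, this conclusion is too strong, confirming that the propagation step cannot hold as you state it. The argument in \cite{SW12} proceeds differently: it does not aim at $C_V \subset S_P$, but rather locates $p$ so that the full closed hemisphere $\{e : e\cdot p \ge 0\}$ lies in $M$, from which axial symmetry (via $M \cap (-M) \subset S_P$ on the equator $p^\perp$) and polar monotonicity both follow.
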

Proposition \ref{prop-polarization-foliation} is essential in our proofs and we apply it to the family $P=\{u_1,\ldots,u_m\}$, where $(u_1,\ldots,u_m)\in \ccD_k(\Omega)\cap C(\R^N)$ solves \eqref{eq:system-basis}. The assumption of the proposition is verified with the rotating plane method based on the notation of Subsection \ref{rotating}. We note that the polarization of a function in $\cD_k(\Omega)$ remains in $\cD_k(\Omega)$ -- we include a statement of this fact for the reader's convenience in Lemma \ref{reduces-functionals} below.

\section{Linear problems of systems and the maximum principle}\label{mp}

In the following we collect maximum principles needed for our proofs for linear systems of equations. Here, the problems are stated in a half space and the definition of supersolution is adjusted to the oddness of the solution with respect to a hyperplane as presented in Lemma \ref{eq:linear}. 
In the following, let as above $\cH$ be the set of half spaces in $\R^N$ and fix $H\in \cH$ and $D\subset H$, an open bounded set. We denote the reflection at $\partial H$ by $\sigma_H$. Moreover, let $c_{ij}\in L^{\infty}(D)$, $i,j=1,\ldots,m$ be given and denote $C(x):=(c_{ij}(x))_{1\leq i,j\leq m}$. The following maximum principles are for functions $U\in \ccV_k(D)$ such that $U=-U\circ \sigma_H$ and 
\[
\cE_k(U,V)\geq \int_{\Omega}C(x) \, U(x)\cdot V(x)\ dx.
\]
We also say, that $U$ satisfies in weak sense
\begin{equation}\label{prob:linear0}
\left\{\begin{aligned}IU&\geq C(x) \, U(x) &&\text{in $D$}\\
U&\geq 0 && \text{ in $H\setminus D$}\\
U&=U\circ \sigma_H &&\text{in $\R^N$}\end{aligned}\right.
\end{equation}

We call the linear system \eqref{prob:linear0} \textit{weakly coupled (in $D$)}, if
\[
c_{ij}\geq0 \quad\text{ for all $i,j$ such that $i\neq j$.}
\]
Moreover, we call the linear system \eqref{prob:linear0} \textit{fully coupled}, if it is weakly coupled and
\[
\text{for all $i,j$ there is a compact set $K\subset D$ with $|K|>0$ and}\quad \essinf_{K}c_{ij}>0. 
\]

\begin{prop}[Small volume maximum principle for systems]\label{wmp}
Let $c_{\infty}>0$ and $H\in \cH$. Then there is $\delta>0$ such that for any $D\subset H$ open bounded with $|D|<\delta$ the following holds. If $c_{ij}\in L^{\infty}(D)$, $i,j=1,\ldots,m$ are weakly coupled and with $c_{ij}\leq c_{\infty}$ for $i,j=1,\ldots,m$, then any function $U\in \ccV_k(D)$ satisfying \eqref{prob:linear0} satisfies $U\geq 0$ in $D$.
\end{prop}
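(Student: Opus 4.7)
The plan is to test the weak supersolution inequality against the componentwise negative part of $U$ restricted to $H$, and then squeeze $\|U^-\|_{L^2(D)}$ between an upper bound coming from the reflection estimate of Lemma~\ref{reflection} and a lower bound that uses only the weak coupling of $C(x)$.

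First I would take $V:=\chi_H U^-$ as a test vector. Because $U=-U\circ\sigma_H$ and $U\ge 0$ on $H\setminus D$, the componentwise version of Lemma~\ref{reflection}.3 (applied to the symmetrised domain $D\cup\sigma_H(D)$ so that $D$ plays the role of $\Omega_e$) yields $\chi_H U^-\in\ccD_k(D)$ together with the bound
\[
\cE_k(U,\chi_H U^-)\leq -\lambda_1(D)\,\|U^-\|_{L^2(D)}^2 .
\]
On the other hand $\chi_H U^-\ge 0$, so the supersolution inequality \eqref{prob:linear0} gives
\[
\cE_k(U,\chi_H U^-)\geq \int_D C(x)\,U(x)\cdot U^-(x)\,dx ,
\]
since $D\subset H$ means $\chi_H U^-=U^-$ on $D$.

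The main computation is then to bound the integrand from below using the weak coupling. Writing $u_j=u_j^+-u_j^-$ and using $u_iu_i^-=-(u_i^-)^2$,
\[
\int_D C U\cdot U^-\,dx=-\sum_i\!\int_D c_{ii}(u_i^-)^2\,dx+\sum_{i\ne j}\!\int_D c_{ij}u_j^+u_i^-\,dx-\sum_{i\ne j}\!\int_D c_{ij}u_j^-u_i^-\,dx .
\]
Weak coupling gives $c_{ij}\ge 0$ for $i\ne j$, so the middle term is nonnegative and can be dropped; the remaining terms are pointwise bounded by $c_\infty\bigl(\sum_i u_i^-\bigr)^2\leq c_\infty m\,|U^-|^2$. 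Combining both inequalities therefore produces
\[
\bigl(\lambda_1(D)-c_\infty m\bigr)\,\|U^-\|_{L^2(D)}^2\leq 0 .
\]

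Finally, since $D$ is bounded and contained in the half-space $H$, the asymptotic \eqref{eigenvalue} gives $\lambda_1(D)\to\infty$ as $|D|\to 0$, so there exists $\delta>0$, depending only on $c_\infty$, $m$ and the kernel $k$, such that $|D|<\delta$ forces $\lambda_1(D)>c_\infty m$. This yields $\|U^-\|_{L^2(D)}=0$, hence $U\ge 0$ a.e.\ in $D$, as required. The only genuine subtlety I expect is the bookkeeping in the first step: the reflection estimate of Lemma~\ref{reflection} is formulated for radial ambient domains, so I would apply it after passing to the symmetric extension $D\cup\sigma_H(D)$ and using $\cV_k(D\cup\sigma_H(D))=\cV_k(D)$ for antisymmetric functions together with the antisymmetry $U=-U\circ\sigma_H$.
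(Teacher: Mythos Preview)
Your proof is correct and follows essentially the same approach as the paper: test against $\chi_H U^-$, use the reflection estimate of Lemma~\ref{reflection} for the upper bound on $\cE_k(U,\chi_H U^-)$, expand $\int_D C U\cdot U^-$ and exploit weak coupling to discard the cross terms $c_{ij}u_j^+u_i^-$, then conclude via $\lambda_1(D)\to\infty$ as $|D|\to 0$. The only cosmetic differences are that you obtain the sharper constant $m$ from Cauchy--Schwarz where the paper uses $2^{m-1}$, and that you are more explicit about the domain bookkeeping needed to invoke Lemma~\ref{reflection} in the non-radial setting (the paper simply cites the lemma, relying on the fact that the underlying estimate from \cite[Lemma~3.2]{J16} does not require radiality).
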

\begin{proof}
For $m=1$ see \cite[Proposition 3.5]{JW16}. The general case follows similarly. Indeed, let $c_{\infty}>0$ be given and by \eqref{eigenvalue} we may fix $\delta>0$ such that $\lambda_1(D)>2^{m-1}c_{\infty}$ for all $D\subset H$ with $|D|<\delta$. Moreover, by Lemma \ref{reflection} we may choose $W=\chi_{H} \, U^-\in \ccD_k(D)$ as a suitable test function and we have with Lemma \ref{reflection} and the weak coupling assumption
\begin{align*}
-\lambda_1(D) \, &\|U^-\|_{L^2(D)}^2\geq -\cE_k(\chi_H \, U^-, \, \chi_H \, U^-)\geq \cE_k(U, \, \chi_H \, U^-)\geq \int_{D}C(x) \, U\cdot U^-\ dx\\
&= \sum_{i,j=1}^{m}\int_D c_{ij}(x) \, u_i \, u_j^-\ dx=-\sum_{i=1}^{m}\int_D c_{ii}(x) \, (u_i^-)^2\ dx +\sum_{\substack{i,j=1 \\ i\neq j}}^{m}c_{ij}(x) \, (u_i^+-u_i^-)u_j^-\ dx\\
&\geq -c_{\infty}\sum_{i=1}^{m}\int_D (u_i^-)^2\ dx -c_{\infty}\sum_{\substack{i,j=1 \\ i\neq j}}^{m}u_i^-u_j^-\ dx= -c_{\infty}\int_D (\sum_{i=1}^{m}u_i^-)^2\geq -2^{m-1} \, c_{\infty} \, \|U^-\|_{L^2(D)}^2.
\end{align*}
Hence
\[
(2^{m-1} \, c_{\infty}-\lambda_1(D)) \, \|U^-\|_{L^2(D)}^2\geq0,
\]
which is only possible if $U^-=0$ a.e. on $D$.
\end{proof}

\begin{prop}[Strong maximum principle for systems]\label{smp}
Let $H\in \cH$, $D\subset H$ be a domain, and let $c_{ij}\in L^{\infty}(D)$, $i,j=1,\ldots,m$ be strongly coupled. Then for any function $U\in \ccV_k(D)$ satisfying in weak sense \eqref{prob:linear0} with $U\geq 0$ in $H$ we have either $U\equiv 0$ in $D$ or $U>0$ in $D$ in the sense that
\[
\essinf_{K} u_i>0 \quad\text{ for $i=1,\ldots,m$ and all compact $K\subset D$.}
\]
\end{prop}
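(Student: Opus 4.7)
The plan is to reduce to the scalar strong maximum principle applied componentwise and then exploit the full-coupling assumption to rule out the mixed case. I read ``strongly coupled'' here as synonymous with ``fully coupled'' in the sense of the definition preceding Proposition~\ref{wmp}, since the bare weak-coupling property is insufficient to propagate positivity between components.

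First, since the system is weakly coupled and $U\geq 0$ on $H$, each component $u_i$ satisfies $\cE_k(u_i,v)\geq \int_D c_{ii}u_i v\,dx$ for every nonnegative $v\in \cD_k(D)$: test the system inequality with $V=v\,e_i$ and drop the nonnegative terms $\sum_{j\neq i}c_{ij}u_j v$. Combined with $u_i=-u_i\circ\sigma_H$ and $u_i\geq 0$ on $H\setminus D$, Lemma~\ref{vk-properties}(1) shows that each $u_i$ is an antisymmetric scalar supersolution of the type treated in \cite{J16}. The scalar strong maximum principle there yields the dichotomy: for each $i$, either $u_i\equiv 0$ on $D$ or $\essinf_K u_i>0$ for every compact $K\subset D$.

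Partition $\{1,\ldots,m\}=Z\sqcup S$ with $Z=\{i:u_i\equiv 0\text{ on }D\}$. If $S=\emptyset$ then $U\equiv 0$ on $D$; if $Z=\emptyset$ the stated conclusion holds. So assume for contradiction both are nonempty and pick $i\in Z$, $j\in S$. By full coupling there is a compact $K\subset D$ of positive measure with $c_0:=\essinf_K c_{ij}>0$, and since $j\in S$ we also have $c_1:=\essinf_K u_j>0$. Choose $v\in \cD_k(D)$ with $v\geq 0$, $\supp v\subset K$, and $\int_K v\,dx>0$, and test the weak formulation with $V=v\,e_i$. The right-hand side is bounded below by $c_0c_1\int_K v\,dx>0$, after discarding the remaining nonnegative terms via weak coupling and $u_l\geq 0$. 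On the other hand, using $u_i\equiv 0$ on $D$, the antisymmetry $u_i\circ \sigma_H=-u_i$, and the change of variables $y\mapsto\sigma_H(y)$ on the part of the integration region with $y\in\R^N\setminus H$, a direct splitting of $\cE_k(u_i,v)$ into the four regions determined by whether $x$ or $y$ lies in $D$ reduces it to
\begin{equation*}
\cE_k(u_i,v)=-\int_D v(x)\int_{H\setminus D}u_i(y)\bigl[k(x-y)-k(x-\sigma_H(y))\bigr]\,dy\,dx,
\end{equation*}
which is nonpositive because $u_i\geq 0$ on $H\setminus D$ and, for $x,y\in H$, the identity $|x-\sigma_H(y)|^2=|x-y|^2+4(x\cdot e)(y\cdot e)$ together with the monotonicity of $k_0$ yields $k(x-y)\geq k(x-\sigma_H(y))$. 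The two bounds on $\cE_k(u_i,v)$ contradict, so $Z$ and $S$ cannot both be nonempty.

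The main obstacle I anticipate is the antisymmetric manipulation of $\cE_k(u_i,v)$ in the final step: one must split the double integral by whether $x,y\in D$, discard the two regions where the integrand vanishes (either $u_i(x)=u_i(y)=0$, or $v(x)=v(y)=0$), and then convert integration over $\R^N\setminus H$ to integration over $H$ via the reflection so that the reflection-distance inequality applies. Once this identity is recorded, the contradiction is immediate. The only other care point is that the componentwise strong maximum principle invoked at the outset is the antisymmetric scalar version from \cite{J16} rather than the classical one, since $u_i$ is not assumed nonnegative on all of $\R^N$.
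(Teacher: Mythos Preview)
Your approach is essentially identical to the paper's: componentwise reduction to the scalar antisymmetric strong maximum principle from \cite{J16}, followed by a contradiction obtained by testing the equation of a vanishing component against a nonnegative $v$ and computing $\cE_k(u_i,v)$ via the reflection identity.

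One technical point needs adjusting. You require a test function $v\in\cD_k(D)$ with $\supp v\subset K$ and $\int_K v>0$, but the full-coupling definition only furnishes a compact $K\subset D$ of positive measure, which may well have empty interior (take for instance $c_{ij}$ equal to the indicator of a fat Cantor-type set). In that case no nonzero $v\in\cD_k(D)$ with support in $K$ need exist, and your lower bound on the right-hand side is not justified. The paper avoids this by choosing $v\in C^2_c(D)$ with $v\geq 0$ and $v\equiv 1$ on $K$ (which always exists since $K\subset D$ is compact); then, since $c_{ij}\geq 0$ and $u_j\geq 0$ on all of $D$ by weak coupling and the hypothesis $U\geq 0$ in $H$, one still gets $\int_D c_{ij}u_j v\,dx\geq \essinf_K c_{ij}\cdot\essinf_K u_j\cdot |K|>0$. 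With this modification your argument goes through verbatim.
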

\begin{proof}
For $m=1$ see \cite[Proposition 3.6]{JW16}. For $m\in \N$ arbitrary, we first note that for any $i=1,\ldots,m$ we have in weak sense
\[
Iu_i\geq \sum_{j=1}^{m} c_{ij}(x) \, u_j\geq c_{ii}(x) \, u_i\quad\text{in $D$,}
\]
so that $u_i\equiv 0$ in $D$ or $u_i>0$ in $D$ by \cite[Proposition 3.6]{JW16}. If $U\not\equiv 0$ in $D$, then there is at least one $i\in \{1,\ldots,m\}$ such that $u_i\not\equiv 0$ in $D$. But then $u_i>0$ in $D$ (in the essential sense). Next, let $j\in\{1,\ldots,m\}$, $j\neq i$ and assume by contradiction that $u_j\equiv 0$ in $D$. Then for $v\in \cD_k(D)$, $v\geq0$ we have
\begin{equation}\label{smp:proof1}
\cE_k(u_j,v)\geq \sum_{k=1}^{m}\int_D c_{kj}(x) \, u_k \, v \ dx\geq \int_D c_{jj}(x) \, u_j \, v+ c_{ij}(x) \, u_i \, v\ dx=\int_D c_{ij}(x) \, u_i \, v\ dx.
\end{equation}
Since there is a compact set $K\subset D$ with $|K|>0$, $\essinf_K\ c_{ij}>0$, and also $\essinf_K\ u_i>0$, and moreover, there is $v\in \cD_k(D)\cap C^2_c(D)$, $v\geq0$ with $v\equiv 1$ on $K$, it follows that by \eqref{smp:proof1} we have
\begin{align*}
0&<\cE_k(u_j,v) = -\int_{D}\int_{\R^N\setminus D} u_j(y) \, v(x) \, k(x-y)\ dy \, dx\\
&=-\int_D \int_{H\setminus D}u_j(y) \, v(x) \, k(x-y)\ dydx -\int_D \int_{H} u_j(\sigma_H(y)) \, v(x) \, k(x-\sigma_H(y))\ dy \, dx\\
&=-\int_D \int_{H\setminus D}v(x) \, u_j(y) \, [k(x-y)-k(x-\sigma_H(y))]\ dy \, dx\leq0,
\end{align*}
where we have used that $u_j(x)=u_j(\sigma_H(x))$ for all $x\in H$ since $U$ solves \eqref{prob:linear0}. Clearly, this however is a contradiction and hence $u_j\equiv 0$ in $D$ is impossible. Thus $u_j>0$ in $D$ by \cite[Proposition 3.6]{JW16} and since $j$ was arbitrary the statement of the Proposition follows.
\end{proof}

\begin{remark}\label{connectedness}
We note that the connectedness of $D$ in Proposition \ref{smp} is not needed, if $k_0$ is strictly decreasing and hence $k>0$ in $\R^N$.
\end{remark}

\begin{remark}\label{nonantisymmetric}
We emphasize that the conclusion of Proposition \ref{wmp} and \ref{smp} also follow if $U\in\ccV_k(D)$ satisfies $IU\geq C(x)U$ in $D$ and $U\geq 0$ on $\R^N\setminus D$. The proof in this case is similar, but simpler. 
\end{remark}

\section{Proof of the symmetry result}\label{sec:proof-main-thm}

Using the notation of the previous sections, Theorem \ref{thm:main1} can be stated equivalently as

\begin{thm}\label{thm:main1-final}
Let $m\in\N$, $\Omega\subset\R^N$ be a bounded radial domain. Assume that $F\in C^1([0,\infty)\times \R^m,\R^m)$, $(r,U)\mapsto F(r,U)$ satisfies
\begin{equation}\label{thm:main1-final-assumption1}
\frac{\partial}{\partial u_i}F_j\geq 0 \quad\text{on $[0,\infty)\times\R^m$, $i,j\in\{1,\ldots,m\}$, $i\neq j$.}
\end{equation}
Let $U\in \ccD_k(\Omega)$ be a bounded continuous solution of \eqref{eq:system2}, and assume that there is $e_0\in S^{N-1}$ such that $U\gneq U_{e_0}=U\circ \sigma_{e_0}$ in $\Omega_{e_0}$ in the sense that $U(x)\geq U_{e_0}(x)$ for all $x\in \Omega_{e_0}$. Suppose, further, that there exist $i\in\{1,\ldots,m\}$ and $x\in \Omega_{e_0}$ such that $u_{i}(x)> u_i(\sigma_{e_0}(x))$.
If either 
\begin{equation}\label{thm:main1-final-assumption1b}
\frac{\partial}{\partial u_i}F_j> 0 \quad\text{on $(0,\infty)\times\R^m$, $i,j\in\{1,\ldots,m\}$, $i\neq j$}
\end{equation}
or $U>0$ in $\Omega$ and
\begin{equation}\label{thm:main1-final-assumption1c}
\frac{\partial}{\partial u_i}F_j> 0\quad\text{on $(0,\infty)^{m+1}$, $i,j\in\{1,\ldots,m\}$, $i\neq j$}
\end{equation}
 then there is $p\in S^{N-1}$ such that $U$ is foliated Schwarz symmetric with respect to $p$ and strictly decreasing in the polar angle.
\end{thm}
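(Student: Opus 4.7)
I would run the rotating plane method of \cite{J16}, adapted to systems via the linearization of Lemma \ref{linearization} and the system maximum principles of Section \ref{mp}. For each $e \in S^{N-1}$ set $W_e := U - U_e$, and define
\[
M := \{e \in S^{N-1} \;:\; W_e \geq 0 \text{ on } \Omega_e\}.
\]
Radiality of $\Omega$ gives $W_e = 0$ on $H_e \setminus \Omega$, so this coincides with $\{e \in S^{N-1} \;:\; W_e \geq 0 \text{ on } H_e\}$. By hypothesis $e_0 \in M$, and $M$ is closed by continuity of $U$. The eventual goal is to verify the assumption of Proposition \ref{prop-polarization-foliation} for $P := \{u_1,\ldots,u_m\}$ at the point $e_0 \in M$.

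\textbf{Linearization and strong coupling on $\Omega_e$.} By Lemma \ref{linearization}, $W_e \in \ccD_k(\Omega)$ satisfies $I W_e = C^e(x) W_e$ in $\Omega$ in weak sense, with $W_e = 0$ outside $\Omega$ and $W_e \circ \sigma_e = -W_e$. Assumption \eqref{thm:main1-final-assumption1} forces $c^e_{ij} \geq 0$ for $i \neq j$, so the system is weakly coupled. On $\Omega_e$ one has $|x| > 0$, so either \eqref{thm:main1-final-assumption1b} directly, or $U > 0$ combined with \eqref{thm:main1-final-assumption1c} (since then the convex combination $U_e(x) + t(U(x) - U_e(x))$ is componentwise positive for every $t \in (0,1]$ and $x \in \Omega_e$, regardless of whether $\sigma_e(x) \in \Omega$), yields strict positivity of $c^e_{ij}$ for $i \neq j$ on $\Omega_e$. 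Hence the linearized system is strongly coupled on $\Omega_e$, and Proposition \ref{smp} produces the dichotomy: either $W_e \equiv 0$ on $\Omega_e$, or $W_e > 0$ essentially on $\Omega_e$. Applied to $e_0$, the strict dominance hypothesis rules out the first alternative, so $W_{e_0} > 0$ on $\Omega_{e_0}$, and in particular $-e_0 \notin M$.

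\textbf{Rotating plane in a 2D slice.} Fix a 2D subspace $V \subset \R^N$ with $e_0 \in V$ and parametrize $V \cap S^{N-1}$ by $\theta \mapsto e(\theta)$ with $e(0) = e_0$. Let
\[
\theta^* := \sup\{\theta \in [0,\pi] \;:\; e(\tau) \in M \text{ for all } \tau \in [0,\theta]\};
\]
closedness of $M$ makes this attained with $e(\theta^*) \in M$, and $\theta^* < \pi$ since $-e_0 \notin M$. The decisive claim is $W_{e(\theta^*)} \equiv 0$, i.e., $U$ is symmetric with respect to $T_{e(\theta^*)}$. If not, the dichotomy gives $W_{e(\theta^*)} > 0$ strictly on $\Omega_{e(\theta^*)}$, and for $\theta$ slightly larger than $\theta^*$ one decomposes $\Omega_{e(\theta)}$ into a compact set $K \subset\subset \Omega_{e(\theta^*)}$ on which $W_{e(\theta)}$ inherits strict positivity from $W_{e(\theta^*)}$ by continuity of $U$, and a thin slab $\Omega_{e(\theta)} \setminus K$ of arbitrarily small Lebesgue measure to which the small-volume maximum principle Proposition \ref{wmp} applies (the required uniform $L^\infty$ bound on $C^{e(\theta)}$ follows from boundedness of $U$ and from $F \in C^1$). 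This yields $W_{e(\theta)} \geq 0$ on $H_{e(\theta)}$, contradicting the definition of $\theta^*$. Running the same argument with $\theta$ decreasing from $0$ produces $\theta^{**} \in (-\pi, 0)$ with $U = U_{e(\theta^{**})}$; the arc $\{e(\theta) : \theta^{**} \leq \theta \leq \theta^*\}$ lies in $M \cap V$ and connects $e_+ := e(\theta^*)$ and $e_- := e(\theta^{**})$ in the same component.

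\textbf{Conclusion and main obstacle.} Since $V$ was arbitrary, Proposition \ref{prop-polarization-foliation} applied with $P = \{u_1,\ldots,u_m\}$ produces $p \in S^{N-1}$ such that each $u_i$ is foliated Schwarz symmetric with respect to $p$. Strict decrease in the polar angle follows from the dichotomy: if $W_e \equiv 0$ for some $e$ with $e \cdot p > 0$, then $U$ would be symmetric with respect to $T_e$ in addition to being axially symmetric around $p$; the group generated would force radiality of $U$, contradicting the strict dominance at $e_0$. Hence $W_e > 0$ strictly on $\Omega_e$ for every such $e$. The principal technical obstacle is the perturbation step that propagates dominance past $\theta^*$: it requires simultaneously a uniform $L^\infty$ bound on $C^{e(\theta)}$ as $\theta$ varies near $\theta^*$, a uniform positive lower bound on $W_{e(\theta^*)}$ on compacta of $\Omega_{e(\theta^*)}$, and compatibility of both with the small-volume threshold $\delta$ furnished by Proposition \ref{wmp}.
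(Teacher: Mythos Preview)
Your argument is correct and follows essentially the same route as the paper: linearize via Lemma~\ref{linearization}, apply the strong maximum principle (Proposition~\ref{smp}) for the dichotomy at~$e_0$, perturb via the small-volume principle (Proposition~\ref{wmp}) to rotate within each two-dimensional slice until the difference vanishes, and then invoke Proposition~\ref{prop-polarization-foliation}. The one deviation is your argument for strict monotonicity in the polar angle: the paper simply observes that $W_{e^\phi}>0$ on $\Omega_{e^\phi}$ for every $\phi$ strictly between the extremal angles $\phi_-,\phi_+$ (since $W_{e^\phi}\equiv 0$ there would, by the same perturbation step, contradict extremality), whereas you argue that an additional hyperplane symmetry $\sigma_e$ with $e\cdot p>0$ together with axial symmetry about $\R p$ forces radiality. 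That claim is true but deserves one line of justification: restrict to the plane spanned by $p$ and~$e$, where the two reflections compose to a rotation by an angle in $(0,2\pi)$, giving a nontrivial periodicity of $\theta\mapsto u(r,\theta)$ that, combined with monotonicity on $[0,\pi]$, forces $u$ to be constant in~$\theta$.
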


\begin{proof}
Denote $W_e:=U-U_e$ for $e\in S^{N-1}$ and note that $W$ satisfies in weak sense
\begin{equation}\label{eq:linearized}
\left\{\ \begin{aligned}
IW_e&= C(x) \, W_e &&\text{in $\Omega$}\\
W_e&=0 &&\text{in $\R^N\setminus \Omega$}\\
W_e&=-W_e\circ \sigma_e &&\text{in $\R^N$,}
\end{aligned}\right.
\end{equation} 
where $C(x)=(c_{ij}(x))_{1\leq i,j\leq m}$ with entries $c_{ij}(x)=\int_0^1\frac{\partial}{\partial u_i}f_j(|x|, \, U_e(x)+t(U(x)-U_e(x)))\ dt$ (see Lemma \ref{linearization}). Note that by our assumptions there is $c_{\infty}>0$ such that
\[
\max_{i,j}\sup_{\substack{x\in \Omega\\ e\in S^{N-1}}}|c_{ij}(x)|\leq c_{\infty}.
\]
By assumption \eqref{thm:main1-final-assumption1} it follows that the system \eqref{eq:linearized} is weakly coupled and, moreover, if \eqref{thm:main1-final-assumption1b} holds or if $U>0$ in $\Omega$ and \eqref{thm:main1-final-assumption1c} holds, then the system is also strongly coupled.

\textit{Step 1:} We claim that
\begin{equation}\label{step1}
W_{e_0}>0 \quad\text{ in $\Omega_{e_0}$.}
\end{equation}
Note that $W_{e_0}\gneq0$ in $\Omega_{e_0}$ and for $W_{e_0}=(w_1,\ldots,w_m)$ we have for $i=1,\ldots,m$
\[
\left\{\ \begin{aligned}
Iw_i&= c_{ii} \, w_i+\sum_{\substack{j=1\\j\neq i}}^mc_{ij} \, w_j &&\text{in $\Omega_{e_0}$}\\
w_i&=0 &&\text{in $H_{e_0}\setminus \Omega_{e_0}$}\\
w_i&=-w_i\circ \sigma_e &&\text{in $\R^N$,}
\end{aligned}\right.
\] 
Hence it by Proposition \ref{smp} that $W_{e_0}>0$ (since $W_{e_0}\equiv0$ is impossible by assumption), that is \eqref{step1} holds.

\bigskip

\textit{Step 2:} Next, by continuity of $U$ and $e\mapsto \sigma_e$, there is for any $\delta>0$ an $\epsilon>0$ such that for any $e\in S^{N-1}$ with $|e-e_0|<\epsilon$ there is $K\subset \Omega_e\cap \Omega_{e_0}$ with
\[
W_e\geq0 \quad \text{in $K$} \quad \text{and}\quad |\Omega_e\setminus K|\leq \delta.
\] 
We claim that there is $\epsilon>0$ such that
\begin{equation}\label{step2}
\text{ $W_e\geq0$ in $\Omega_e$ for $e\in S^{N-1}$ with $|e-e_0|<\epsilon$.}
\end{equation}
To see \eqref{step2}, we use Proposition \ref{wmp}. Fix $\delta>0$ such that $\lambda_1(M)>c_{\infty}$ for any $A\subset\R^N$ with $|A|<\delta$. Let $\epsilon>0$ be given by the above remark and fix $e\in S^{N-1}$ with $|e-e_0|<\epsilon$ and $K\subset \Omega_e\cap \Omega_{e_0}$ with $W_e\gneq0$ in $K$. Finally, let $A:=\Omega_e\setminus K$. As before, let $W_e=(w_1,\ldots,w_m)$ and note that now for $i=1,\ldots,m$
\[
\left\{\ \begin{aligned}
Iw_i&= c_{ii} \, w_i+\sum_{\substack{j=1\\j\neq i}}^mc_{ij} \, w_j &&\text{in $A$}\\
w_i&=0 &&\text{in $H_{e_0}\setminus A$}\\
w_i&=-w_i\circ \sigma_e &&\text{in $\R^N$,}
\end{aligned}\right.
\] 
Since by the assumptions on $F$ (and $U$) this system is weakly coupled, Proposition \ref{wmp} implies $W_e\geq0$ in $H_e$. Hence \eqref{step2} holds.

\bigskip

\textit{Step 3:} Next, we fix $(-\pi,\pi)\ni \phi\to R(\phi)\in O(N)$, such that $R(\phi)$ is a rotation of angle $\phi$ in a fixed direction and put $e^{\phi}:=R(\phi) \, e_0\in S^{N-1}$. Denote
$$
M:=\{\phi\in(-\pi,\pi)\;:\; W_{e^{\phi}}\geq 0 \text{ in $\Omega_{e^\phi}$ }\}.
$$
and let
\[
\phi_+:=\sup M\quad\text{and}\quad \phi_-:=\inf M.
\]
Clearly, $\phi_+\in(\epsilon, \, \pi-\epsilon)$ for some $\epsilon>0$ by Step 1 and similarly $\phi_-\in (-\pi+\epsilon, \, -\epsilon)$ for some $\epsilon>0$. Let $e_+:=e^{\phi_+}$ and $e_-:=e^{\phi_-}$. The proof is finished once we have shown (see Proposition \ref{prop-polarization-foliation})
\begin{equation}\label{step3}
W_{e_+}\equiv 0 \quad\text{in $\Omega_{e_+}\quad$ and}\quad W_{e_-}\equiv 0 \quad\text{in $\Omega_{e_-}$.}
\end{equation}
Note that then there must be $i\in\{1,\ldots,m\}$ such that $w_{e_+,i}\equiv 0$ in $\Omega_{e_+}$, because otherwise a similar argumentation as in Step 1 and Step 2 allows to continue rotating the hyperplanes which is a contradiction to the definition of $e_+$. Similarly, there must be $i\in\{1,\ldots,m\}$ such that $w_{e_-,i}\equiv 0$ in $\Omega_{e_-}$. Let $i\in\{1,\ldots,m\}$ such that $w_{e_+,i}\equiv 0$ in $\Omega_{e_+}$ in $\Omega_{e_+}$. And assume there is $j\in\{1,\ldots,m\}$ such that $w_{e_+,j}\not\equiv0$ in $\Omega_{e_+}$. Then as in Step 1 it follows that this is impossible. Thus $W_{e_+}\equiv 0$ and similarly, also $W_{e_-}\equiv 0$. Hence \eqref{step3} holds.

\bigskip

By Proposition \ref{prop-polarization-foliation} and \eqref{step3} it follows that there is $p\in S^{N-1}$ such that $U$ is foliated Schwarz symmetry, since $e_+$ and $e_-$ are clearly in the same two dimensional component of $e$'s in which $U\geq U\circ \sigma_e$ and $e_+\neq e_-$. The fact that $U$ is strictly decreasing in the polar angle now follows, from Step 2 and with Proposition \ref{smp}, we actually have that $W_{e^{\phi}}>0$ in $\Omega_{e^{\phi}}$ for $\phi\in(\phi_-,\phi_+)$. This finishes the proof. 
\end{proof}

\section{An Application}\label{application}

In the following, we consider the case $m=2$. The system~\eqref{eq:system1} is called \textit{of gradient type} if there exists a scalar function $g(|x|,u_1,u_2)$ such that $f_j(|x|,u_1,u_2) = \frac{\partial g}{\partial u_j}(|x|,u_1,u_2)$ for $j = 1,2$ (see \cite[p.~3]{deFigueiredo}). Let us consider the following system:
\begin{equation}\label{eq:system-gradient}
\left\{\ \begin{aligned}
I u_1&= a_1(x) \, u_1+ |u_2|^q \, |u_1|^{q - 2} \, u_1 &&\text{in $\Omega$}\\
I u_2&= a_2(x) \, u_2+|u_1|^q \, |u_2|^{q - 2} \, u_2  && \text{in $\Omega$}\\
u_1&=u_2=0 &&\text{in $\R^N\setminus \Omega$}
\end{aligned}\right.
\end{equation}
where we assume 
\begin{equation}\label{assumption-k3}
\text{There is $c>0$ and $s\in(0,1)$ such that $k_0(r)\geq cr^{-1-2s}$ for $r\in(0,1)$,}
\end{equation}
$\Omega$ is an open bounded set in $\R^N$, $N\geq 2$ with Lipschitz boundary, and
\begin{equation}
\text{$a_1,a_2\in L^{\infty}(\R)$ with $a_1\neq a_2$ and $\|a^+_i\|_{L^{\infty}(\R)}<\lambda_1(\Omega)$ for $i=1,2$,}
\end{equation}
where $\lambda_1(\Omega)>0$ is the first eigenvalue of $I$ (see \eqref{first-eigenvalue}, \eqref{eigenvalue0}).
Moreover, we let $1 < q < \frac{N}{\, N - 2s \,}$ (cf.~\cite[(2.2)]{MMP}). Note that we clearly have $N>2s$ since $s\in(0,1)$ and that the kernel of the fractional Laplacian $(-\Delta)^s$ satisfies \eqref{assumption-k3}. Moreover, the system~\eqref{eq:system-gradient} is of gradient type, with $g$ given by $g(u_1,u_2) = \frac1q \, |u_1 \, u_2|^q$. In the following let $f_j(u_1,u_2) = |u_{3 - j}|^q \, |u_j|^{q - 2} \, u_j$ for $j = 1,2$, then we see immediately that
$$
\frac\partial{\, \partial u_i \,} \, f_j
=
q \, |u_i \, u_j|^{q - 2} \, u_i \, u_j
\quad
\mbox{for $i = 1,2$ and $j = 3 - i$.}
$$
Hence, the system~\eqref{eq:system-gradient} is \textit{weakly coupled} as long as the product $u_1 \, u_2$ is non-negative in~$\Omega$. A  similar system is considered in \cite{MMP} (see also \cite[(4.1)]{DP13}) with a local operator in place of~$I$, and with the bounded set~$\Omega$ replaced by the whole space~$\mathbb R^N$. An existence proof of a pair of non-negative, radially symmetric solutions $u_1,u_2 \ge 0$ satisfying $u_1 + u_2 \not \equiv 0$ in~$\mathbb R^N$ is given there. In the present paper, to keep the argument as transparent as possible, the nonlinearities in~\eqref{eq:system-gradient} are simpler than those in~\cite{MMP}. However, the parameter $\omega$ occurring there is replaced by the function~$a_2(x)$. In order to prove Theorem \ref{thm:main3}, we begin with an existence statement.

\begin{thm}[Existence of non-trivial solutions]\label{thm:gradient}
Let $\Omega\subset\R^N$, $N\geq 2$ be a bounded open set with Lipschitz boundary, assume $k_0$ satisfies \eqref{assumption-k3} for some $s\in(0,1)$, and\/ $1 < q < \frac{N}{\, N - 2s \,}$ . Then system \eqref{eq:system-gradient} has a weak solution $(u,v)$ satisfying $u,v \not \equiv 0$ in~$\Omega$ and $u \not \equiv v$.
\end{thm}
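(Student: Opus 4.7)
The plan is to realize $(u_1, u_2)$ as a non-trivial critical point of the $C^1$ energy functional
\begin{align*}
J(u_1, u_2) := \frac{1}{2}\sum_{i=1}^{2}\Bigl[\cE_k(u_i, u_i) - \int_{\Omega} a_i(|x|)\, u_i^2\, dx\Bigr] - \frac{1}{q}\int_{\Omega}|u_1 u_2|^q\, dx
\end{align*}
on $X := \ccD_k(\Omega)$ (with $m=2$), whose critical points coincide with weak solutions of \eqref{eq:system-gradient} since the system is of gradient type with potential $g = \frac{1}{q}|u_1 u_2|^q$.

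The hypothesis $1 < q < N/(N-2s)$ together with the lower bound in \eqref{assumption-k3} yields the fractional Sobolev embedding $\cD_k(\Omega) \hookrightarrow L^{2q}(\Omega)$, compact because $2q$ is strictly below the critical exponent $2N/(N-2s)$; hence $J \in C^1(X,\R)$. Next, using $\cE_k(w,w) \geq \lambda_1(\Omega)\|w\|_{L^2}^2$ from \eqref{eigenvalue0} together with $\|a_i^+\|_\infty < \lambda_1(\Omega)$, the quadratic part $Q(u_1,u_2) := \tfrac{1}{2}\sum_i[\cE_k(u_i,u_i) - \int a_i u_i^2]$ is equivalent to $\tfrac12\|(u_1,u_2)\|_X^2$, with coercivity constant $\theta := 1 - \max_i \|a_i^+\|_\infty/\lambda_1(\Omega) > 0$. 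Combined with $\int |u_1 u_2|^q \leq \|u_1\|_{L^{2q}}^q \|u_2\|_{L^{2q}}^q \leq C\|(u_1,u_2)\|_X^{2q}$ and the fact $2q > 2$, this produces mountain pass geometry: $J$ admits a positive lower bound $\alpha$ on a small sphere about $0$, while $J(t\phi_1, t\phi_2) \to -\infty$ as $t \to \infty$ for any $\phi_1, \phi_2 \in C^\infty_c(\Omega)$ with $\int |\phi_1 \phi_2|^q > 0$. The Palais-Smale condition follows by a standard argument: PS sequences are bounded via the combination $J(U^n) - \tfrac{1}{2q}\langle J'(U^n), U^n\rangle$ and coercivity of $Q$, and strong convergence is recovered from the compact subcritical embedding.

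The Mountain Pass Theorem therefore yields a critical point $(u,v)$ with $J(u,v) \geq \alpha > 0$. Both components must be non-trivial: if, say, $v \equiv 0$, then $u$ satisfies $Iu = a_1(|x|)u$ weakly in $\Omega$, and testing with $u$ forces $\cE_k(u,u) \leq \|a_1^+\|_\infty \|u\|_{L^2}^2 < \lambda_1(\Omega)\|u\|_{L^2}^2$, contradicting \eqref{eigenvalue0} unless $u \equiv 0$ — but then $J(u,v) = 0$, contradicting $c > 0$. To secure $u \not\equiv v$, I would realize the minimax level as $\min_{\cN} J$ on the Nehari manifold $\cN := \{(u,v) \in X \setminus \{0\} : \langle J'(u,v),(u,v)\rangle = 0\}$, which is invariant under the polarization $(u,v) \mapsto (|u|,|v|)$ — a replacement that does not increase $J$ by Lemma \ref{vk-properties}.3 — so the infimum is attained at a non-negative pair. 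The strong maximum principle (Proposition \ref{smp} applied scalarwise, with Remark \ref{connectedness} when $k_0$ is strictly decreasing) then upgrades $u,v \geq 0$ to $u,v > 0$ in $\Omega$. Were $u \equiv v$, subtracting the two equations would give $(a_1(|x|) - a_2(|x|))\,u(x) = 0$ a.e.\ in $\Omega$, and positivity of $u$ would force $a_1 \equiv a_2$ on $\{|x| : x \in \Omega\}$, contradicting $a_1 \neq a_2$.

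The main obstacle is the final distinctness step: non-triviality of each component alone does not rule out $u = v$, and it is only positivity — secured by the Nehari/polarization reformulation plus the scalar strong maximum principle — that allows the $L^\infty$ distinctness of $a_1$ and $a_2$ to be converted into distinctness of the two components. All other parts of the argument reduce to standard variational and compactness considerations within the functional framework established in Section \ref{sec:preliminaries}.
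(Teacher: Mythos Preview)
Your proposal follows the paper's route almost exactly: the same functional $J$, the same mountain-pass geometry, the same Palais--Smale verification, and the same reason why neither component can vanish (if $v\equiv 0$ then $u$ is a nontrivial first eigenfunction with eigenvalue $\|a_1^+\|_\infty<\lambda_1(\Omega)$, impossible). The only substantive divergence is in the final step $u\not\equiv v$.

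The paper argues directly: if $u_1\equiv u_2$, subtract the two equations to get $(a_1(|x|)-a_2(|x|))\,u_1(x)=0$ a.e., and since $a_1\neq a_2$ this forces $u_1\equiv 0$, contradicting nontriviality. You instead pass through the Nehari characterization $c=\min_{\cN}J$, replace the minimizer by $(|u|,|v|)$, invoke the scalar strong maximum principle to obtain $u,v>0$ in $\Omega$, and only then subtract. Your path is longer --- it folds in what the paper isolates later as Proposition~\ref{sign} and Corollary~\ref{positive} --- but it makes the implication ``$(a_1-a_2)u=0\Rightarrow a_1=a_2$'' immediate, since $u>0$ everywhere. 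The paper's quicker subtraction leaves that implication slightly implicit (from $(a_1-a_2)u_1=0$ one only gets $u_1=0$ on the set where $a_1\neq a_2$, and an extra word is needed to reach $u_1\equiv 0$). So your detour buys a cleaner distinctness argument at the cost of importing the sign analysis early.

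One small imprecision: $\cN$ is not literally invariant under $(u,v)\mapsto(|u|,|v|)$. What is true (and what the paper uses) is that some rescaled pair $(t_0|u|,t_0|v|)$ lies on $\cN$ with $J(t_0|u|,t_0|v|)\le J(u,v)$, and minimality then forces $t_0=1$ and equality; Lemma~\ref{vk-properties}.3 finishes. Your sketch captures the mechanism, but the invariance wording should be adjusted.
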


 The existence proof is based on the mountain-pass theorem (see, for instance, \cite[Theorem~8.2]{AM} or \cite[Chapter~III, Theorem~6.1, p.~109]{Struwe}). More precisely, we consider the functional
\begin{equation}\label{functional-gradient}
J(u,v)
=
\textstyle \frac12 \, \Vert (u,v) \Vert^2
-
\frac1q \, \Vert uv \Vert^q_{L^q(\Omega)}
,
\end{equation}
where we have used the notation $\Vert (u,v) \Vert^2 = \cE_k(u,u) - \int_{\Omega}a_1 \, u^2+a_2 \, v^2 \ dx
+ \cE_k(v,v)$, for shortness. Note that it is easy to see that $\cD_k(\Omega)$ is continuously embedded into $\cH^s_0(\Omega)=\{u\in H^s(\R^N)\;:\; u=0\ \text{on $\R^N\setminus \Omega$}\}$ and hence by the assumption $q < \frac{N}{\, N - 2s \,}$, it follows by the Sobolev embedding that $\cD_k(\Omega)$ is compactly embedded into $L^{2q}(\Omega)$ (see \cite[Theorem~6.7]{DPV}). Hence the product $uv$ belongs to $L^q(\Omega)$, and the functional $J(u,v)$ is well defined on $\ccD_k(\Omega)$. The differential $J'$ at $(u,v)$ is the linear operator~$L$ given by
\begin{align}
\nonumber
L(\varphi,\psi)
&=
\cE_k(u,\varphi)
-
\int_\Omega \big( a_1 \, u \, \varphi+ a_2 \, v \, \psi \big) \, dx
+
\cE_k(v,\psi)
\\
\noalign{\medskip}
\label{L}
&
-
\int_\Omega
|v|^q \, |u|^{q - 2} \, u \, \varphi \, dx
-
\int_\Omega
|u|^q \, |v|^{q - 2} \, v \, \psi \, dx
,
\end{align}
where $(\varphi,\psi)$ ranges in $\ccD_k(\Omega)$. Hence the critical points of~$J$ are the weak solutions of~\eqref{eq:system-gradient}. To apply the mountain pass theorem, we collect in the next section several properties of the Nehari Manifold $\cN$. For general applications of the mountain pass theorem to nonlocal operators, see also \cite{SV12,SV13}.

\subsection{The Nehari manifold}

 In the sequel we refer to the Nehari manifold $\cN$ associated to the functional~$J$. First define the functional
$$
G(u,v)
=
\textstyle \frac12 \, \Vert (u,v) \Vert^2
-
\Vert uv \Vert^q_{L^q(\Omega)}
,
$$
and then let
\begin{align}
\nonumber
\cN
&=
\{\, (u,v) \in \ccD_k(\Omega) \setminus (0,0)
\mid
\mbox{the differential $J'(u,v)$ vanishes in the direction of $(u,v)$}
\,\}
\\
\noalign{\medskip}
\label{Nehari}
&=
\{\, (u,v) \in \ccD_k(\Omega) \setminus (0,0)
\mid
G(u,v) = 0
\,\}
,
\end{align}
where the last equality is readily obtained by letting $(\varphi,\psi) = (u,v)$ in~\eqref{L}. In order to prove the existence of polarized solutions of system~\eqref{eq:system-gradient}, we need

\begin{lemma}\label{lemma:Nehari}
~\begin{enumerate}
\item The Nehari manifold $\cN$ is a $C^1$-manifold of codimension one in\/ $\ccD_k(\Omega)$.
\item If\/ $(u,v)$ belongs to~$\cN$, then the direction of\/ $(u,v)$ is non-tangential to~$\cN$.
\item The manifold $\cN$ keeps far from the origin in the sense that there exists $r_0 > 0$ such that if\/ $\Vert (u,v) \Vert < r_0$ then $(u,v) \not \in \cN$.
\end{enumerate}
\end{lemma}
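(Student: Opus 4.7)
The plan is to derive all three parts from two standard ingredients: the scaling identity for $G$ along the ray $t\mapsto(tu,tv)$, and a Sobolev-type estimate controlling $\|uv\|_{L^q(\Omega)}^q$ by a power of $\|(u,v)\|$. Before starting, I would record that the bilinear form
$$\langle (u,v),(\phi,\psi)\rangle := \cE_k(u,\phi)+\cE_k(v,\psi)-\int_\Omega(a_1 u\phi+a_2 v\psi)\,dx$$
is a scalar product on $\ccD_k(\Omega)$ whose induced norm is equivalent to the standard one, because the assumption $\|a_i^+\|_{L^{\infty}}<\lambda_1(\Omega)$ together with $\|u\|_{L^2(\Omega)}^2\leq \lambda_1(\Omega)^{-1}\cE_k(u,u)$ absorbs the potential terms. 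In particular $G$ and $J$ are of class $C^1$ on $\ccD_k(\Omega)$, with differentials given by formulas analogous to~\eqref{L}.

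For parts (1) and (2) I would evaluate $G'(u,v)$ in the direction of $(u,v)$ by differentiating $t\mapsto G(tu,tv)=\tfrac{t^2}{2}\|(u,v)\|^2 - t^{2q}\|uv\|_{L^q(\Omega)}^q$ at $t=1$, which yields
$$G'(u,v)[(u,v)] \;=\; \|(u,v)\|^2 - 2q\,\|uv\|_{L^q(\Omega)}^q.$$
On $\cN$ the identity $G(u,v)=0$ reads $\|(u,v)\|^2=2\|uv\|_{L^q(\Omega)}^q$, so
$$G'(u,v)[(u,v)] \;=\; 2(1-q)\,\|uv\|_{L^q(\Omega)}^q.$$
Since $q>1$ and $\|uv\|_{L^q(\Omega)}^q>0$ on $\cN$ (otherwise $\|(u,v)\|=0$, contradicting the definition of $\cN$), this quantity is strictly negative. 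This gives (2) at once, as the tangent space $T_{(u,v)}\cN=\ker G'(u,v)$ cannot contain $(u,v)$. It also gives (1): zero is a regular value of $G|_{\ccD_k(\Omega)\setminus\{0\}}$, so by the implicit function theorem $\cN=G^{-1}(0)\setminus\{(0,0)\}$ is a $C^1$-submanifold of codimension one.

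For part (3) I would combine Hölder with Sobolev embedding. Since $1<q<\frac{N}{N-2s}$, the exponent $2q$ lies below the critical Sobolev exponent $\frac{2N}{N-2s}$ for $\cH_0^s(\Omega)$, and the continuous embeddings $\cD_k(\Omega)\hookrightarrow \cH_0^s(\Omega)\hookrightarrow L^{2q}(\Omega)$ (already invoked in the paragraph following Theorem~\ref{thm:gradient}), combined with the norm equivalence above, yield a constant $C>0$ with $\|u\|_{L^{2q}(\Omega)}+\|v\|_{L^{2q}(\Omega)}\leq C\|(u,v)\|$. Cauchy--Schwarz then gives
$$\|uv\|_{L^q(\Omega)}^q \;\leq\; \|u\|_{L^{2q}(\Omega)}^q\,\|v\|_{L^{2q}(\Omega)}^q \;\leq\; C^{2q}\,\|(u,v)\|^{2q},$$
so $G(u,v)\geq \tfrac12\|(u,v)\|^2\bigl(1 - 2C^{2q}\|(u,v)\|^{2q-2}\bigr)$, which is strictly positive for every $(u,v)$ with $0<\|(u,v)\|<r_0:=(2C^{2q})^{-1/(2q-2)}$. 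Since every element of $\cN$ satisfies $G(u,v)=0$, no element of $\cN$ can lie in the open ball of radius $r_0$.

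Overall this is a classical Nehari-manifold setup and I do not expect any real obstacle; the only bookkeeping that deserves care is the norm equivalence used above, which is precisely where the hypothesis $\|a_i^+\|_{L^{\infty}}<\lambda_1(\Omega)$ enters. Without it $\|(u,v)\|$ need not even be a norm, and both the regular-value argument for (1)--(2) and the Sobolev estimate for (3) would break down.
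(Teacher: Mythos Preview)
Your proof is correct and follows essentially the same route as the paper: both compute $G'(u,v)[(u,v)]$ on $\cN$ to obtain a strictly negative value (you write it as $2(1-q)\|uv\|_{L^q}^q$, the paper as $(1-q)\|(u,v)\|^2$, which are equal on $\cN$), and both use the Sobolev embedding $\cD_k(\Omega)\hookrightarrow L^{2q}(\Omega)$ together with H\"older/Cauchy--Schwarz to bound $\|uv\|_{L^q}^q$ by $C\|(u,v)\|^{2q}$ for part~(3). Your explicit remark on the norm equivalence via $\|a_i^+\|_{L^\infty}<\lambda_1(\Omega)$ is a welcome addition that the paper leaves implicit.
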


\begin{proof}
Choose a point $(u_0,v_0) \in \cN_+$, and observe that the product $u_0 \, v_0$ cannot vanish identically (that would be in contrast with~\eqref{Nehari}). In a neighborhood of $(u_0,v_0)$, the Nehari manifold is the set of zeros of the functional $G(u,v)$, whose differential is the linear functional $G'(u,v)$ given by
\begin{align*}
G'(u,v) \colon (\varphi,\psi)\
\mapsto\
\cE_k(u,\varphi)
&-
\int_\Omega \big( a_1 \, u \, \varphi +a_2 \, v \, \psi \big) \, dx
+
\cE_k(v,\psi)
\\
\noalign{\medskip}
&
-
q \int_\Omega
|v|^q \, |u|^{q - 2} \, u \, \varphi \, dx
-
q \int_\Omega
|u|^q \, |v|^{q - 2} \, v \, \psi \, dx
.
\end{align*}
To prove Claim~1 we show that the image of $(\varphi,\psi)$ through $G'(u,v)$ does not vanish for every $(\varphi,\psi)$. This is achieved by letting $(\varphi,\psi) = (u,v)$ and taking into account that $G(u,v) = 0$, which yields
$$
G'(u,v) \colon (u,v) \mapsto
\Vert(u,v)\Vert^2 - 2q \, \Vert uv \Vert_{L^q(\Omega)}^q
=
(1 - q) \, \Vert(u,v)\Vert^2 < 0
.
$$
This implies that $\cN$ is a $C^1$-manifold of codimension~$1$, and the direction of~$(u,v)$ is non-tangential, thus proving Claims 1 and~2 at once. To prove the last claim, observe that by the Poincar\'e inequality and the Sobolev embedding we have
\begin{align*}
\Vert (u,v) \Vert^2&\geq \cE_k(u,u)-\|a_1^+\|_{L^{\infty}(\R)} \, \|u\|_{L^2(\R^)}^2+\cE_k(v,v)-\|a_2^+\|_{L^{\infty}(\R)} \, \|v\|_{L^2(\R^)}^2
\\
\noalign{\medskip}
&\ge
C_0 \, \big(\cE_k(u,u)+\cE_k(v,v)\big)
\ge
C
\,
\big(
\Vert u \Vert^2_{L^q(\Omega)}
+
\Vert v \Vert^2_{L^q(\Omega)}
\big)
\\
\noalign{\medskip}
&
\ge
C_1
\,
\Vert uv \Vert_{L^q(\Omega)}
,
\end{align*}
where $C_0,C,C_1 > 0$ are constants. Hence we may write $\Vert uv \Vert^q_{L^q(\Omega)} \le C_2 \, \Vert (u,v) \Vert^{2q}$, and therefore the inequality:
\begin{equation}
G(u,v)
\ge
\Vert (u,v) \Vert^2
\,
\Big(
\textstyle \frac1{\, 2 \,}
-
C_2
\,
\Vert (u,v) \Vert^{2(q - 1)}
\Big)
\ge
\textstyle \frac1{\, 3 \,}
\,
\Vert (u,v) \Vert^2
\label{positivity}
\end{equation}
holds provided that $\Vert (u,v) \Vert < r_0$ with a conveniently small~$r_0 > 0$. The last claim follows, and the proof is complete.
\end{proof}

\begin{proof}[Proof of Theorem \ref{thm:gradient}]
Let us check that the functional~\eqref{functional-gradient} satisfies the assumptions of the mountain-pass theorem.

\bigskip

\textit{Step 1:} The equality $J(0,0) = 0$ holds, and there exists $r_0 > 0$ such that $J(0,0) > 0$ for all $u,v \in \cD_k(\Omega)$ satisfying $0 < \Vert (u,v) \Vert < r_0$. Indeed, arguing as in the proof of the last claim of Lemma~\ref{lemma:Nehari}, and writing $J$ in place of~$G$, we arrive at $J(u,v) \ge \frac1{\, 3 \,} \, \Vert (u,v) \Vert^2$ for $\Vert (u,v) \Vert < r_0$ (cf.~\eqref{positivity}). The same inequality also shows that $J(u,v) \ge r_0^2/3$ whenever $\Vert (u,v) \Vert = r_0$.

\bigskip

\textit{Step 2:} The functional $J$ is unbounded from below. To see this, fix a pair $(u,v) \in \ccD_k(\Omega)$ satisfying $\Vert uv \Vert_{L^q(\Omega)}^q > 0$ in~$\Omega$. Since for every $t \ge 0$ we have
\begin{equation}\label{unbounded}
J(tu, tv)
=
t^2
\,
\Big(
\textstyle \frac1{\, 2 \,}
\,
\Vert (u,v) \Vert^2
-
\frac{\, t^{2(q - 1)} \,}q
\,
\Vert uv \Vert^q_{L^q(\Omega)}
\Big)
,
\end{equation}
we see that $J(tu,tv) \to -\infty$ as $t \to \infty$, hence $J$ is unbounded from below, as claimed.

\bigskip\goodbreak

\textit{Step 3:} The last condition needed to apply the mountain-pass theorem is the Palais-Smale compactness condition. More precisely, assume that a sequence of pairs $(u_i,v_i) \in \ccD_k(\Omega)$ satisfies $J(u_i, v_i) \to c \in (0,\infty)$ as $i\to\infty$ in the Euclidean topology of the real line, as well as $J'(u_i,v_i) \to 0$ as $i\to\infty$ in the strong topology of the dual space $(\ccD_k(\Omega))'$. Then we have to prove the existence of a strongly convergent subsequence in $\ccD_k(\Omega)$. To this purpose, observe that the differential $J'$ at the point $(u_i,v_i)$ is the linear functional $L_i(\varphi,\psi)$ given by
\begin{align}
\label{differential}
L_i(\varphi,\psi)
=
\cE_k(u_i,\varphi)
&-
\int_\Omega \big( a_1 \, u_i \, \varphi +a_2 \, v_i\, \psi \big) \, dx
+
\cE_k(v_i,\psi)
\\
\noalign{\medskip}
&
-
\int_\Omega
|v_i|^q \, |u_i|^{q - 2} \, u_i \, \varphi \, dx
-
\int_\Omega
|u_i|^q \, |v_i|^{q - 2} \, v_i \, \psi \, dx
,
\end{align}
where $(\varphi,\psi)$ ranges in $\ccD_k(\Omega) \subset (L^{2q}(\Omega))^2$. In the special case when $(\varphi,\psi) = (u_i,v_i)$ we find $L_i(\phi,\psi) = \Vert (u_i,v_i) \Vert^2 - 2 \, \Vert uv \Vert^q_{L^q(\Omega)}$, and hence
\begin{equation}\label{expansion1}
2 \, \Vert uv \Vert^q_{L^q(\Omega)}
=
\Vert (u_i,v_i) \Vert^2
-
L_i(u_i,v_i)
.
\end{equation}
Let us combine the equality above with the assumption that $J'(u_i,v_i) \to 0$ as $i\to\infty$ strongly. Such an assumption implies $L_i(u_i,v_i) = o(1) \, \Vert (u_i,v_i) \Vert$ as $i\to\infty$: by plugging this into~\eqref{expansion1} we obtain
\begin{equation}\label{expansion2}
\Vert u_i \, v_i \Vert^q_{L^q(\Omega)}
=
\textstyle \frac1{\, 2 \,}
\,
\Vert (u_i,v_i) \Vert^2
+
o(1)
\,
\Vert (u_i,v_i) \Vert
\quad\text{as $i\to\infty$.}
\end{equation}
Now we are ready to prove the existence of a strongly convergent subsequence. As usual, the proof is divided into two parts.

\bigskip

\textit{Part $i$:} The sequence $(u_i,v_i)$ is bounded. Indeed, if we assume $\Vert (u_i,v_i) \Vert \to \allowbreak \infty$ for $i\to\infty$, then we reach a contradiction by the following argument. Taking~\eqref{expansion2} into account, we have
$$
J(u_i,v_i)
=
\textstyle \frac1{\, 2 \,}
\,
(1 - \frac1{q})
\,
\Vert (u_i,v_i) \Vert^2
+
o(1)
\,
\Vert (u_i,v_i) \Vert
\to \infty
,
$$
which contradicts the assumption $J(u_i,v_i) \to c < \infty$ for $i\to\infty$. Hence the sequence $(u_i,v_i)$ must be bounded, as claimed.

\bigskip\goodbreak

\textit{Part $ii$:} Once we know that the sequence $(u_i,v_i)$ is bounded in $\ccD_k(\Omega)$, the proof of the existence of a strongly converging subsequence is standard: see \cite[p.~125]{AM} and \cite[Proposition~2.2]{Struwe}. To be more precise, by the weak compactness theorem in Hilbert spaces there exists a subsequence, still denoted by $(u_i,v_i)$, weakly convergent to some $(u,v) \in \ccD_k(\Omega)$. Furthermore, since $q < \frac{N}{\, N - 2s \,}$, the set $\cD_k(\Omega)\subset \cH^s_0(\Omega)$ is compactly embedded in the Lebesgue space~$L^{2q}(\Omega)$, hence we may assume that when $i \to \infty$ the sequences $(u_i),(v_i)$ converge to $u,v$, respectively, strongly in $L^{2q}(\Omega)$, and therefore $\Vert u_i \, v_i \Vert^q_{L^q(\Omega)} \to \Vert uv \Vert^q_{L^q(\Omega)}$. This and~\eqref{expansion2}, taking the boundedness of the sequence $(u_i,v_i)$ into account, imply
\begin{equation}\label{limit}
\lim_{i \to \infty}
\Vert (u_i,v_i) \Vert^2
=
2 \, \Vert uv \Vert^q_{L^q(\Omega)}
.
\end{equation}
Consider the functional $L(\varphi,\psi)$ in~\eqref{L}. Taking~\eqref{differential} into account, and since $(u_i,v_i) \rightharpoonup (u,v)$ weakly in $\ccD_k(\Omega)$, and $(u_i,v_i) \to (u,v)$ strongly in $(L^{2q}(\Omega))^2$, we deduce
$$
\lim_{i \to \infty}
(L_i - L)(\varphi,\psi)
=
0
$$
for every $(\varphi,\psi) \in \ccD_k(\Omega)$: thus, we have proved the weak-$*$ convergence $L_i \stackrel{*}{\rightharpoonup} L$. But since $L_i \to 0$ strongly by assumption, we must have $L = 0$. In particular, $(u,v) \in \cN$. By comparing~\eqref{Nehari} with~\eqref{limit} we deduce
$$
\lim_{i \to \infty}
\Vert (u_i,v_i) \Vert
=
\Vert (u,v) \Vert
.
$$
Finally, by recalling that the weak convergence in a Hilbert space together with the convergence of the norms to the norm of the limiting function implies the strong convergence, we conclude that $(u_i,v_i) \to (u,v)$ strongly in $\ccD_k(\Omega)$, which completes the proof of the Palais-Smale compactness condition.

\bigskip

At this point the mountain-pass theorem implies the existence of a critical point $(u,v) \ne (0,0)$ of the functional~$J$, which is therefore a weak solution $(u_1,u_2) = (u,v)$ of the system~\eqref{eq:system-gradient}. By the mountain-pass theorem we also know that the two identities $u_1 \equiv 0$ and $u_2 \equiv 0$ cannot hold at once, but we may, in principle, have $u_2 \equiv 0$. However, if $u_2$ vanishes identically, then system~\eqref{eq:system-gradient} implies $I u_1 = a_1 \, u_1$ in~$\Omega$, $u_1 = 0$ in $\mathbb R^N \setminus \Omega$, hence $u_1$ should also vanish identically by unique solvability and the maximum principle, a contradiction. A similar argument shows that $u_1 \not \equiv 0$, hence $u_1,u_2 \not \equiv 0$ in~$\Omega$. Finally, if $u_1 \equiv u_2$ in~\eqref{eq:system-gradient}, then by comparing the two equations ---recall $a_1\neq a_2$--- we obtain $u_1 \equiv 0$, which has been just excluded. Hence $u_1,u_2$ are distinct functions, and the proof is complete.
\end{proof}

\subsection{Positivity}

Let us now turn to show that the solutions $u,v$ obtained so far do not change sign. To this aim we need to define the set of paths $\Gamma = \{\, \gamma \in C^0([0,1], \, \ccD_k(\Omega)) \mid \gamma(0) = 0,\ J(\gamma(1)) < 0 \,\}$ and the two infima
$$
c
=
\inf_{\gamma \in \Gamma}
\max_{t \in [0,1]}
J(\gamma(t)),
\qquad
\qquad
c_\cN
=
\inf_{(u,v) \in \cN}
J(u,v)
.
$$
\begin{lemma}\label{coincide}
The two values $c,c_\cN$ defined above are positive and coincide.
\end{lemma}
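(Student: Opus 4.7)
The plan is to verify the three classical assertions one expects in any mountain-pass + Nehari setup: both values are strictly positive, $c \le c_{\cN}$ via paths of the form $t \mapsto (Ttu, Ttv)$, and $c_{\cN} \le c$ via an intersection argument showing that every admissible path must cross~$\cN$.

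\textbf{Positivity.} I would first observe that on~$\cN$ the identity $G(u,v) = 0$ reads $\|uv\|_{L^q(\Omega)}^q = \tfrac{1}{2}\|(u,v)\|^2$, so
\[
J(u,v) \;=\; \tfrac{1}{2}\|(u,v)\|^2 - \tfrac{1}{q}\|uv\|_{L^q(\Omega)}^q \;=\; \tfrac{q-1}{2q}\,\|(u,v)\|^2 \quad\text{on }\cN.
\]
Combined with Lemma~\ref{lemma:Nehari}.3, which guarantees $\|(u,v)\| \ge r_0$ on~$\cN$, this yields $c_{\cN} \ge \tfrac{q-1}{2q}r_0^2 > 0$. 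For $c$, I would re-use the estimate from Step~1 of the proof of Theorem~\ref{thm:gradient} (essentially~\eqref{positivity} applied to~$J$), giving $J(u,v) \ge \tfrac{1}{3}\|(u,v)\|^2$ on the sphere $\|(u,v)\| = r_0$; since every $\gamma \in \Gamma$ starts at the origin and ends in $\{J < 0\}$, it must cross this sphere, hence $c \ge r_0^2/3 > 0$.

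\textbf{Inequality $c \le c_{\cN}$.} For fixed $(u,v) \in \cN$, a direct computation from~\eqref{unbounded} shows that the function $t \mapsto J(tu,tv)$ attains its unique positive maximum at $t = 1$ (equivalently, the Nehari identity $\|(u,v)\|^2 = 2\|uv\|_{L^q(\Omega)}^q$ makes the critical-point equation $t^{2q-2}=1$ give $t=1$), and tends to $-\infty$ as $t \to \infty$. I would then choose $T \ge 1$ so large that $J(Tu,Tv) < 0$ and take $\gamma(t) := (Ttu, Tt v)$ as an element of $\Gamma$; by construction $\max_{t\in[0,1]} J(\gamma(t)) = J(u,v)$, which gives $c \le J(u,v)$, and taking the infimum over $\cN$ yields $c \le c_{\cN}$.

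\textbf{Inequality $c_{\cN} \le c$.} The core step is to show that every $\gamma \in \Gamma$ intersects~$\cN$. Consider the continuous function $h(t) := G(\gamma(t))$ on $[0,1]$. Near $t=0$, continuity and $\gamma(0)=0$ give $\|\gamma(t)\| < r_0$, and then~\eqref{positivity} (applied to $G$) forces $h(t) \ge \tfrac{1}{3}\|\gamma(t)\|^2 \ge 0$. On the other hand, since $J - G = \tfrac{q-1}{q}\|uv\|_{L^q(\Omega)}^q \ge 0$, the endpoint condition $J(\gamma(1)) < 0$ implies $h(1) < 0$. Define
\[
t^{*} := \sup\{\, t \in [0,1] : h(s) \ge 0 \text{ for all } s\in[0,t] \,\}.
\]
Continuity and the definition of the supremum force $h(t^{*}) = 0$, and the same small-norm bound rules out $\gamma(t^{*}) = 0$ (otherwise a sequence $t_n \searrow t^{*}$ with $h(t_n) < 0$ would have $\|\gamma(t_n)\| < r_0$, contradicting $h \ge 0$ on that ball). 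Hence $\gamma(t^{*}) \in \cN$, so $\max_{t\in[0,1]} J(\gamma(t)) \ge J(\gamma(t^{*})) \ge c_{\cN}$, and taking the infimum over $\Gamma$ gives $c \ge c_{\cN}$.

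The only delicate point I anticipate is the last step of ruling out $\gamma(t^{*}) = 0$: one has to argue by continuity together with the sign information on~$G$ in a small ball around the origin; everything else is algebraic manipulation on the Nehari identity or a direct application of Lemma~\ref{lemma:Nehari} and the estimates already established in the proof of Theorem~\ref{thm:gradient}.
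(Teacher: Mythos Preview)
Your argument is correct. The direction $c \le c_{\cN}$ matches the paper's reasoning (ray paths, maximum of $t\mapsto J(tu,tv)$ at $t=1$), and your separate positivity estimates are implicit in the paper rather than stated apart. The genuine difference is in the reverse inequality $c_{\cN}\le c$: you give a direct intersection argument, applying the intermediate-value theorem to $t\mapsto G(\gamma(t))$ and using the small-norm lower bound~\eqref{positivity} to rule out $\gamma(t^*)=0$, so that every admissible path meets~$\cN$ and hence $\max_t J(\gamma(t))\ge c_{\cN}$. The paper instead invokes Theorem~\ref{thm:gradient}, which has already produced (via the mountain-pass theorem and the Palais--Smale condition) a nontrivial critical point $(u,v)$ with $J(u,v)=c$; since such a point lies on~$\cN$ by definition, $c_{\cN}\le c$ follows in one line. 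Your route is self-contained and does not rely on the compactness machinery behind Theorem~\ref{thm:gradient}; the paper's route is shorter precisely because that existence result is already available.
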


\begin{proof}
The argument is similar to~\cite[Theorem~4.2]{Willem} (for a scalar equation) and \cite[Lemma~3.2]{MMP} for a system of local equations. Let us verify that $c \le c_\cN$. Take $(u,v) \in \cN$ and observe that $\Vert uv \Vert_{L^q(\Omega)}^q > 0$, otherwise we would reach a contradiction with~\eqref{Nehari}. Then \eqref{unbounded} applies, and the path $\gamma(t) = (tu,tv)$, $t \in [0,\infty)$, starts from the origin and satisfies $\lim\limits_{t \to \infty} J(\gamma(t)) = -\infty$. Of course, we may find a reparametrization such that $J(\gamma(1)) < 0$, but we prefer to avoid unnecessary technicalities.
\begin{figure}[ht]
\centering
\includegraphics{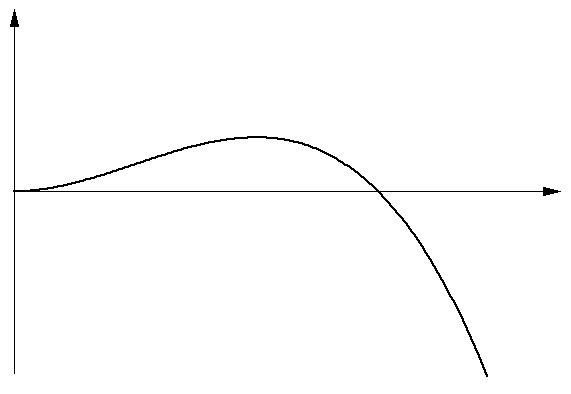}
\caption{The function $f(t)$}
\label{fig:1}
\end{figure}
Taking~\eqref{Nehari} into account, a straightforward computation shows that the real-valued function $f(t) = J(\gamma(t))$ of the real variable~$t > 0$ (whose graph is outlined in Figure~\ref{fig:1}) satisfies $f'(1) = 0$. Furthermore, $f$ attains its maximum (which is positive) at $t = 1$ and hence $c \le \max\limits_{t \ge 0} J(\gamma(t)) = J(u,v)$. Since $(u,v) \in \cN$ is arbitrary, we may write $c \le c_\cN$. To prove the converse, recall that by Theorem~\ref{thm:gradient} there exists $(u,v) \ne (0,0)$ such that $J'(u,v) = 0$ and $J(u,v) = c > 0$ (this is a by-product of the mountain-pass theorem). But then $(u,v) \in \cN$ and therefore $c_\cN \le J(u,v) = c$. The lemma follows.
\end{proof}

\begin{prop}\label{sign}
The two functions $u,v$ obtained by Theorem~\ref{thm:gradient} do not change sign.
\end{prop}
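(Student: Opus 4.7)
The plan is to exploit the characterisation $J(u,v)=c=c_{\cN}$ from Lemma~\ref{coincide} by comparing the critical point $(u,v)$ with the pair of absolute values $(|u|,|v|)$. Because $|uv|=|u|\,|v|$, the coupling term is invariant: $\||u|\,|v|\|_{L^q(\Omega)}^q=\|uv\|_{L^q(\Omega)}^q$. For the quadratic part, $\int_\Omega a_i u^2\,dx=\int_\Omega a_i|u|^2\,dx$, and Lemma~\ref{vk-properties}.3 yields $\cE_k(|u|,|u|)\le\cE_k(u,u)$ together with the analogous bound for~$v$. Summing these four contributions, $\|(|u|,|v|)\|^2\le\|(u,v)\|^2$, and therefore $G(|u|,|v|)\le G(u,v)=0$.

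Next I would rescale $(|u|,|v|)$ back onto $\cN$. Note that $\|uv\|_{L^q(\Omega)}^q>0$, since otherwise $G(u,v)=0$ would force $(u,v)=(0,0)$, contradicting Theorem~\ref{thm:gradient}; moreover the Poincar\'e--Sobolev chain in the proof of Lemma~\ref{lemma:Nehari} applies equally to $(|u|,|v|)$, so $\|(|u|,|v|)\|^2>0$. Hence the scalar equation $G(t|u|,t|v|)=0$ has a unique positive root
\[
t^{2(q-1)}=\frac{\|(|u|,|v|)\|^2}{2\,\|uv\|_{L^q(\Omega)}^q}\le\frac{\|(u,v)\|^2}{2\,\|uv\|_{L^q(\Omega)}^q}=1,
\]
so $t\in(0,1]$ and $(t|u|,t|v|)\in\cN$. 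Using the identity $J=\tfrac{q-1}{2q}\,\|\cdot\|^2$ valid on $\cN$ (an immediate consequence of $G=0$), one obtains
\[
c_\cN\le J(t|u|,t|v|)=\tfrac{q-1}{2q}\,t^2\,\|(|u|,|v|)\|^2\le\tfrac{q-1}{2q}\,\|(u,v)\|^2=J(u,v)=c_\cN.
\]

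Equality must therefore hold throughout, which forces $t=1$ together with $\cE_k(|u|,|u|)=\cE_k(u,u)$ and $\cE_k(|v|,|v|)=\cE_k(v,v)$. At this point the strict monotonicity of $k_0$ assumed in Theorem~\ref{thm:main3} is decisive: by the additional assertion of Lemma~\ref{vk-properties}.3, those equalities imply $u=u^+$ or $u=u^-$ a.e.\ in~$\R^N$, and likewise for~$v$. Hence neither $u$ nor $v$ changes sign. I expect the main obstacle to be the bookkeeping of the rescaling argument and the verification $t\le 1$; the strict monotonicity of $k_0$ only enters in the final step, precisely to promote the equality of bilinear forms into sign constancy.
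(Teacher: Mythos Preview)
Your proof is correct and follows essentially the same strategy as the paper: compare the mountain-pass critical point $(u,v)$ with $(|u|,|v|)$ via the Nehari manifold, use $c=c_{\cN}$ from Lemma~\ref{coincide} to force equality in $\cE_k(|u|,|u|)\le\cE_k(u,u)$ (and for~$v$), and conclude via the equality case in Lemma~\ref{vk-properties}.3 under strict monotonicity of~$k_0$. The only cosmetic difference is that you compute the rescaling factor~$t$ and the identity $J=\tfrac{q-1}{2q}\|\cdot\|^2$ on~$\cN$ explicitly, whereas the paper argues through the shape of $t\mapsto J(tu,tv)$ and the chain $J(u,v)\le J(t_0|u|,t_0|v|)\le J(t_0u,t_0v)\le J(u,v)$.
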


\begin{proof}
The argument is based on the combination of three inequalities:
\begin{enumerate}
\item Since $(u,v)$ is a critical point of the functional~$J$, we have $G(u,v) = 0$ (see~\eqref{Nehari}), hence the function $f(t) = J(tu, \allowbreak \, tv) = \frac{\, t^2 \,}{\, 2 \,} \, \Vert (u,v) \Vert^2 - \frac{\, t^{2q} \,}{\, q \,} \, \Vert uv \Vert_{L^q(\Omega)}^q$ satisfies $f'(1) = 0$. An elementary computation shows that
$$
f(t) \le f(1) = J(u,v)
$$
for all $t > 0$, with equality if and only if $t = 1$ (the graph of $f$ is outlined in Figure~\ref{fig:1}).
\item Since the graph of $g(t) = G(t|u|, \, t|v|) = \frac{\, t^2 \,}{\, 2 \,} \, \Vert (|u|, \allowbreak |v|) \Vert^2 - t^{2q} \, \Vert uv \Vert_{L^q(\Omega)}^q$ has the same shape as the one of~$f$, there exists $t_0 > 0$ such that $g(t_0) = 0$. Then, by~\eqref{Nehari} we have $(t_0 |u|, \allowbreak \, t_0 |v|) \in \cN$, and by Lemma~\ref{coincide} we get
$$
J(u,v) = c \le J(t_0 |u|, \allowbreak \, t_0 |v|)
.
$$
\item Using Lemma \ref{vk-properties} we obtain $J(t_0 |u|, \, t_0 |v|) \le J(t_0\, u, \, t_0 v) = f(t_0)$.
\end{enumerate}
\goodbreak
In conclusion, we arrive at $J(t_0 |u|, \, t_0 |v|) = f(t_0) = J(u,v)$, whence we deduce that $t_0 = 1$ and $J(|u|,|v|) = J(u,v)$. This and Lemma \ref{vk-properties} imply that either $u^+$ or $u^-$ vanishes almost everywhere, and either $v^+$ or $v^-$ vanishes almost everywhere. The claim follows.
\end{proof}

\begin{cor}[Existence of positive solutions]\label{positive}
Let $\Omega\subset\R^N$, $N\geq 2$ be a bounded open set with Lipschitz boundary, assume $k_0$ satisfies \eqref{assumption-k3} for some $s\in(0,1)$, and $1 < q < \frac{N}{\, N - 2s \,}$ . Then system \eqref{eq:system-gradient} has a weak solution $(u_1,u_2)$ satisfying $u_1,u_2> 0$ in~$\Omega$ and $u_1 \not \equiv u_2$.
\end{cor}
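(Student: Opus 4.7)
The plan is to combine Theorem~\ref{thm:gradient}, the sign-symmetry of the system~\eqref{eq:system-gradient}, and the scalar strong maximum principle. Let $(u,v)\in\ccD_k(\Omega)$ be the mountain-pass solution produced by Theorem~\ref{thm:gradient}, so that $u,v\not\equiv 0$ in $\Omega$, and invoke Proposition~\ref{sign} to ensure that each of $u,v$ has constant sign in $\Omega$. Because $I$ is linear and the nonlinearity $|u_j|^q|u_i|^{q-2}u_i$ is odd in $u_i$ while depending on $u_j$ only through $|u_j|$, the system~\eqref{eq:system-gradient} is invariant under the independent sign flips $u\mapsto -u$ and $v\mapsto -v$. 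Replacing $u$ by $|u|$ and $v$ by $|v|$ therefore yields a weak solution $(u_1,u_2)$ of~\eqref{eq:system-gradient} with $u_1,u_2\ge 0$ and $u_1,u_2\not\equiv 0$ in $\Omega$.

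Next I would apply the scalar strong maximum principle to each equation separately. Since $u_2^q u_1^{q-1}\ge 0$ in $\Omega$, the first equation gives
\[
Iu_1\ge a_1(|x|)\,u_1\quad\text{in }\Omega,\qquad u_1\equiv 0\text{ on }\R^N\setminus\Omega,\qquad u_1\gneq 0,
\]
with $a_1(|\cdot|)\in L^\infty(\Omega)$. Applying Proposition~\ref{smp} in its scalar, non-antisymmetric form (see Remark~\ref{nonantisymmetric}) yields $u_1>0$ in $\Omega$, and the same argument with the roles of the components exchanged gives $u_2>0$ in $\Omega$.

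Finally, to rule out $u_1\equiv u_2$ I would argue by contradiction: if $u_1\equiv u_2=:w$ in $\Omega$, subtracting the two equations of~\eqref{eq:system-gradient} makes the identical nonlinear terms cancel, leaving $(a_1(|x|)-a_2(|x|))\,w\equiv 0$ in $\Omega$. Since $w>0$ in $\Omega$ by the previous step, this forces $a_1(|x|)=a_2(|x|)$ for a.e.\ $x\in\Omega$, contradicting the hypothesis $a_1\neq a_2$. The only real subtlety I anticipate is verifying the sign-flip invariance of the weak formulation---this is routine, since the sign of $u_j$ is absorbed by $|u_j|^q$ while the sign of $u_i$ matches that of $Iu_i$---together with interpreting $a_1\neq a_2$ as a difference on a subset of positive measure of $\{|x|:x\in\Omega\}$, which is the natural reading given that only these values of $a_i$ enter~\eqref{eq:system-gradient}.
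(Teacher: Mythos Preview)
Your proposal is correct and follows essentially the same route as the paper: take the mountain-pass solution from Theorem~\ref{thm:gradient}, use Proposition~\ref{sign} together with the sign-flip invariance of~\eqref{eq:system-gradient} to pass to $(|u|,|v|)$, then apply the scalar strong maximum principle (Proposition~\ref{smp} via Remark~\ref{nonantisymmetric}) to the uncoupled inequalities $Iu_j\ge a_j u_j$. The only cosmetic difference is that the paper cites Theorem~\ref{thm:gradient} directly for $u_1\not\equiv u_2$ (the argument there is exactly your subtraction argument), whereas you reprove it for the positive pair; your remark on how $a_1\neq a_2$ should be interpreted on the radii occurring in~$\Omega$ is a fair observation that the paper leaves implicit.
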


\begin{proof}
Consider the non-negative functions $u_1 = |u|$ and $u_2 = |v|$, where $(u,v)$ is the weak solution whose existence follows from Theorem~\ref{thm:gradient}. In view of Proposition~\ref{sign}, we must have either $u_1 = u$ or $u_1 = -u$, and either $u_2 = v$ or $u_2 = -v$. Therefore the pair $(u_1,u_2)$ satisfies~\eqref{eq:system-gradient}. But then $(u_1,u_2)$ also satisfies the system of uncoupled inequalities
$$
\left\{\ \begin{aligned}
Iu_1&\geq a_1(x) \, u_1 &&\text{in $\Omega$}\\
Iu_2&\ge a_2(x) \, u_2 &&\text{in $\Omega$}\\
u_1=u_2&=0 &&\text{in $\R^N\setminus \Omega$}
\end{aligned}\right.
$$
By the strong maximum principle (see Proposition \ref{smp} and Remark \ref{nonantisymmetric}) we have that for each $j = 1,2$ either $u_j > 0$ in~$\Omega$ or $u_j \equiv 0$ in $\mathbb R^N$, and the conclusion follows from Theorem~\ref{thm:gradient}.
\end{proof}

\begin{remark}
$(i)$~Since the weak solution $(u_1,u_2)$ whose existence is asserted by Corollary~\ref{positive} minimizes the functional~$J$ over the Nehari manifold~$\cN \!$, we say that $(u_1,u_2)$ is a \emph{ground state}.\\
$(ii)$~The pair $(-u_1, u_2)$ is also a weak solution, as well as $(u_1, -u_2)$ and $(-u_1,-u_2)$: the assertion follows by replacing $u_j$ in~\eqref{eq:system-gradient} with $\pm u_j$, $j = 1,2$.
\end{remark}

\subsection{Polarized solutions}

The main result in this paragraph states that if $\Omega$ is symmetric, then system~\eqref{eq:system-gradient} admits a solution made up of two polarized functions. Before proceeding further, observe that in our notation we may write $u_{\sigma_H(H)}(x) = u_H(\sigma_H(x))$. Let us describe the effect of polarization on the functionals $J$ and~$G$:

\begin{lemma}[Functionals reduced by polarization]\label{reduces-functionals}
Let\/ $\Omega$ be a bounded open set in\/~$\mathbb R^N$, symmetric with respect to the hyperplane $\partial H$ for some half-space~$H$. Moreover, assume that $a_i$ is symmetric with respect to $\partial H$ for $i=1,2$. For every $(u,v) \in \ccD_k(\Omega)$ satisfying $u,v \ge 0$ in\/~$\mathbb R^N$ we have
\begin{equation}\label{reduces}
J(u_H,v_H) \le J(u,v)
.
\end{equation}
Furthermore, if $k_0$ is strictly decreasing and $J(u_H,v_H) = J(u,v)$ then:
\begin{enumerate}
\item either $u = u_H$ or $u = u_{\sigma_H(H)}$;
\item either $v = v_H$ or $v = v_{\sigma_H(H)}$;
\item inequality~\eqref{product:non-negative} is satisfied.
\end{enumerate}
The lemma also holds with $G$ in place of\/~$J$.
\end{lemma}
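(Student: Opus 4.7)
The plan is to split the functional $J(u,v)$ into three kinds of contributions and track how each behaves under polarization: the two nonlocal energies $\cE_k(u,u)$ and $\cE_k(v,v)$, the lower-order term $\int_\Omega (a_1 u^2 + a_2 v^2)\,dx$, and the coupling term $\|uv\|_{L^q(\Omega)}^q$. Polarization preserves the distribution of each function, so $\int_\Omega u_H^2\,dx = \int_\Omega u^2\,dx$; and since both $\Omega$ and $a_i$ are symmetric with respect to $\partial H$, the two-point swap that defines polarization leaves $a_i u^2$ invariant orbit-by-orbit, so $\int_\Omega a_i u_H^2\,dx = \int_\Omega a_i u^2\,dx$. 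For the nonlocal energies, I would use the by now standard polarization identity
\[
\cE_k(u,u) - \cE_k(u_H, u_H)
=
\int_H\!\int_H [k(x-y) - k(x-\sigma_H(y))]\,\Delta(x,y)\,dx\,dy \geq 0,
\]
where the integrand $\Delta(x,y)$ is a pointwise non-negative expression arising from the four-point swap, and the kernel factor is non-negative because $k_0$ is decreasing. The same inequality holds for $v$.

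For the coupling integral, the key is supermodularity of $F(a,b) = (ab)^q$ on $[0,\infty)^2$: since $\partial_a \partial_b F = q^2 (ab)^{q-1} \geq 0$, one has the two-point inequality
\[
F(a_1,b_1) + F(a_2,b_2) \leq F(\max(a_1,a_2), \max(b_1,b_2)) + F(\min(a_1,a_2), \min(b_1,b_2))
\]
for all $a_i, b_i \geq 0$. Applied orbit-by-orbit to the pairs $(u(x),v(x))$ and $(u(\sigma_H(x)), v(\sigma_H(x)))$ with $x \in H$ and integrated, this gives $\int_\Omega (u_H v_H)^q\,dx \geq \int_\Omega (uv)^q\,dx$. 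Combining the three estimates yields $J(u_H, v_H) \leq J(u,v)$, which is \eqref{reduces}. The same proof works verbatim with $G$ in place of $J$, since the only difference is the constant multiplying the coupling term.

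For the equality case, assume $k_0$ is strictly decreasing and $J(u_H,v_H) = J(u,v)$. Since all three contributions moved in a single direction, each must be an equality separately. The strict monotonicity of $k_0$ gives $k(x-y) > k(x-\sigma_H(y))$ for a.e.\ $x,y \in H$, so equality in the nonlocal energy forces $\Delta \equiv 0$ pointwise; a two-point analysis in the spirit of the proof of Lemma \ref{vk-properties}.3 in \cite{JW16} then yields either $u = u_H$ or $u = u_{\sigma_H(H)}$ a.e., and likewise for $v$, establishing claims (1) and (2). Equality in the supermodular inequality at each orbit then pins down the \emph{orientation} of the pair $(u,v)$ across $\partial H$: at points $x$ where neither $u(x)-u(\sigma_H(x))$ nor $v(x)-v(\sigma_H(x))$ vanishes, the two differences must carry the same sign, which is exactly the pointwise content of \eqref{product:non-negative}.

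The main obstacle I anticipate is this last equality analysis of the coupling term: items (1) and (2) leave open four admissible combinations of orientations for $(u,v)$, and one must check carefully that only those compatible with $(u-u\circ\sigma_H)(v-v\circ\sigma_H) \geq 0$ a.e.\ survive. This amounts to a clean two-point case distinction: when $a_1 \geq a_2$ and $b_1 \geq b_2$ (or both reversed) the supermodular inequality is already an equality, while in the opposite arrangement $\partial_a \partial_b F > 0$ on $(0,\infty)^2$ makes the inequality strict unless one of the differences vanishes. Once this dichotomy is spelled out, assembling (1)--(3) is routine.
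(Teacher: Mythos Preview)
Your proposal is correct and follows essentially the same approach as the paper: the paper also decomposes $J$ into the two energies, the lower-order term (handled via Cavalieri and the symmetry of $a_i$), and the coupling norm, invoking Proposition~\ref{reduces-energy} for $\cE_k(u_H,u_H)\le\cE_k(u,u)$ and Proposition~\ref{prop:increases} for $\|u_Hv_H\|_{L^q}\ge\|uv\|_{L^q}$, and then reads off the equality case term by term exactly as you do. Your supermodularity argument for the coupling term is just a repackaging of the two-point rearrangement inequality the paper uses in Proposition~\ref{prop:increases}, and your anticipated ``obstacle'' in the last paragraph is not a real issue: claim~(3) is precisely the equality characterization in Proposition~\ref{prop:increases}, obtained directly from the pointwise dichotomy you describe, with no need to sort through the four orientation combinations.
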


\goodbreak

\begin{proof}
By Proposition~\ref{reduces-energy} we have $\cE_k(u_H,u_H) \le \cE_k(u,u)$ and $\cE_k(v_H,v_H) \le \cE_k(v,v)$. Furthermore, by Cavalieri principle and since due to the symmetry of $a_1,a_2$ we also have $ \int_{\Omega} a_1 \, u^2\ dx= \int_{\Omega} (a_1)_H \, (u_H)^2\ dx$ and  $ \int_{\Omega} a_2 \, v^2\ dx= \int_{\Omega} (a_2)_H \, (v_H)^2\ dx$. This and Proposition~\ref{prop:increases} prove~\eqref{reduces}. Now suppose that \eqref{reduces} holds with equality. We may write
\begin{align*}
0 = J(u,v) - J(u_H,v_H)
&=
\big( \cE_k(u,u) - \cE_k(u_H,u_H) \big)
+
\big( \cE_k(v,v) - \cE_k(v_H,v_H) \big)
\\
\noalign{\medskip}
&-
\textstyle\frac1{\, q \,}
\big(
\Vert uv \Vert_{L^q(\Omega)}^q
-
\Vert u_H \, v_H \Vert_{L^q(\Omega)}^q
\big)
\end{align*}
hence
$$
\big( \cE_k(u,u) - \cE_k(u_H,u_H) \big)
+
\big( \cE_k(v,v) - \cE_k(v_H,v_H) \big)
=
\textstyle\frac1{\, q \,}
\big(
\Vert uv \Vert_{L^q(\Omega)}^q
-
\Vert u_H \, v_H \Vert_{L^q(\Omega)}^q
\big)
\le
0
,
$$
where the last inequality follows from Proposition~\ref{prop:increases}. Since the right-hand side cannot be negative by Proposition~\ref{reduces-energy}, it must vanish. But then Proposition~\ref{reduces-energy} implies that either $u = u_H$ or $u = u_{\sigma_H(H)}$, and either $v = v_H$ or $v = v_{\sigma_H(H)}$, as claimed, and  Proposition~\ref{prop:increases} implies that \eqref{product:non-negative} holds. The argument obviously applies to the functional~$G$ as well.
\end{proof}

\begin{remark}
When  $u_H = u \ne u_{\sigma_H(H)}$ and $v_H \ne v = v_{\sigma_H(H)}$, and the product in~\eqref{product:non-negative} vanishes almost everywhere in~$\Omega$, the equality holds in\/~\eqref{reduces} although $u$ and~$v$ have opposite polarizations.
\end{remark}

\begin{thm}[Solutions are polarized]\label{thm:polarization}
Let\/ $\Omega\subset\R^N$, $N\geq 2$ be a bounded open set with Lipschitz boundary and assume $\Omega$ is symmetric with respect to some hyperplane $\partial H$ of a half-space $H$. Assume further that $k_0$ satisfies \eqref{assumption-k3} for some $s\in(0,1)$, and $1 < q < \frac{N}{\, N - 2s \,}$. Then system \eqref{eq:system-gradient} has a weak solution $(u_1,u_2)$ satisfying $u_1,u_2>0$ in $\Omega$, which satisfy either $u_j = (u_j)_H$ for both $j = 1,2$, or  $u_j = (u_j)_{\sigma_H(H)}$ for both $j = 1,2$. Furthermore, if $u_j$ is symmetric with respect to~$\partial H$ for some $j \in \{\, 1,2 \,\}$, then $u_{3 - j}$ is also symmetric.
\end{thm}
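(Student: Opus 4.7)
The plan is to start from the positive ground state produced by Corollary~\ref{positive}, polarize it with respect to~$\partial H$, and exploit variational minimality together with the equality case of Lemma~\ref{reduces-functionals}. By Corollary~\ref{positive}, there exists $(u_1,u_2)$ with $u_1, u_2 > 0$ in\/~$\Omega$ which, being obtained from the mountain-pass argument, lies on $\cN$ and satisfies $J(u_1,u_2) = c = c_\cN$ by Lemma~\ref{coincide}. Set $\tilde u_j := (u_j)_H$; since\/ $\Omega$ is symmetric with respect to~$\partial H$ and $u_j > 0$ in\/~$\Omega$, also $\tilde u_j > 0$ in\/~$\Omega$, so $\Vert \tilde u_1 \tilde u_2 \Vert_{L^q(\Omega)}^q > 0$. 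The same analysis of the shape of $t \mapsto G(t\tilde u_1, t\tilde u_2)$ used in~\eqref{unbounded} produces a unique positive root $t_0$, hence $(t_0 \tilde u_1, t_0 \tilde u_2) \in \cN$.

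The decisive step is the chain of inequalities
\[
c \;\le\; J(t_0\tilde u_1, t_0\tilde u_2) \;=\; J\bigl((t_0 u_1)_H,(t_0 u_2)_H\bigr) \;\le\; J(t_0 u_1, t_0 u_2) \;\le\; J(u_1,u_2) \;=\; c,
\]
where the leftmost inequality uses $(t_0\tilde u_1, t_0\tilde u_2)\in \cN$ and $c = c_\cN$, the middle equality uses $(tu)_H = t\, u_H$ for $t \ge 0$, the second inequality is Lemma~\ref{reduces-functionals} applied to the non-negative pair $(t_0 u_1, t_0 u_2)$, and the last inequality uses that $t \mapsto J(tu_1,tu_2)$ attains its unique maximum at $t = 1$ because $(u_1,u_2)\in \cN$ (cf.~Figure~\ref{fig:1}). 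Every inequality must therefore be an equality, yielding $t_0 = 1$ and $J(\tilde u_1, \tilde u_2) = J(u_1,u_2)$. Invoking the equality clause of Lemma~\ref{reduces-functionals} I obtain $u_j \in \{(u_j)_H,\,(u_j)_{\sigma_H(H)}\}$ for $j = 1,2$ together with \eqref{product:non-negative}; the latter excludes the mixed-polarization scenario discussed in the remark following Lemma~\ref{reduces-functionals} unless one of the components is symmetric, so either $u_j = (u_j)_H$ for both $j$ or $u_j = (u_j)_{\sigma_H(H)}$ for both $j$.

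For the concluding symmetry statement, assume without loss of generality $u_1 = u_1\circ\sigma_H$. A change of variables in~\eqref{defi-op} using the radiality of $k$ (equivalently, testing the weak formulation against $\varphi - \varphi\circ\sigma_H$) gives $(Iu_1)\circ\sigma_H = Iu_1$; since $a_1$ is symmetric with respect to~$\partial H$, the equation $Iu_1 = a_1 u_1 + u_2^q\, u_1^{q-1}$ from~\eqref{eq:system-gradient} forces $u_2^q\, u_1^{q-1}$ to be symmetric. Because $u_1 > 0$ in\/~$\Omega$, the factor $u_1^{q-1}$ is symmetric and strictly positive there, so $u_2^q$ and hence $u_2$ is symmetric on\/~$\Omega$; since $u_2 \equiv 0$ on~$\R^N\setminus\Omega$ and\/ $\Omega$ is symmetric, $u_2 = u_2 \circ \sigma_H$ on all of\/~$\R^N$. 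The main obstacle is the careful use of~\eqref{product:non-negative} to rule out the opposite-polarization case: this requires $k_0$ to be strictly decreasing so that the equality clause of Lemma~\ref{reduces-functionals} can be triggered, and it needs a bookkeeping argument showing that the vanishing of the product in~\eqref{product:non-negative} forces the symmetry of at least one component, from which the symmetry of the other follows by the equation-based argument above.
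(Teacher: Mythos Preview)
Your chain of inequalities and the conclusion $t_0=1$, $J(\tilde u_1,\tilde u_2)=J(u_1,u_2)$ match the paper's Step~1 exactly, and your equation-based argument for the final symmetry statement is a legitimate (and slightly more direct) alternative to the paper's way of handling that last claim.

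The genuine gap is in ruling out the mixed-polarization case. From the equality clause of Lemma~\ref{reduces-functionals} you correctly obtain \eqref{product:non-negative}, and in the mixed case $u_1=(u_1)_H$, $u_2=(u_2)_{\sigma_H(H)}$ this forces the product $(u_1-u_1\circ\sigma_H)(u_2-u_2\circ\sigma_H)$ to vanish a.e. But that only says that at almost every point \emph{one} of the two factors vanishes; it does \emph{not} imply that one component is symmetric on all of~$\Omega$. One can easily write down positive functions with opposite polarizations whose asymmetry sets are disjoint, so no ``bookkeeping argument'' on \eqref{product:non-negative} alone can close this; the remark after Lemma~\ref{reduces-functionals} is warning you of precisely this. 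Your proposed rescue---use the equation once one component is symmetric---is circular here, because you have not yet produced a symmetric component.

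The paper closes this gap by an extra variational step you omitted: it shows that the polarized pair $(\tilde u_1,\tilde u_2)$ is itself a \emph{critical point} of~$J$ (it minimizes $J$ on~$\cN$, the tangential gradient vanishes, and by Lemma~\ref{lemma:Nehari} the direction of $(\tilde u_1,\tilde u_2)$ is non-tangential while $J'$ vanishes along it, so the full gradient is zero). Then, in the mixed case $u_1\neq\tilde u_1$, $u_2=\tilde u_2$, both $(u_1,u_2)$ and $(\tilde u_1,u_2)$ solve~\eqref{eq:system-gradient}; subtracting the second equations and testing with $\psi=(u_1-\tilde u_1)^-$ yields
\[
0=\int_\Omega (u_1^q-\tilde u_1^{\,q})\,u_2^{q-1}\,\psi\,dx<0,
\]
a contradiction, since $u_2>0$ and $u_1<\tilde u_1$ on a set of positive measure. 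To repair your argument you need this criticality of $(\tilde u_1,\tilde u_2)$ (or an equivalent device that actually uses the system), not just \eqref{product:non-negative}.
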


\begin{proof}
\textit{Step 1:} Construction of a polarized solution. Denote by $v_j = (u_j)_H$ the polarization of $u_j$ for $j = 1,2$. By Lemma~\ref{reduces-functionals}, we find $G(v_1,v_2) \le G(u_1,u_2) = 0$, hence the real-valued function $g(t) = \frac{\, t^2 \,}{\, 2 \,} \, \Vert (v_1, \allowbreak v_2) \Vert^2 \allowbreak - t^{2q} \, \Vert v_1 \, v_2 \Vert_{L^q(\Omega)}^q$, whose graph has the shape depicted in Figure~\ref{fig:1}, satisfies $g(1) = G(v_1,v_2) \le 0$. Consequently, there exists $t_0 \in (0,1]$ such that $(t_0 \, v_1, \, t_0 \, v_2) \in \cN$. We may write $J(t_0 \, v_1, \, t_0 \, v_2) \le J(t_0 \, u_1, \, t_0 \, u_2)$ by Lemma~\ref{reduces-functionals}, and $J(t_0 \, u_1, \, t_0 \, u_2) \le J(u_1,u_2)$ because the function $f(t) = J(tu_1, \allowbreak tu_2)$ attains its maximum at $t = 1$, hence
\begin{equation}\label{smaller}
J(t_0 \, v_1, \, t_0 \, v_2)
\le
J(t_0 \, u_1, \, t_0 \, u_2)
\le
J(u_1,u_2)
.
\end{equation}
Furthermore, recall that the value $c = J(u_1,u_2)$ is the minimum of~$J$ constrained to~$\cN$ by Lemma~\ref{coincide}: this and~\eqref{smaller} imply $t_0 = 1$ and $J(u_1,u_2) = J(v_1,v_2)$. Hence the pair $(v_1,v_2)$, which is made up of polarized functions, positive in~$\Omega$, is also a minimizer of the functional~$J$ constrained to~$\cN$, and therefore the intrinsic gradient, also called the tangential gradient, of the functional~$J$ on the manifold~$\cN$ vanishes there.

\bigskip

Let us prove that the normal component of the gradient vanishes as well. By Lemma~\ref{lemma:Nehari} we know that the direction of $(u,v)$ is non-tangential to~$\cN$. Furthermore, by~\eqref{Nehari}, the differential $J'$ at any $(u,v) \in \cN$ vanishes in the direction of~$(u,v)$, hence the normal component of the gradient also vanishes, as claimed. But then $J'(v_1,v_2) = 0$, and therefore the pair $(v_1,v_2)$ is a weak solution of system~\eqref{eq:system-gradient}.

\bigskip

\textit{Step 2:} Comparison between solutions. Since $J(u_1,u_2) = J(v_1,v_2)$, by Lemma~\ref{reduces-functionals} we have that either $u_1 = v_1$ or $u_1 = (u_1)_{\sigma_H(H)}$, and either $u_2 = v_2$ or $u_2 = (u_2)_{\sigma_H(H)}$. To prove the theorem we have to exclude two cases: the case when $u_1 \ne v_1$ and $u_2 = v_2$, and the case when $u_1 = v_1$ and $u_2 \ne v_2$. We examine the first case in detail, the second one being analogous. Suppose, by contradiction, that $u_1 \ne v_1$ and $u_2 = v_2$. Then $u_1 = (u_1)_{\sigma_H(H)}$ and there exists a set $X_1 \subset \Omega \cap H$ having positive measure and such that $u_1(x) < u_1(\sigma_H(x)) = v_1(x)$ for every $x \in X_1$. We may assume that $u_1(x) = u_1(\sigma_H(x)) = v_1(x)$ in $(\Omega \cap H) \setminus X_1$. Recall that the pairs $(u_1,u_2)$ and $(v_1,v_2) = (v_1,u_2)$ are both critical points of the functional~$J$. Now the condition $J'(u_1,u_2) = J'(v_1,u_2) = 0$ comes into play: we have
$$
\cE_k(u_1,\varphi)
-
\int_\Omega \big( a_1 \, u_1 \, \varphi + a_2 \, u_2 \, \psi \big) \, dx
+
\cE_k(u_2,\psi)
=
\int_\Omega
u_2^q \, u_1^{q - 1} \, \varphi \, dx
+
\int_\Omega
u_1^q \, u_2^{q - 1} \, \psi \, dx
$$
for every $(\varphi,\psi) \in \ccD_k(\Omega)$ (cf.~\eqref{differential}), and similarly
$$
\cE_k(v_1,\varphi)
-
\int_\Omega \big( a_1 \, v_1 \, \varphi + a_2 \, u_2 \, \psi \big) \, dx
+
\cE_k(u_2,\psi)
=
\int_\Omega
u_2^q \, v_1^{\, q - 1} \, \varphi \, dx
+
\int_\Omega
v_1^{\, q} \, u_2^{q - 1} \, \psi \, dx
.
$$
Letting $\varphi = 0$ and $\psi  = (u_1  - v_1 )^-$, and subtracting the second equality from the first one, we obtain
$$
0
=
\int_\Omega
(u_1^q - v_1^q) \, u_2^{q - 1} \, \psi \, dx
=
-
\int_{X_1}
(u_1^q - v_1^q) \, u_2^{q - 1} \, (u_1 - v_1) \, dx
<
0
.
$$
This contradiction shows that it is impossible to have $u_1 \ne v_1$ and $u_2 = v_2$. The case when $u_1 = v_1$ and $u_2 \ne v_2$ is excluded similarly. Hence we must have either $(u_1,u_2) = (v_1,v_2)$ or $(u_1,u_2) = ((u_1)_{\sigma_H(H)}, \, (u_2)_{\sigma_H(H)})$, as claimed. To complete the proof, suppose that $u_j$ is symmetric with respect to~$\partial H$ for some $j \in \{\, 1,2 \,\}$. For instance, suppose that $u_2$ is symmetric, the other case being analogous. Then $u_2 = v_2$, and the preceding argument shows that $u_1 = v_1$. Now we replace the half-space~$H$ with $\sigma_H(H)$, and we apply the same reasoning again, thus proving that $u_1 = (u_1)_{\sigma_H(H)}$, hence $u_1$ is symmetric. The proof is complete.
\end{proof}

\begin{remark}
We note that if $u=u_H$, then either $u$ is symmetric with respect to the reflection at $\partial H$ or there is $x\in H$ such that $u(x)>u(\sigma_H(x))$. Moreover, there exist non radial functions $u \colon \Omega \to \mathbb R$, defined in a radial set\/~$\Omega$, polarized with respect to every half-space~$H$. A two-dimensional example is given by $\Omega = B_1(0) \subset \mathbb R^2$ and $u(x_1,x_2) = x_1 \, (1 - |x|^2)$.
\end{remark}

\subsection{Proof of the existence of solutions with axial symmetry}

In the following, we finish the proof of Theorem \ref{thm:main3}. For this we assume $k_0$ satisfies \eqref{thm3:assumption1} with $c>0$ and $0<s\leq \sigma<1$. Let $\Omega$ be an open, bounded, radial domain in~$\mathbb R^N$, $N\geq 2$, and let $1<q<\frac{N}{\, N-2s \,}$. Moreover, we let $a_1,a_2\in L^{\infty}(\R)$ with $a_1\neq a_2$ and $\|a_i^+\|_{L^{\infty}(\R)}<\lambda_1(\Omega)$.

\begin{proof}[Proof of Theorem \ref{thm:main3} completed]
 Note that by Theorem \ref{thm:gradient} and Corollary \ref{positive} it follows that there are $u_1,u_2\in \cD_k(\Omega)$, $u_1,u_2>0$ satisfying \eqref{eq:system-basis2} with $u_1\neq u_2$. Moreover, by Theorem \ref{thm:polarization} and the radiality of $\Omega$ and $a_1,a_2$, it follows that for every half-space $H$ with $0\in \partial H$ we have either
\begin{itemize}
\item $u_1=(u_1)_H$ and $u_2=(u_2)_H$, or 
\item $u_1=(u_1)_{\sigma_H(H)}$ and $u_2=(u_2)_{\sigma_H(H)}$.
\end{itemize}
Hence, if either $u_1$ or $u_2$ is not radial, it follows that after a rotation ---and a renumbering if necessary--- the assumption \eqref{thm:main1-assumption2} is satisfied. Since clearly the right-hand sides of \eqref{eq:system-basis2} satisfy \eqref{thm:main1-assumption1c} the statement of Theorem \ref{thm:main3} follows from Theorem \ref{thm:main2} once we have shown that
\begin{equation}\label{claim:boundedness}
\text{$u_1$ and $u_2$ are bounded and continuous in $\Omega$.}
\end{equation}
The boundedness of the solution pair follows indeed by a standard iteration argument using the Sobolev embedding theorem. We give the details of this argument in the appendix (see Lemma \ref{boundedness0} and Corollary \ref{boundedness}). Having the boundedness of $u_1$ and $u_2$, the continuity of $u_1$ and $u_2$ in $\Omega$ follow e.g. from \cite{KM17}. Thus \eqref{claim:boundedness} holds and the statement of Theorem \ref{thm:main3} follows from Theorem~\ref{thm:main2} as mentioned before.
\end{proof}

\appendix

\section{On the polarization of a function in the nonlocal setting}

Recall the polarization of a function $u$ with respect to an open half space defined in \eqref{def:polarization}. Moreover, we use the notation of Section \ref{application}.

\medskip

In the next proposition we show that polarization reduces the energy, with special care to the equality case (see also \cite[Theorem~2]{Baernstein} and \cite[Proposition~8]{VW04}). In the proof we will need the following (somehow surprising) identity:

\begin{lemma}[Functional identity and inequality]\label{inequalities}
Let $H$ be a half-space in\/~$\mathbb R^N$, and let\/ $u \colon \allowbreak \mathbb R^N \to \mathbb R$ be any real-valued function. Define
\begin{align*}
f(x_1,x_2)
&=
u_H(x_1) \, u_H(x_2) + u_H(\sigma_H(x_1)) \, u_H(\sigma_H(x_2))
-
u(x_1) \, u(x_2) - u(\sigma_H(x_1)) \, u(\sigma_H(x_2)),
\\
\noalign{\medskip}
g(x_1,x_2)
&=
u_H(x_1) \, u_H(\sigma_H(x_2)) + u_H(\sigma_H(x_1)) \, u_H(x_2)
-
u(x_1) \, u(\sigma_H(x_2)) - u(\sigma_H(x_1)) \, u(x_2)
.
\end{align*}
For every $x_1,x_2 \in H$ we have $f(x_1,x_2) = -g(x_1,x_2) \ge 0$. Furthermore, $f(x_1,x_2) = 0$ if and only if
\begin{equation}\label{zeroth}
\big(u(x_1) - u(\sigma_H(x_1)\big)
\,
\big(u(x_2) - u(\sigma_H(x_2)\big)
\ge
0.
\end{equation}
\end{lemma}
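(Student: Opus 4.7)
\textbf{Proof plan for Lemma \ref{inequalities}.} The statement is purely algebraic: for fixed $x_1, x_2 \in H$ only the four real numbers $a := u(x_1)$, $b := u(\sigma_H(x_1))$, $c := u(x_2)$, $d := u(\sigma_H(x_2))$ enter, and in this notation
\[
f(x_1,x_2) = \max(a,b)\max(c,d) + \min(a,b)\min(c,d) - ac - bd,
\]
\[
g(x_1,x_2) = \max(a,b)\min(c,d) + \min(a,b)\max(c,d) - ad - bc,
\]
because $x_1, x_2 \in H$ makes $u_H$ evaluated at $x_1, x_2$ the max, and at the reflected points the min. My plan is to prove everything by a short computation on $(a,b,c,d)$.

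First I would observe the identity $f + g = 0$, which reduces to
\[
[\max(a,b) + \min(a,b)][\max(c,d) + \min(c,d)] = (a+b)(c+d),
\]
a tautology since $\max(a,b) + \min(a,b) = a+b$ and similarly for $c,d$. This disposes of the equality $f = -g$.

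Next I would prove the non-negativity of $f$ together with the characterization of equality in one stroke by deriving the closed form
\[
f(x_1,x_2) \;=\; \tfrac{1}{2}\bigl(\,|a-b|\,|c-d| - (a-b)(c-d)\bigr).
\]
This is obtained by substituting $\max(a,b) = \tfrac12(a+b+|a-b|)$ and $\min(a,b) = \tfrac12(a+b-|a-b|)$ (and similarly for $c,d$) into $f$, expanding, and combining the result with the elementary identity $ac+bd = \tfrac12[(a+b)(c+d) + (a-b)(c-d)]$. Once the formula is in place, the conclusion is immediate: since $|a-b|\,|c-d| = |(a-b)(c-d)| \ge (a-b)(c-d)$, we get $f \ge 0$, with equality exactly when $(a-b)(c-d) \ge 0$, which is condition \eqref{zeroth}.

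I do not expect any genuine obstacle here; the only care needed is the bookkeeping in the two substitutions and making sure the cross terms cancel correctly. A reader-friendlier alternative would be a four-case analysis according to the signs of $a-b$ and $c-d$, but the closed-form route avoids case distinctions and makes both the inequality and its equality case transparent.
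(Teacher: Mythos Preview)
Your proposal is correct and is essentially the paper's own argument in different notation: the paper sets $\xi_j=\tfrac12(u(x_j)+u(\sigma_H(x_j)))$, $\eta_j=\tfrac12(u(x_j)-u(\sigma_H(x_j)))$, so that $u_H(x_j)=\xi_j+|\eta_j|$ and $u_H(\sigma_H(x_j))=\xi_j-|\eta_j|$, and obtains $f=2|\eta_1\eta_2|-2\eta_1\eta_2=-g$, which is precisely your closed form $\tfrac12(|a-b|\,|c-d|-(a-b)(c-d))$ after identifying $\eta_1=\tfrac12(a-b)$, $\eta_2=\tfrac12(c-d)$. The only cosmetic difference is that you first isolate $f+g=0$ via $\max+\min=\text{sum}$ before computing $f$, while the paper reads off both $f$ and $g$ directly from the $(\xi,\eta)$-substitution.
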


\begin{proof}
Define $\xi_j = \frac1{\, 2 \,} \, \big(u(x_j) + u(\sigma_H(x_j))\big)$ and $\eta_j = \frac1{\, 2 \,} \, \big(u(x_j) - u(\sigma_H(x_j))\big)$, $j = 1,2$, so that
$$
\left\{
\begin{aligned}
u(x_j) &= \xi_j + \eta_j
\\
u(\sigma_H(x_j)) &= \xi_j - \eta_j
\end{aligned}
\right.
\qquad
\left\{
\begin{aligned}
u_H(x_j) &= \xi_j + |\eta_j|
\\
u_H(\sigma_H(x_j)) &= \xi_j - |\eta_j|
\end{aligned}
\right.
$$
where we have used the assumption that $x_1,x_2 \in H$. With this notation, we may write $f(x_1,x_2) = 2 \, |\eta_1 \, \eta_2| - 2 \, \eta_1 \, \eta_2$ and $g(x_1,x_2) = 2 \, \eta_1 \, \eta_2 - 2 \, |\eta_1 \, \eta_2|$, while inequality~\eqref{zeroth} reduces to $\eta_1 \, \eta_2 \ge 0$. The lemma follows.
\end{proof}

\begin{prop}[Polarization reduces the energy]\label{reduces-energy}
Let\/~$\Omega$ be an open set in\/~$\mathbb R^N$, and let $u \in \cD_k(\Omega)$. For every half-space $H \subset \mathbb R^N$ we have $\cE_k(u_H,u_H) \le \cE_k(u,u)$. Furthermore, if the kernel $k(z) = k_0(|z|)$ is given by a strictly decreasing function~$k_0$, then the equality $\cE_k(u_H,u_H) = \cE_k(u,u)$ holds if and only if either $u = u_H$, or $u = u_{\sigma_H(H)}$.
\end{prop}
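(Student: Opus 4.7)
The plan is to reduce the double integral defining $\cE_k$ over $\R^N\times \R^N$ to an integral over $H\times H$ by exploiting the radiality of $k$ together with the reflection $\sigma_H$. Splitting $\R^N$ into $H$ and $\sigma_H(H)$ (up to the null set $\partial H$) decomposes $\R^N\times\R^N$ into the four blocks $H\times H$, $H\times\sigma_H(H)$, $\sigma_H(H)\times H$, $\sigma_H(H)\times \sigma_H(H)$. The substitutions $x\mapsto \sigma_H(x)$ and $y\mapsto \sigma_H(y)$ preserve $k(x-y)$ (because $k$ is radial) and bring each block back onto $H\times H$. Writing $k_1:=k(x-y)$ and $k_2:=k(x-\sigma_H(y))$ for $x,y\in H$, and noting that $|x-y|\le|x-\sigma_H(y)|$ whenever $x,y\in H$, monotonicity of $k_0$ yields $k_1\ge k_2$.

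After this reduction, $2\cE_k(u,u)$ becomes the integral over $H\times H$ of
\[
\bigl[(u(x)-u(y))^2+(u(\sigma_H(x))-u(\sigma_H(y)))^2\bigr]\,k_1+\bigl[(u(x)-u(\sigma_H(y)))^2+(u(\sigma_H(x))-u(y))^2\bigr]\,k_2,
\]
and the analogous formula holds for $u_H$. Expanding the squares, one checks that every pure square term (such as $u(x)^2+u(\sigma_H(x))^2$) is unchanged by polarization, since $u_H(x)^2+u_H(\sigma_H(x))^2=u(x)^2+u(\sigma_H(x))^2$ on $H$. Hence all that remains in the difference $2(\cE_k(u,u)-\cE_k(u_H,u_H))$ are the cross terms, which are precisely $2f(x,y)\,k_1+2g(x,y)\,k_2$ with $f,g$ the functions of Lemma \ref{inequalities}. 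Using the identity $g=-f$, this simplifies to $2f(x,y)\,(k_1-k_2)$. Because $f\ge 0$ (Lemma \ref{inequalities}) and $k_1-k_2\ge 0$, we conclude $\cE_k(u_H,u_H)\le \cE_k(u,u)$.

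For the equality case, assume $k_0$ is strictly decreasing. Then $k_1>k_2$ strictly off the null set $\{x=\sigma_H(y)\}\subset H\times H$, so equality in the energy inequality forces $f(x,y)=0$ for a.e.\ $(x,y)\in H\times H$. By Lemma \ref{inequalities}, this is equivalent to the pointwise condition $\eta(x)\,\eta(y)\ge 0$ almost everywhere on $H\times H$, where $\eta(x):=u(x)-u(\sigma_H(x))$. A simple measure-theoretic argument shows $\eta$ has one sign a.e.: if both $\{\eta>0\}$ and $\{\eta<0\}$ had positive measure in $H$, Fubini would produce a positive-measure set of pairs $(x,y)$ with $\eta(x)\eta(y)<0$. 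Therefore either $\eta\ge 0$ a.e.\ on $H$, giving $u=u_H$, or $\eta\le 0$ a.e.\ on $H$, giving $u=u_{\sigma_H(H)}$.

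The main technical nuisance is organizing the four-piece decomposition so that the pointwise algebraic identity of Lemma \ref{inequalities} can be invoked cleanly on $H\times H$; once this bookkeeping is in place the inequality is immediate and the equality discussion reduces to a one-variable sign argument.
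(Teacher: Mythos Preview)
Your proof is correct and follows essentially the same route as the paper: reduce the double integral to $H\times H$ by reflection, observe that the pure squares are polarization-invariant so only the cross terms survive, invoke Lemma~\ref{inequalities} to write the difference as $\int_{H\times H} f\,(k_1-k_2)$, and settle the equality case by the sign argument on $\eta$. One small slip: the set where $k_1=k_2$ is not $\{x=\sigma_H(y)\}$ but rather $\{d(x,\partial H)\,d(y,\partial H)=0\}$ (via $|x-\sigma_H(y)|^2-|x-y|^2=2\,d(x,\partial H)\,d(y,\partial H)$), which is empty in the open set $H\times H$; this only strengthens your conclusion.
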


\begin{proof}
We start by giving a convenient expression of $\cE_k(u,u)$. Since the integral is additive with respect to the domain of integration, we can split
$$
\cE_k(u,u)
=
\int_{H \times \mathbb R^N}
(u(x_1) - u(z))^2 \, k(x_1 - z) \, dx_1 \, dz\
+
\int_{\sigma_H(H) \times \mathbb R^N}
(u(x) - u(y))^2 \, k( x- y) \, dx \, dy
.
$$
The last integral, by the change of variables $x = \sigma_H(x_1)$ and $y = \sigma_H(z)$, satisfies
$$
\int_{\sigma_H(H) \times \mathbb R^N}
(u(x) - u(y))^2 \, k( x- y) \, dx \, dy\
=
\int_{H \times \mathbb R^N}
(u(\sigma_H(x_1)) - u(\sigma_H(z)))^2 \, k(x_1 - z) \, dx_1 \, dz
$$
and therefore we may write
$$
\cE_k(u,u)
=
\int_{H \times \mathbb R^N}
\Big(
(u(x_1) - u(z))^2
+
(u(\sigma_H(x_1)) - u(\sigma_H(z))^2
\Big)
\,
k(x_1 - z) \, dx_1 \, dz
.
$$
Let us repeat the argument once more: we split
\begin{align*}
\cE_k(u,u)
&=
\int_{H \times H}
\Big(
(u(x_1) - u(z))^2
+
(u(\sigma_H(x_1)) - u(\sigma_H(z))^2
\Big)
\,
k(x_1 - z) \, dx_1 \, dz
\\
\noalign{\medskip}
&+
\int_{H \times \sigma_H(H)}
\Big(
(u(x_1) - u(z))^2
+
(u(\sigma_H(x_1)) - u(\sigma_H(z))^2
\Big)
\,
k(x_1 - z) \, dx_1 \, dz
.
\end{align*}
Now in the first integral we write $x_2$ in place of~$z$, and in the last integral we let $x_2 = \sigma_H(z)$, thus obtaining
\begin{align*}
\cE_k(u,u)
&=
\int_{H \times H}
\Big(
(u(x_1) - u(x_2))^2
+
(u(\sigma_H(x_1)) - u(\sigma_H(x_2))^2
\Big)
\,
k(x_1 - x_2) \, dx_1 \, dx_2
\\
\noalign{\medskip}
&+
\int_{H \times H}
\Big(
(u(x_1) - u(\sigma_H(x_2)))^2
+
(u(\sigma_H(x_1)) - u(x_2)^2
\Big)
\,
k(x_1 - \sigma_H(x_2)) \, dx_1 \, dx_2
.
\end{align*}
By a similar procedure we also obtain
\begin{align*}
\cE_k(u_H,u_H)
&=
\int_{H \times H}\!
\Big(
(u_H(x_1) - u_H(x_2))^2
+
(u_H(\sigma_H(x_1)) - u_H(\sigma_H(x_2))^2
\Big)
\,
k(x_1 - x_2) \, dx_1 \, dx_2
\\
\noalign{\medskip}
&+
\int_{H \times H}\!
\Big(
(u_H(x_1) - u_H(\sigma_H(x_2)))^2
+
(u_H(\sigma_H(x_1)) - u_H(x_2)^2
\Big)
\,
k(x_1 - \sigma_H(x_2)) \, dx_1 \, dx_2
.
\end{align*}
To go further, observe that $u^2(x_j) + u^2(\sigma_H(x_j)) = u_H^2(x_j) + u_H^2(\sigma_H(x_j))$ for $j = 1,2$. Hence
\begin{align*}
{\textstyle\frac1{\, 2 \,}}
\,
\big(\cE_k(u,u) - \cE_k(u_H,u_H)\big)
&=
\int_{H \times H}
f(x_1,x_2)
\,
k(x_1 - x_2) \, dx_1 \, dx_2
\\
\noalign{\medskip}
&+
\int_{H \times H}
g(x_1,x_2)
\,
k(x_1 - \sigma_H(x_2)) \, dx_1 \, dx_2
\end{align*}
where $f$ and~$g$ are as in Lemma~\ref{inequalities}. Since $f = -g$, we may write
$$
{\textstyle\frac1{\, 2 \,}}
\,
\big(\cE_k(u,u) - \cE_k(u_H,u_H)\big)
=
\int_{H \times H}
f(x_1,x_2)
\,
\Big(
k(x_1 - x_2)
-
k(x_1 - \sigma_H(x_2))
\Big)
\,
dx_1 \, dx_2
.
$$
When the pair $(x_1,x_2)$ ranges in the domain of integration $H \times H$, the distance from~$x_1$ to~$x_2$ cannot be larger than the distance from~$x_1$ to~$\sigma_H(x_2)$ (see~\eqref{equality}). Since $k_0$ is monotone decreasing by assumption, it follows that $k(x_1 - x_2) - k(x_1 - \sigma_H(x_2)) \ge 0$, which implies $\cE_k(u_H,u_H) \le \cE_k(u,u)$ because $f$ is non-negative. To manage the special case when $k_0$ is strictly decreasing, we need the equality
\begin{equation}\label{equality}
|x_1 - \sigma_H(x_2)|^2 - |x_1 - x_2|^2 = 2 \, d_1 \, d_2
,
\end{equation}
where $d_j \ge 0$ denotes the distance from $x_j$ to~$\partial H$, $j = 1,2$. Equality~\eqref{equality} is established as follows. Let $\pi_j \in \partial H$ be the projection of~$x_j$ onto~$\partial H$, $j = 1,2$. Then by the Pythagorean theorem (see Figure~\ref{fig:2}) we have $|x_1 - \sigma_H(x_2)|^2 = (d_1 + d_2)^2 + |\pi_1 - \pi_2|^2$ as well as $|x_1 - x_2|^2 = (d_1 - d_2)^2 + |\pi_1 - \pi_2|^2$, and \eqref{equality} follows. Such an equality shows that $k(x_1 - x_2) - k(x_1 - \sigma_H(x_2)) > 0$ for all $x_1,x_2$ in the (open) half-space~$H$. But then the equality $\cE_k(u_H,u_H) = \cE_k(u,u)$ holds if and only if $f(x_1,x_2) = 0$ a.e.\ in~$H \times H$. By Lemma~\ref{inequalities}, this occurs if and only if \eqref{zeroth} holds a.e.\ in~$H \times H$. Clearly, if $u = u_H$ or $u(x) = u_H(\sigma_H(x))$ almost everywhere in\/~$\mathbb R^N$, then both factors in~\eqref{zeroth} have the same sign and therefore the inequality holds. Conversely, assume that \eqref{zeroth} holds true. Then $u$ may be symmetric with respect to~$\partial H$. Otherwise there exists a non-negligible set $X \subset H$ such that either $u(x_1) - u(\sigma_H(x_1)) > 0$ in~$X$, or $u(x_1) - u(\sigma_H(x_1)) < 0$ in~$X$. In the first case, \eqref{zeroth} implies that $u(x_2) - u(\sigma_H(x_2)) \ge 0$ a.e.\ in~$H$, hence $u = u_H$. In the second case, \eqref{zeroth} implies that $u = u_{\sigma_H(H)}$. The proof is complete.
\end{proof}

\begin{figure}[h]
\centering
\begin{picture}(200,120)(0,-5)
\put(0,0){$\bullet$} % The point x_1
\put(-7,-6){$x_1$}
\put(180,25){$\bullet$} % The point x_2
\put(185,20){$x_2$}
\put(180,100){$\bullet$} % The point \sigma_H(x_2)
\put(185,95){$\sigma_H(x_2)$}
\put(-20,65){\line(1,0){225}} % The hyperplane \partial H
\put(210,63){$\partial H$}
\put(2.5,2){\line(0,1){63}} % First projection line
\put(0,68){$\pi_1$}
\put(182.5,27){\line(0,1){38}} % Second projection line
\put(180,68){$\pi_2$}
\put(6,33){$d_1$}
\put(170,44){$d_2$}
\end{picture}
\caption{Finding $|x_1 - x_2|$ and $|x_1 - \sigma_H(x_2)|$}
\label{fig:2}
\end{figure}
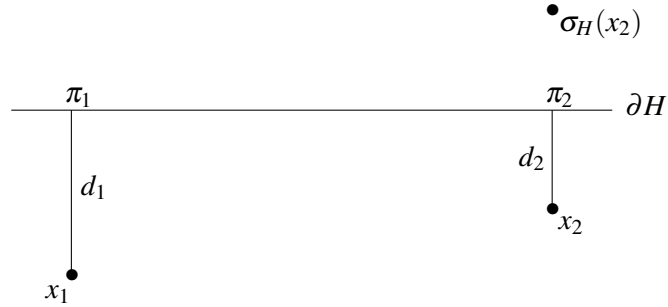

The proposition above, which deals with the energy functional, is used in combination with the following, which deals with the $L^q$-norm of the product of two given functions. Contrary to what one may expect, it turns out that polarization increases the norm:

\begin{prop}[On the $L^q$-norm of a product]\label{prop:increases}
Let\/ $\Omega\subset \R^N$ be an open, nonempty set, symmetric with respect to the boundary~$\partial H$ of some half-space~$H$. Take two non-negative functions $u,v \in L^q(\Omega)$ for some $q \in [1,\infty)$. Then
\begin{equation}\label{eq:increases}
\Vert u_H \, v_H \Vert_{L^q(\Omega)}
\ge
\Vert u \, v \Vert_{L^q(\Omega)}
.
\end{equation}
Furthermore, equality holds in~\eqref{eq:increases} if and only if
\begin{equation}\label{product:non-negative}
\big( u(x) - u(\sigma_H(x)) \big)
\,
\big( v(x) - v(\sigma_H(x)) \big)
\ge
0
\quad
\mbox{a.e.\ in\/ $\Omega$}
.
\end{equation}
\end{prop}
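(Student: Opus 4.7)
The plan I would follow is this. First, I would use the symmetry of $\Omega$ with respect to $\partial H$ to split both $L^q$-norms into integrals over $\Omega \cap H$. Concretely, for any non-negative $w \in L^q(\Omega)$, the change of variable $x \mapsto \sigma_H(x)$ together with $\sigma_H(\Omega) = \Omega$ gives
\[
\|w\|_{L^q(\Omega)}^q \;=\; \int_{\Omega \cap H}\bigl[w(x)^q + w(\sigma_H(x))^q\bigr]\,dx.
\]
Applying this to $w = uv$ and to $w = u_H v_H$, and using that for $x \in H$ one has $u_H(x) = \max\{u(x),u(\sigma_H(x))\}$ and $u_H(\sigma_H(x)) = \min\{u(x),u(\sigma_H(x))\}$, and likewise for $v$, the inequality \eqref{eq:increases} becomes equivalent to the pointwise bound
\[
\bigl(\max\{a,b\}\max\{c,d\}\bigr)^q + \bigl(\min\{a,b\}\min\{c,d\}\bigr)^q \;\geq\; (ac)^q + (bd)^q
\]
for all non-negative reals $a,b,c,d$, where $a=u(x)$, $b=u(\sigma_H(x))$, $c=v(x)$, $d=v(\sigma_H(x))$.

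Next, I would verify this pointwise inequality by a short case distinction on the signs of $a-b$ and $c-d$. When $(a-b)(c-d)\geq 0$, the maxima and minima align so that both sides are identical. When $(a-b)(c-d)<0$, say $a\geq b$ and $c\leq d$, the left-hand side equals $(ad)^q+(bc)^q$ and
\[
(ad)^q+(bc)^q-(ac)^q-(bd)^q \;=\; (a^q-b^q)(d^q-c^q)\;\geq\; 0,
\]
with equality iff $a=b$ or $c=d$; the case $a\leq b$, $c\geq d$ is symmetric. Hence the pointwise inequality holds, and pointwise equality is characterized exactly by $(a-b)(c-d)\geq 0$.

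Finally, integrating the pointwise inequality over $\Omega\cap H$ yields \eqref{eq:increases}. For the equality statement, integral equality forces pointwise equality a.e.\ on $\Omega\cap H$, that is, \eqref{product:non-negative} a.e.\ on $\Omega \cap H$; conversely this condition is plainly sufficient. Since the function $x \mapsto \bigl(u(x)-u(\sigma_H(x))\bigr)\bigl(v(x)-v(\sigma_H(x))\bigr)$ is invariant under $x\mapsto\sigma_H(x)$, holding a.e.\ on $\Omega \cap H$ is the same as holding a.e.\ on all of $\Omega$. The one point requiring a little care, rather than serving as a real obstacle, is identifying the correct equality set: pointwise equality occurs on the whole set $\{(a-b)(c-d)\geq 0\}$, not merely where some factor vanishes, which is precisely why the equality condition \eqref{product:non-negative} is stated with $\geq 0$ rather than $=0$.
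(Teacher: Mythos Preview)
Your proposal is correct and follows essentially the same approach as the paper: both reduce the inequality to a pointwise bound on $\Omega\cap H$ via the symmetry of~$\Omega$, then establish that bound together with its equality case. The only cosmetic difference is in the verification of the pointwise inequality: the paper invokes the rearrangement inequality and uses the substitution $u^q(x)=\xi_u+\eta_u$, $u^q(\sigma_H(x))=\xi_u-\eta_u$ (and similarly for~$v$) to reduce it to $|\eta_u\eta_v|\ge\eta_u\eta_v$, whereas you do a direct case split and factor $(a^q-b^q)(d^q-c^q)$; these are equivalent computations.
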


\begin{proof}
In order to prove~\eqref{eq:increases}, we split
$$
\int_\Omega u^q(x) \, v^{\, q}(x) \, dx\
=
\int_{\Omega \cap H} u^q(x) \, v^{\, q}(x) \, dx\
+
\int_{\Omega \cap \sigma_H(H)} u^q(y) \, v^{\, q}(y) \, dy
$$
and perform the change of variable $y = \sigma_H(x)$ in the last integral. Since $\Omega = \sigma_H(\Omega)$, we obtain
$$
\int_\Omega u^q(x) \, v^{\, q}(x) \, dx\
=
\int_{\Omega \cap H}
\Big(
u^q(x) \, v^{\, q}(x)
+
u^q(\sigma_H(x)) \, v^{\, q}(\sigma_H(x))
\Big)
\, dx
.
$$
By a similar procedure we also obtain
$$
\int_\Omega u^q_H(x) \, v^{\, q}_H(x) \, dx\
=
\int_{\Omega \cap H}
\Big(
u^q_H(x) \, v^{\, q}_H(x)
+
u^q_H(\sigma_H(x)) \, v^{\, q}_H(\sigma_H(x))
\Big)
\, dx
.
$$
Thus, it is enough to prove that for all $y \in \Omega \cap H$ we have
\begin{equation}\label{rearrangement}
u^q_H(x) \, v^{\, q}_H(x)
+
u^q_H(\sigma_H(x)) \, v^{\, q}_H(\sigma_H(x))
\ge
u^q(x) \, v^{\, q}(x)
+
u^q(\sigma_H(x)) \, v^{\, q}(\sigma_H(x))
,
\end{equation}
which is readily obtained from the rearrangement inequality \cite[(10.2.1)]{HLP}. To manage with the equality case, we prefer to let
$$
\left\{
\begin{aligned}
u^q(x) &= \xi_u + \eta_u
\\
u^q(\sigma_H(x)) &= \xi_u - \eta_u
\end{aligned}
\right.
\qquad
\left\{
\begin{aligned}
v^{\, q}(x) &= \xi_v + \eta_v
\\
v^{\, q}(\sigma_H(x)) &= \xi_v - \eta_v
\end{aligned}
\right.
$$
Thus, for $y \in \Omega \cap H$ we have
$$
\left\{
\begin{aligned}
u_H^q(x) &= \xi_u + |\eta_u|
\\
u_H^q(\sigma_H(x)) &= \xi_u - |\eta_u|
\end{aligned}
\right.
\qquad
\left\{
\begin{aligned}
v_H^{\, q}(x) &= \xi_v + |\eta_v|
\\
v_H^{\, q}(\sigma_H(x)) &= \xi_v - |\eta_v|
\end{aligned}
\right.
$$
and~\eqref{rearrangement} reduces to $|\eta_u \, \eta_v| \ge \eta_u \, \eta_v$, which obviously holds. Equality is achieved in~\eqref{eq:increases} if and only if $\eta_u \, \eta_v \ge 0$ a.e.\ in~$\Omega \cap H$, which is equivalent to~\eqref{product:non-negative}.
\end{proof}

\begin{remark}
If $u$ is symmetric with respect to~$\partial H$, for instance if $u$ is constant, then\/ \eqref{product:non-negative} holds for every~$v$.
\end{remark}

\section{On the boundedness of solutions}

In the following, $N\geq 2$, and we assume that $k_0$ satisfies \eqref{thm3:assumption1} for some $s,\gamma,\sigma\in(0,1)$ and $c>0$. Moreover, $\Omega$ is an open bounded set in $\R^N$ with Lipschitz boundary,  $\cE_k(u,v)$ and $\cD_k(\Omega)$ are defined as in Subsection~\ref{bilinearform}. For a related result with the fractional Laplacian, see also \cite[Lemma 2.3]{FMPSZ16}.
\begin{lemma}\label{boundedness0}
Let $A$ be a non-negative constant, and $1\leq q <\frac{\, 2_s \,}2$ with $2_s := \frac{2N}{\, N-2s \,}$.
If $u_1,u_2$ are two functions  in $\cD_k(\Omega)$ satisfying
\begin{equation}\label{eq:moser-iteration0}
	\left|\cE_k(u_i,\phi)\right|\leq \int_\Omega \Big( A \, |u_i| + |u_{3-i}|^q \, |u_i|^{q-1} \Big) \, \phi \, dx\quad\text{for all $\phi\in \cD_k(\Omega)$, $\phi\geq 0$, $i = 1,2$,}
\end{equation}
then $u_1,u_2\in L^\infty(\Omega)$.
\end{lemma}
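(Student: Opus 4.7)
The lemma is a standard $L^\infty$ estimate for a subcritical coupled nonlocal system, which I would obtain by Moser iteration. My plan has three stages.

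\medskip

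\emph{Stage 1 --- setup and reduction.} By the lower bound in \eqref{thm3:assumption1} and standard nonlocal Sobolev theory (cf.\ \cite{JW19}), $\cD_k(\Omega)$ embeds continuously into $L^{2_s}(\Omega)$, so $u_1, u_2 \in L^{2_s}(\Omega)$ to begin with. Since \eqref{eq:moser-iteration0} is invariant under $u_i \mapsto -u_i$ (with $u_{3-i}$ unchanged), it suffices to bound $w_i := u_i^+$ in $L^\infty(\Omega)$ for both $i=1,2$. Testing \eqref{eq:moser-iteration0} with any $\phi \ge 0$ supported in $\{u_i > 0\}$, a short computation in the spirit of Lemma~\ref{vk-properties}(3) gives $\cE_k(u_i^-, \phi) \le 0$; hence $\cE_k(w_i,\phi) \le \cE_k(u_i,\phi)$ and
\[
\cE_k(w_i, \phi) \;\le\; \int_\Omega \Big( A\, w_i + |u_{3-i}|^q\, w_i^{\,q-1} \Big)\, \phi\, dx
\]
for every $\phi \in \cD_k(\Omega)$ with $\phi \ge 0$ and $\supp \phi \subset \{w_i > 0\}$.

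\medskip

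\emph{Stage 2 --- Moser $L^p$-bootstrap.} Fix $\beta \ge 1$ and $M > 0$, and take the admissible test function $\phi = T_M(w_i)^{2\beta-2}\, w_i$, where $T_M(t) := \min(t, M)$; this is a bounded Lipschitz function of $w_i$, hence in $\cD_k(\Omega) \cap L^\infty(\Omega)$. A standard Stroock--Varopoulos-type algebraic inequality yields
\[
\cE_k(w_i, \phi) \;\ge\; c\,\beta^{-2}\; \cE_k\bigl(T_M(w_i)^{\beta-1}\, w_i,\; T_M(w_i)^{\beta-1}\, w_i\bigr),
\]
which, combined with the embedding of Stage 1 and monotone convergence $M \to \infty$, produces
\[
\|w_i\|_{L^{2_s\beta}(\Omega)}^{2\beta} \;\le\; C\,\beta^2 \Big( A\,\|w_i\|_{L^{2\beta}(\Omega)}^{2\beta} + \int_\Omega |u_{3-i}|^q\, w_i^{\,2\beta + q - 2}\, dx \Big).
\]
Inductively assume $u_1, u_2 \in L^{r_k}(\Omega)$, starting at $r_0 = 2_s$. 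Applying H\"older to the coupling integral with conjugate exponents $(r_k/q,\; r_k/(r_k - q))$, and choosing $\beta = \beta_k := (r_k + 2)/2 - q$ --- which makes the resulting exponent of $w_i$ on the right equal to $r_k$ --- one gains $u_1, u_2 \in L^{r_{k+1}}(\Omega)$ with the affine recursion
\[
r_{k+1} \;=\; 2_s\,\beta_k \;=\; \tfrac{2_s}{2}\,(r_k + 2 - 2q).
\]
The subcritical hypothesis $q < 2_s/2$ gives slope $2_s/2 > 1$ and places the unique fixed point $r_\infty = 2\cdot 2_s\,(q - 1)/(2_s - 2)$ strictly below $r_0 = 2_s$, so $r_k \nearrow \infty$ geometrically. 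Therefore $u_1, u_2 \in L^r(\Omega)$ for every $r < \infty$.

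\medskip

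\emph{Stage 3 --- from $L^r$ for every $r$ to $L^\infty$.} Once $u_1, u_2 \in \bigcap_{r<\infty} L^r(\Omega)$, I would conclude by a De Giorgi iteration on the super-level sets $\{w_i > L\}$: test \eqref{eq:moser-iteration0} with $\phi = (w_i - L)^+$, use the cross-term inequality $\cE_k(w_i, (w_i - L)^+) \ge \cE_k((w_i - L)^+, (w_i - L)^+)$ together with Sobolev to estimate $\|(w_i - L)^+\|_{L^{2_s}(\Omega)}$, and exploit the fact that the right-hand side of \eqref{eq:moser-iteration0} now lies in $L^r$ for every $r$. Along the sequence $L_k = L_0\,(2 - 2^{-k})$ this produces a super-linear recursion $a_{k+1} \le C\,2^{k\,2_s}\,a_k^{1+\eta}$ for $a_k := |\{w_i > L_k\}|$, which collapses to zero for $L_0$ large enough and hence yields $\|w_i\|_{L^\infty(\Omega)} \le 2L_0 < \infty$. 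Alternatively, the same conclusion follows by tracking the $\beta$-dependent constants in Stage 2 as $\beta \to \infty$.

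\medskip

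The main obstacle is Stage 2: arranging the H\"older exponents so that the recursion $r_{k+1} = \tfrac{2_s}{2}(r_k + 2 - 2q)$ is genuinely expanding starting from $r_0 = 2_s$, which is precisely where the subcritical hypothesis $q < 2_s/2$ is used. A secondary technical point is that the coupling forces one to propagate $L^{r_k}$-bounds for $u_1$ and $u_2$ in tandem at every step, since each equation uses the current $L^{r_k}$-norm of the other component.
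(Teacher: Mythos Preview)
Your approach is correct and follows the same overall Moser-iteration strategy as the paper, with the same recursion on integrability exponents (your $r_{k+1} = \tfrac{2_s}{2}(r_k + 2 - 2q)$ is exactly the paper's $p_{m+1} = \nu\, p_m - 2\nu(q-1)$ with $\nu = 2_s/2$). There are, however, a few technical differences worth noting.

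First, to eliminate the linear term $A|u_i|$, the paper performs a kernel truncation $k = k_\delta + j_\delta$ (with $j_\delta \in L^1 \cap L^2$) combined with a shift $v_{in} = (u_i - n)^+$ for large~$n$; the bounded-kernel contribution and the linear term then combine to something nonpositive on the support of the test function, yielding the clean inequality $\cE_{k_\delta}(u_i, g(v_{in})) \le \int |u_{3-i}|^q\,|u_i|^{q-1}\,g(v_{in})\,dx$. Your treatment is more direct: you keep $A\,w_i$ on the right and observe that, with your choice of~$\beta$, the resulting term $\|w_i\|_{L^{2\beta}}^{2\beta}$ has $2\beta = r_k + 2 - 2q \le r_k$ (since $q \ge 1$) and is therefore controlled by the induction hypothesis. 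Both routes work; yours avoids the truncation machinery, while the paper's yields a slightly cleaner iteration formula.

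Second, you reduce to $w_i = u_i^+$ via the observation $\cE_k(u_i^-, \phi) \le 0$ for nonnegative $\phi$ supported in $\{u_i > 0\}$, whereas the paper treats $(u_i - n)^+$ and $(-u_i - n)^+$ in parallel. These are equivalent reductions.

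Finally, for the passage from $\bigcap_{r<\infty} L^r$ to $L^\infty$, the paper runs a second Moser iteration, now with the coupling term frozen as a fixed function in $L^{p_0}$ for some $p_0 > N/(2s)$, and checks that the infinite product of iteration constants converges. You propose a De~Giorgi level-set iteration instead. Either argument closes the proof.
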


\begin{remark}
Since $u_1,u_2,\varphi\in \cD_k(\Omega)$, it follows immediately by the Sobolev embedding theorem that $u_1,u_2,\varphi\in L^m(\Omega)$ for every $m \in [1,2_s]$. Since $q \le \frac{\, 2_s \,}2$ the integral in\/~\eqref{eq:moser-iteration0} converges. The strict inequality $q < \frac{\, 2_s \,}2$ is needed in the Moser iteration (see below).
\end{remark}

\begin{proof}[Proof of Lemma~\ref{boundedness0}]
We follow the idea of Moser's iteration presented in \cite{BP16} to show the claim.

\newcounter{n}\setcounter{n}{0}% Step number
\refstepcounter{n}\textit{Step \arabic{n}: Preliminaries.} Let $g \in W^{1,1}_{\rm loc}(\R)$ be nondecreasing and define
\begin{equation}
\label{defi:G}
G \colon \R\to \R,\quad G(t)=\int_0^t \sqrt{g'(\tau)}\ d\tau.
\end{equation}
Then we have (see also \cite[Lemma A.2]{BP16}) for $a,b\in \R$ using H\"older's inequality
\begin{align*}
(G(a)-G(b))^2=\Bigg(\int_{b}^aG'(t)\ dt\Bigg)^{\!\! 2}&\leq |b-a| \int_{\min\{a,b\}}^{\max\{a,b\}}G'(t)^2\ dt= |b-a| \int_{\min\{a,b\}}^{\max\{a,b\}}g'(t)\ dt\\
\noalign{\smallskip}
&=(a-b)(g(a)-g(b)).
\end{align*}
Hence, if $g\colon\R\to\R$ is a nondecreasing Lipschitz function, that is, we have for some $L_g>0$, $|g(a)-g(b)|\leq L_g \, |a-b|$ for all $a,b\in \R$, it follows that $\cE_k(g(v),g(v))\leq L_g^2 \, \cE_k(v,v)$ and
\begin{equation}
\label{estimate-below-start}
\cE_k(G(v),G(v))\leq \cE_k(v,g(v))\leq L_g \, \cE_k(v,v)\quad\text{ for all $v\in \cD_k(\Omega)$.}
\end{equation}
In particular, we see that both $g(v)$ and $G(v)$ belong to $\cD_k(\Omega)$.

\bigskip

\refstepcounter{n}\textit{Step \arabic{n}: A convenient Lipschitz function.} To apply Moser's iteration, define for $L > 0$, $r\geq 2$ the Lipschitz function 
\[
g\colon\R\to\R, \quad 
g(t)
=
\left\{\begin{aligned} &\ 0 && t\leq 0,\\
\noalign{\smallskip}
&\frac{t^{r-1}}{\, r-1 \,} && 0<t<L, \\
\noalign{\smallskip}
&\frac{L^{r-1}}{\, r-1 \,} && t\geq L.\end{aligned}
\right.
\]
Then $g'(t)=0=G(t)$ for $t< 0$, where $G$ is defined as in \eqref{defi:G}, and for $t>0$ we have
\[
g'(t)=\left\{\begin{aligned} &t^{r-2} && 0<t<L, \\
& 0 && t> L,\end{aligned}\right.
\quad\text{and hence}\quad 
G(t)
=
\left\{\begin{aligned} & \frac{\, 2 \, t^\frac{\, r \,}2 \,}r && 0<t<L, \\
\noalign{\smallskip}
& \frac{\, 2 \, L^\frac{\, r \,}2 \,}r  && t\geq L.\end{aligned}
\right.
\]
By the definition of $g$, it follows that if $v\geq0$ then $g(v)\geq 0$.

\bigskip

\refstepcounter{n}\label{above}\textit{Step \arabic{n}: Energy estimate from above.} We perform a suitable truncation of the kernel~$k$ and the solutions~$u_i$: our purpose is to get rid of the linear term $A \, |u_i|$ in~\eqref{eq:moser-iteration0}, thus proving~\eqref{estimate-above-1}. Let 
$$
k_{\delta}:=\chi_{B_{\delta}(0)} \, k\quad\text{and}\quad j_{\delta}:=k-k_{\delta}.
$$
 Note that $k_{\delta}$ satisfies the same assumptions as $k$ and, in particular, we have $\cD_{k_{\delta}}(\Omega)=\cD_k(\Omega)\subset L^{2_s}(\Omega)$ for all $\delta>0$. Moreover, by our assumptions on $k_0$, we have $j_{\delta}\in L^1(\R^N)\cap L^2(\R^N)$ for all $\delta>0$ and $J_{\delta}:=\|j_{\delta}\|_{L^1(\R^N)}\to \infty$ for $\delta \to 0$, hence we can fix some $\delta>0$ such that
$$
J_{\delta}>A.
$$
With the Cauchy-Schwarz inequality and $v_{in}=(u_i-n)^+$ for $n\in \N_0$ we have, since $k_\delta = k - j_\delta$ and taking \eqref{eq:moser-iteration0} into account,
 \begin{align*}
 \cE_{k_\delta}&(u_i,g(v_{in}))=\cE_{k}(u_i,g(v_{in}))-J_\delta\int_{\Omega} u_i(x) \, g(v_{in}(x))\ dx +\int_\Omega g(v_{in}(x)) \int_{\R^N} u_i(y) \, j_\delta(x-y)\ dy\ dx\\
&\leq  \int_\Omega \Big\{ (A\,|u_i(x)|-J_\delta\, u_i(x)) +|u_{3-i}(x)|^q \, |u_i(x)|^{q-1}+\int_{\R^N} u_i(y) \, j_\delta(x-y)\ dy \Big\} \, g(v_{in}(x)) \, dx\\
&= \int_\Omega \Big\{ (A-J_\delta) \, u_i(x)+|u_{3-i}(x)|^q \, |u_i(x)|^{q-1}+\int_{\R^N} u_i(y) \, j_\delta(x-y)\ dy \Big\} \, g(v_{in}(x)) \, dx\\
 &\leq \int_\Omega \Big\{ (A-J_\delta) \, n+|u_{3-i}(x)|^q \, |u_i(x)|^{q-1}+\|u_i\|_{L^2(\R^N)} \, \|j_\delta\|_{L^2(\R^N)} \Big\} \, g(v_{in}(x)) \, dx,
 \end{align*}
where we have used that $u_i\geq n$ in the set $\{\, g(v_{in})>0 \,\}$. Since $J_\delta>A$ and $u_i\in L^2(\Omega)$, we can fix from now on some $n\in \N$ large such that $(A-J_\delta) \, n+\|u_i\|_{L^2(\R^N)} \, \|j_{\delta}\|_{L^2(\R^N)}\leq 0$, and therefore
\begin{equation}\label{estimate-above-1}
\cE_{k_\delta}(u_i,g(v_{in})) \leq \int_\Omega |u_{3 - i}(x)|^q \, |u_i(x)|^{q-1} \, g(v_{in}(x)) \ dx,\quad i = 1,2.
\end{equation}
Let $p \in [2_s,\infty)$ be such that $u_1,u_2 \in L^p(\Omega)$, and observe that for $x\in\{\, g(v_{in})>0 \,\}$ we have 
$$
|u_i(x)|^{q - 1}=u_i(x)^{q-1}  = (n + v_{in})^{q - 1} \le C_q \, (n^{q - 1} + v_{in}^{q - 1})
$$
for a suitable constant $C_q$. Hence, from~\eqref{estimate-above-1} we get
\begin{equation}\label{estimate-above-2}
 \cE_{k_\delta}(u_i,g(v_{in})) \leq C_q \, \|u_{3 - i}\|_p^q \, \Big( n^{q - 1} \, \|g(v_{in})\|_\kappa+\|v_{in}^{q - 1} \, g(v_{in})\|_\kappa \Big),
\end{equation}
where $\kappa = \frac{p}{\, p-q \,} \in (1,2)$ is the conjugate exponent to $\frac{p}{\, q \,}$.
Here and in the following, for $\beta\geq 1$ we let $\|\cdot\|_{\beta}=\|\cdot\|_{L^{\beta}(\Omega)}$.

\bigskip

\goodbreak\refstepcounter{n}\label{below}\textit{Step \arabic{n}: Energy estimate from below.} To estimate $\cE_{k_{\delta}}(u_i,g(v_{in}))$ from below, note that we have with Lemma \ref{vk-properties}, \eqref{estimate-below-start}, and with the Sobolev embedding $\cD_{k_\delta}(\Omega) \hookrightarrow L^{2_s}(\Omega)$
$$
\cE_{k_{\delta}}(u_i,g(v_{in}))=\cE_{k_{\delta}}(u_i-n, \, g(v_{in}))\geq \cE_{k_{\delta}}(v_{in},g(v_{in}))\geq \cE_{k_{\delta}}(G(v_{in}),G(v_{in}))\geq\varepsilon_0 \, \|G(v_{in})\|_{2_s}^2,
$$
where $\varepsilon_0 > 0$ is a suitable constant.

\bigskip

\goodbreak\refstepcounter{n}\textit{Step \arabic{n}: We show that $u_1,u_2 \in L^p(\Omega)$ for all $p \in [1,\infty)$.} Combining the above inequality with \eqref{estimate-above-2}, we have
 \[
 \|G(v_{in})\|_{2_s}^2\leq C \, \|u_{3 - i}\|_p^q \, \Big(\|g(v_{in})\|_\kappa+\|v_{in}^{q-1}g(v_{in})\|_\kappa\Big),
 \]
 where $C$ depends on $n$, $\Omega$, $k$, $\delta$, $q$. Hence, with the monotone convergence theorem, we have for $L\to \infty$ and for every $r\geq 2$
\[
 \frac{4}{\, r^2 \,} \, \|v_{in}\|_{2_s \frac{\, r \,}2}^r\leq \frac{C}{\, r-1 \,} \,  \|u_{3 - i}\|_p^q \left(\|v_{in}\|_{(r-1) \, \kappa}^{r-1}+\|v_{in}\|_{(r+q-2) \, \kappa}^{r+q-2}\right) \!.
\]
Here and in the sequel it is understood that the norms of $v_{in}$ may attain the value~$\infty$. However if the right-hand side is finite, then the left-hand side is also finite, and the inequalities hold. Furthermore, by H\"older's inequality the $L^\beta$-norm in the bounded domain~$\Omega$ dominates the norm in $L^\alpha(\Omega)$ for $\alpha \in [1,\beta]$ in the sense that
\begin{equation}\label{dominates}
\|f\|_\alpha
\le
|\Omega|^\frac{\, \beta - \alpha \,}{\alpha\beta}
\,
\|f\|_\beta
.
\end{equation}
Using~\eqref{dominates} with $f = v_{in}$, $\alpha = (r - 1) \, \kappa$ and $\beta = (r + q - 2) \, \kappa$ we get
$$
\|v_{in}\|_{(r-1) \, \kappa}
\le
|\Omega|^\frac{q - 1}{\, (r - 1) (r + q - 2) \kappa \,}
\,
\|v_{in}\|_{(r+q-2) \, \kappa}
=
|\Omega|^\frac{(q - 1)(p - q)}{\, (r - 1) (r + q - 2) p \,}
\,
\|v_{in}\|_{(r+q-2) \, \kappa}
.
$$
In view of the subsequent application, it is relevant that the coefficient $|\Omega|^\frac{(q - 1)(p - q)}{\, (r - 1) (r + q - 2) p \,}$ keeps bounded when $p,r \to \infty$. Thus, by suitably modifying the constant~$C$ introduced before we may write
\begin{align*}
\frac4{\, r^2 \,} \, \|v_{in}\|_{2_s \frac{\, r \,}2}^r&\leq \frac{C}{\, r-1 \,} \, \|u_{3 - i}\|_p^q \left(\|v_{in}\|_{(r+q-2) \, \kappa}^{r-1}+\|v_{in}\|_{(r+q-2) \, \kappa}^{r+q-2}\right)
\\
\noalign{\medskip}
&\leq \frac{2C}{\, r-1 \,} \, \|u_{3 - i}\|_p^q \, \max \left\{1,\ \|v_{in}\|_{(r+q-2) \, \kappa}^{r+q-2}\right\}
\!.
\end{align*}
The last inequality follows from the fact that $r - 1 \le r + q - 2$. Hence
\begin{align}
\|v_{in}\|_{2_s \frac{\, r \,}2}&\leq \Big(\frac{\, C \,}2 \, \frac{r^2}{\, r-1 \,} \Big)^{\!\!\frac{1}{\, r \,}} \, \|u_{3 - i}\|_p^\frac{q}r \, \max\Big\{ 1,\ \|v_{in}\|_{(r+q-2) \, \kappa}^{1+\frac{q-2}{r}} \Big\}\notag\\
\noalign{\smallskip}
&\leq (Cr)^\frac1r \, \|u_{3 - i}\|_p^\frac{\, q \,}r \, \max\Big\{ 1,\ \|v_{in}\|_{(r+q-2) \, \kappa}^{1+\frac{q-2}{r}} \Big\}
\notag\\
\noalign{\smallskip}
&\leq C' \, \|u_{3 - i}\|_p^\frac{\, q \,}r \, \max\Big\{ 1,\ \|v_{in}\|_{(r+q-2) \, \kappa}^{1+\frac{q-2}{r}} \Big\}, \label{iteration-start-short}
\end{align}
where $C'$ has the same dependencies as $C$ by using that $(Cr)^\frac1r \to 1$ for $r\to \infty$, so that we can bound this quantity independently of $r\in[2,\infty)$. Notice that $v_{in} \le u_i^+ \le n + v_{in}$ in~$\Omega$, hence $u_i^+$~belongs to some $L^p(\Omega)$ if and only if $v_{in}$ does. To manage with
$u_1^-,u_2^-$, note that assumption~\eqref{eq:moser-iteration0} continues to hold if we replace $u_i$ with $-u_i$. Hence, following the Steps \ref{above}, \ref{below} and the above argumentation with $w_{in}=(-u_i-n)^+$ in place of $u_i$ for $i=1,2$ and $n$ as above, we also find
 \begin{equation}
\|w_{in}\|_{2_s \frac{\, r \,}2}\leq C' \, \|u_{3 - i}\|_p^\frac{\, q \,}r \, \max\Big\{ 1,\ \|w_{in}\|_{(r+q-2) \, \kappa}^{1+\frac{q-2}{r}} \Big\}. \label{iteration-start-short2}
 \end{equation}
Clearly, for $p\in[1,\infty]$ we have $u_i\in L^p(\Omega)$ if and only if $w_{in},v_{in}\in L^p(\Omega)$. In order to use~\eqref{iteration-start-short} and \eqref{iteration-start-short2}  iteratively (for both $i = 1$ and $i = 2$), we start from $p_0 = 2_s$, $\kappa_0 = \frac{2_s}{\, 2_s - q \,}$ and $r_0$ such that
 \[
 (r_0+q-2) \, \kappa_0 = 2_s,\quad\text{i.e. $r_0:=2_s-2q+2>2$.}
 \]
Moreover, for $m\in \N$ we define $p_m = 2_s \, \frac{\, r_{m - 1} \,}2$, $\kappa_m = \frac{p_m}{\, p_m - q \,}$ and we let $r_m$ be obtained from $r_{m - 1}$ through the equality $2_s \, \frac{\, r_{m - 1} \,}2 = (r_m + q - 2) \, \kappa_m$. In other terms, we define $\nu = \frac{\, 2_s \,}2$ and
$$
r_m:= \nu \, r_{m-1}-2\,(q-1) = 2 \, \nu^{m+1} - 2 \, (q - 1) \sum_{k=0}^m \nu^k
=2\,\nu^{m+1} \, \frac{\, \nu - q \,}{\, \nu - 1 \,}
+
2 \, \frac{\, q - 1 \,}{\, \nu - 1 \,}.
$$
Since $1 \le q < \nu$ by assumption, it follows that $r_m \nearrow \infty$ for $m\to \infty$ and therefore $u_1,u_2 \in L^p$ for every $p \in [1,\infty)$. Indeed, using the notation introduced above, and letting $r = r_m$ in~\eqref{iteration-start-short} we obtain
\begin{align*}
\|v_{in}\|_{p_{m+1}} &\leq C' \, \|u_{3 - i}\|_{p_m}^\frac{\, q \,}{\, r_m \,} \, \max\Big\{ 1,\ \|v_{in}\|_{p_m}^{1+\frac{q-2}{r_m}} \Big\}\quad\text{for $i=1,2$, $m\in \N$ and}\\
\|w_{in}\|_{p_{m+1}} &\leq C' \, \|u_{3 - i}\|_{p_m}^\frac{\, q \,}{\, r_m \,} \, \max\Big\{ 1,\ \|w_{in}\|_{p_m}^{1+\frac{q-2}{r_m}} \Big\}\quad\text{for $i=1,2$, $m\in \N$}
\end{align*}
and we may inductively apply the inequalities above together with~\eqref{dominates} to prove $v_{in},w_{in}\in L^p(\Omega)$ for all $p\in[1,\infty)$, but then $u_1,u_2\in L^p(\Omega)$ for $p\in[1,\infty)$ as claimed.

\bigskip

\refstepcounter{n}\textit{Step \arabic{n}: Conclusion.} To show that indeed we have $u_1,u_2\in L^{\infty}(\Omega)$, we fix $i\in\{1,2\}$ and $p_0 > \frac{N}{\, 2s \,}$. Since the product $f_{i}:=|u_{3-i}|^q \, |u_i|^{q-1}$ belongs to $L^{p_0}(\Omega)$, from \eqref{estimate-above-1} we get in place of~\eqref{estimate-above-2} the estimate
$$
\cE_{k_\delta}(u_i,g(v_{in})) \leq \|f_{i}\|_{p_0} \, \|g(v_{in})\|_{p'_0}
,
$$
where $p'_0=\frac{p_0}{p_0-1}$ and $v_{in}=(u_i-n)^+$ as above. Using again Step~\ref{below} and the subsequent argument, we arrive at
\begin{equation}\label{iteration-start-short-b1}
\|v_{in}\|_{2_s\frac{r}{2}}\leq (C \, \|f_{i}\|_{p_0} \, r)^\frac1{r} \, \|v_{in}\|_{(r-1) \, p'_0}^{1 - \frac1r} \leq (C \, \|f_{i}\|_{p_0} \, r)^\frac1{r} \, \max\{\, 1,\ \|v_{in}\|_{(r-1) \, p'_0} \,\},
\end{equation}
where $C$ depends on $n$, $\Omega$, $k$, and $\delta$. In order to use~\eqref{iteration-start-short-b1} iteratively, we define $r_0 = 2$, $\nu = \frac{2_s}{\, 2 p'_0 \,} > 1$ and
\begin{equation}\label{geometric}
r_m := \nu r_{m-1} + 1 = \frac{\, 2\nu - 1 \,}{\nu - 1} \, \nu^m -\frac1{\nu - 1}\quad\text{for $m \in \mathbb N$}.
\end{equation}
Note that $\alpha_m := 2_s \, \frac{\, r_m \,}2 = (r_{m+1} - 1) \, p'_0$ for $m \in \mathbb N_0$, and $r_m\nearrow \infty$ for $m\to \infty$. Letting $r = r_{m+1}$ and $M \geq C \, \|f_{i}\|_{p_{0}}$ in~\eqref{iteration-start-short-b1} we get $\|v_{in}\|_{r_{m+1}} \leq (Mr_{m+1})^\frac1{r_{m+1}} \, a_m$, where $a_m =\max\{\, 1,\ \|v_{in}\|_{\alpha_m} \,\} \geq 1$. Without loss of generality we take $M\ge\frac12$, so that $Mr_{m+1} \ge Mr_0 \ge 1$, and therefore we may write $a_{m+1} \leq \max\Big\{\, 1,\, (Mr_{m+1})^\frac1{r_{m+1}} \, a_m \,\Big\} = (Mr_{m+1})^\frac1{r_{m+1}} \, a_m$ for $m \in \mathbb N_0$. Thus, by induction we obtain
$$
a_m \leq a_0 \, \prod_{j = 1}^m (Mr_j)^\frac1{r_j}
\leq a_0 \, \prod_{j = 1}^\infty (Mr_j)^\frac1{r_j}
.
$$
Using~\eqref{geometric}, it is readily seen that the infinite product in the right-hand side converges to a (finite) limit, hence we have
\[
\|v_{in}\|_{\infty}=\lim_{m\to\infty}\|v_{in}\|_{\alpha_m}<\infty.
\]
Since the argument above also holds with $w_{in}=(-u_i-n)^+$ in place of $v_{in}$ we conclude that $u_1,u_2\in L^{\infty}(\Omega)$.
\end{proof}

\begin{cor}\label{boundedness}
Let $u_1,u_2\in \cD_k(\Omega)$ satisfy the system \eqref{eq:system-gradient} for some $a_1,a_2\in L^{\infty}(\Omega)$ and\/ $q$ such that $1 \le q < \frac{N}{\, N - 2s \,}$. Then $u_1,u_2\in L^\infty(\Omega)$.
\end{cor}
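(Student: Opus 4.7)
The plan is to derive the corollary as a direct consequence of Lemma~\ref{boundedness0} by translating the specific system~\eqref{eq:system-gradient} into the abstract hypothesis~\eqref{eq:moser-iteration0}. All the real work (Moser iteration, kernel truncation, Sobolev embedding) has already been done in the lemma, so the corollary is essentially a bookkeeping argument.

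First, I would set $A := \max\{\|a_1\|_{L^\infty(\Omega)}, \|a_2\|_{L^\infty(\Omega)}\}$, which is finite by hypothesis. Since $(u_1,u_2)$ is a weak solution of~\eqref{eq:system-gradient}, testing each equation against an arbitrary nonnegative $\phi \in \cD_k(\Omega)$ gives
\[
\cE_k(u_i,\phi) = \int_\Omega a_i(x) \, u_i(x) \, \phi(x)\, dx + \int_\Omega |u_{3-i}(x)|^q \, |u_i(x)|^{q-2} \, u_i(x) \, \phi(x)\, dx, \qquad i=1,2.
\]
Applying the triangle inequality to the right-hand side, using $|a_i(x)| \le A$ almost everywhere, and the elementary identity $\bigl| |u_{3-i}|^q |u_i|^{q-2} u_i \bigr| = |u_{3-i}|^q \, |u_i|^{q-1}$, yields
\[
|\cE_k(u_i,\phi)| \le \int_\Omega \bigl( A \, |u_i| + |u_{3-i}|^q \, |u_i|^{q-1} \bigr) \, \phi \, dx, \qquad i=1,2.
\]
This is exactly the hypothesis~\eqref{eq:moser-iteration0} of Lemma~\ref{boundedness0}, with the same exponent $q$ and the same constant $A$. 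The growth condition $1 \le q < \frac{N}{N-2s} = \frac{2_s}{2}$ required by the lemma is precisely the assumption on $q$ in the corollary.

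Invoking Lemma~\ref{boundedness0} then immediately delivers $u_1, u_2 \in L^\infty(\Omega)$, completing the proof. No significant obstacle is to be expected: the only thing to verify is that the nonlinearity $|u_{3-i}|^q|u_i|^{q-2}u_i$ is pointwise dominated in absolute value by the expression $|u_{3-i}|^q |u_i|^{q-1}$ appearing in~\eqref{eq:moser-iteration0}, which is straightforward, and that the linear term $a_i u_i$ is absorbed into the $A|u_i|$ term.
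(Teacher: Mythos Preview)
Your proposal is correct and follows essentially the same approach as the paper: define $A := \max\{\|a_1\|_{L^\infty(\Omega)}, \|a_2\|_{L^\infty(\Omega)}\}$, use the weak formulation of~\eqref{eq:system-gradient} together with the triangle inequality and the pointwise bound $\bigl||u_{3-i}|^q|u_i|^{q-2}u_i\bigr| = |u_{3-i}|^q|u_i|^{q-1}$ to verify~\eqref{eq:moser-iteration0}, and then invoke Lemma~\ref{boundedness0}. The paper's proof is in fact slightly terser than yours, but the argument is identical.
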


\begin{proof}
	Let $A:=\max\{\|a_i\|_{L^{\infty}(\Omega)}\;:\; i=1,2\}$, then for $i=1,2$ and any $\phi\in \cD_k(\Omega)$, $\phi\geq 0$ we have
	\[
	\left|\cE_k(u_i,\phi)\right|\leq \int_\Omega \Big(c \, |u_i| + |u_{3-i}|^q \, |u_i|^{q-1} \Big) \, \phi\ dx,
	\]
so the statement follows from Lemma \ref{boundedness0}.
\end{proof}

\section*{Acknowledgements}\pdfbookmark[1]{Acknowledgements}{Acknowledgements} The first author is a member of the Grup\-po Na\-zio\-na\-le per l'A\-na\-li\-si Ma\-te\-ma\-ti\-ca, la Pro\-ba\-bi\-li\-t\`a e le lo\-ro Ap\-pli\-ca\-zio\-ni (GNAMPA) of the I\-sti\-tu\-to Na\-zio\-na\-le di Al\-ta Ma\-te\-ma\-ti\-ca (INdAM). This work is partially supported by the research project \textit{Integro-differential Equations and Non-Local Problems}, funded by Fon\-da\-zio\-ne di Sar\-de\-gna (2017). We thank Tobias Weth for the discussions concerning the truncation of kernels used in the proof of Lemma \ref{boundedness0}.

\bibliographystyle{amsplain}

\end{document}